\documentclass[letterpaper,11pt,reqno]{amsart}
\usepackage[portrait,margin=1in]{geometry}
\usepackage[T1]{fontenc}
\usepackage{mathrsfs,xfrac} 
\usepackage[colorlinks=true,linkcolor=blue,citecolor=blue,urlcolor=blue]{hyperref} 
\usepackage{amsmath,amssymb,amsthm,amsfonts,amsbsy,latexsym,dsfont,color,graphicx,enumitem,microtype}
\usepackage[foot]{amsaddr}
\usepackage{caption}
\usepackage{regexpatch}
\usepackage{todonotes}
\usepackage{comment}
\makeatletter
\xpatchcmd{\@todo}{\setkeys{todonotes}{#1}}{\setkeys{todonotes}{inline,#1}}{}{}
\makeatother

\newenvironment{enumeratei}{\begin{enumerate}[label=\textup{\roman*.},leftmargin=2em,itemsep=1ex]}{\end{enumerate}}
\newenvironment{enumeratea}{\begin{enumerate}[label=\textup{\alph*)},leftmargin=2em,itemsep=1ex]}{\end{enumerate}}

\newtheorem{thm}{Theorem}[section]
\newtheorem{lem}[thm]{Lemma}

\newtheorem{prop}[thm]{Proposition}

\newtheorem{rem}[thm]{Remark}

\renewcommand{\le}{\leqslant}  
\renewcommand{\ge}{\geqslant} 
\renewcommand{\leq}{\leqslant}  
\renewcommand{\geq}{\geqslant}

\newcommand{\wt}{\widetilde}

\newcommand{\ind}{\mathds{1}}
\newcommand{\eps}{\varepsilon}

\newcommand{\norm}[1]{\left\Vert#1\right\Vert}
\newcommand{\abs}[1]{\left\vert#1\right\vert}

\renewcommand{\iff}{\Leftrightarrow}

    \let\gk=\kappa

\newcommand{\cF}{\mathcal{F}}
\newcommand{\cG}{\mathcal{G}}\newcommand{\cI}{\mathcal{I}}
\newcommand{\cJ}{\mathcal{J}}\newcommand{\cK}{\mathcal{K}}\newcommand{\cL}{\mathcal{L}}
\newcommand{\cO}{\mathcal{O}}
\newcommand{\cP}{\mathcal{P}}
\newcommand{\cS}{\mathcal{S}}\newcommand{\cT}{\mathcal{T}}
\newcommand{\cV}{\mathcal{V}}\newcommand{\cX}{\mathcal{X}}
\newcommand{\cZ}{\mathcal{Z}}  
\newcommand{\vone}{\mathbf{1}}

\newcommand{\mvg}{\boldsymbol{g}}

\newcommand{\mvn}{\boldsymbol{n}}
\newcommand{\mvq}{\boldsymbol{q}}

\newcommand{\mvgs}{\boldsymbol{\sigma}}\newcommand{\mvgt}{\boldsymbol{\tau}}

\newcommand{\dR}{\mathds{R}}

\newcommand{\dZ}{\mathds{Z}} 
\newcommand{\sR}{\mathscr{R}}

\DeclareMathOperator{\E}{\mathds{E}}
\DeclareMathOperator{\pr}{\mathds{P}}
\DeclareMathOperator{\N}{N}

\DeclareMathOperator{\var}{Var}

\DeclareMathOperator{\tr}{Tr}

\newcommand{\fR}{\mathfrak{R}}
\DeclareMathOperator{\bal}{bal}
\allowdisplaybreaks[2]

\begin{document}

\title[Potts Spin Glass Model]{Fluctuations in the Potts spin glass model at high temperature}
\author[Dey]{Partha S.~Dey$^{\star}$}
\author[Kang]{Taegu Kang$^{\dagger}$}
\address{$^{\star,\dagger}$Department of Mathematics, University of Illinois Urbana-Champaign}
\email{\{$^\star$psdey,$^\dagger$taeguk2\}@illinois.edu}
\date{}
\subjclass[2020]{Primary: 60F05, 82B44; Secondary: 82D30.}
\keywords{Spin glass, Potts model, Cluster expansion, Central limit theorem, Fourth-moment theorem.}

\begin{abstract}
	We study the fluctuations of the free energy of the Potts spin glass with $\gk$ colors at high temperature. The disorder is symmetric with unit variance and finite fourth moment. For configurations with a balanced, or nearly balanced, color profile, the log-partition function satisfies a central limit theorem whenever $\beta^2<\alpha_c(\gk)$. Here $\gk\alpha_c(\gk)$ is the critical inverse temperature of the Curie--Weiss--Potts model. A comparison of color profiles transfers this limit to the unconstrained model for $\beta<\beta_{\max}(\gk)$, an explicit threshold. In the same range, the magnetizations of two replicas and their overlap matrix satisfy a joint quenched central limit theorem with independent Gaussian limits. The fourth cumulant of the disorder enters the limiting mean and variance explicitly.
\end{abstract}

\maketitle
%\setcounter{tocdepth}{1}{\setlength{\parskip}{0pt}\tableofcontents}

%%%%%%%%%%%%%%%%%%%%%%%%%%%%%%%%%%%%%%%%%%%%%%%%%%%%%%%%%%%%%%%%%%%%%%%%%%%%%%%
\section{Introduction}
%%%%%%%%%%%%%%%%%%%%%%%%%%%%%%%%%%%%%%%%%%%%%%%%%%%%%%%%%%%%%%%%%%%%%%%%%%%%%%%

\subsection{Model and motivation}\label{sec:model}
The Potts spin glass is the $\gk$-state generalization of the Sherrington--Kirkpatrick (SK) model. It was introduced by Elderfield and Sherrington~\cite{ES83} to describe disordered magnets whose local state takes $\gk$ values, and subsequently studied by Gross, Kanter and Sompolinsky~\cite{GKS85}. It is also a mean-field model for the structural glass transition~\cite{KW87}. For $\gk\geq3$, its phase diagram differs qualitatively from that of the SK model. For instance, the replica analysis predicts a discontinuous spin-glass transition for $\gk>4$ in the color-symmetric setting~\cite{GKS85}. The model also arises in combinatorial optimization and statistical inference. The ground-state energy of the balanced Gaussian model gives the leading correction to the maximum $\gk$-cut of Erd\H{o}s--R\'enyi and random regular graphs in the large-degree limit~\cite{Sen18}; see~\cite{DMS17} for the SK case. Generalized Potts models also describe posterior distributions of community labels in stochastic block models~\cite{DKMZ11}. We refer to~\cite{Wu82} for background on the Potts model.

Fix the number of colors $\gk\ge 2$, and let $\Sigma_N := [\gk]^N$ be the space of color configurations $\mvgs = (\sigma_i)_{i \in [N]}$. The Hamiltonian of the Potts spin glass is
\begin{align}
	H_N (\mvgs) := \frac{\beta}{\sqrt{N}} \sum_{i < j } J_{ij}  A_{ij}(\mvgs),
	\quad\text{where }
	A_{ij}(\mvgs) := \gk \cdot \ind \{ \sigma_i = \sigma_j\} - 1.
	\label{eq:hamiltonian}
\end{align}
The couplings $(J_{ij})_{i<j}$ are i.i.d.~copies of a symmetric random variable $J$ with $\E(J^2)=1$ and $\E(J^4)<\infty$. The \emph{edge weight} $A_{ij}(\mvgs) \in \{-1, \gk-1\}$ records whether the endpoints of the edge $\{i,j\}$ carry the same color. It is centered under the uniform measure on $\Sigma_N$. The partition function and the Gibbs measure at inverse temperature $\beta>0$ are
\begin{align}
	Z_N(\beta):=\sum_{\mvgs\in\Sigma_N}e^{H_N(\mvgs)}
	\quad\text{and}\quad
	\varrho_{N,\beta}(\mvgs):=\frac{e^{H_N(\mvgs)}}{Z_N(\beta)}.\nonumber
\end{align}
We write $Z_N(\beta;J)$ when the dependence on the disorder matters. Gibbs averages are denoted by $\langle\cdot\rangle_{\varrho_{N,\beta}}$, and $\varrho_{N,\beta}^{\otimes2}$ is the law of two independent replicas. The dependence on the fixed $\beta$ is often suppressed.

Three features of~\eqref{eq:hamiltonian} deserve comment.
\begin{enumeratea}
	\item \emph{Vector spins.} With the field $\phi$ defined in~\eqref{def:phi} below, $A_{ij}(\mvgs)=\phi(\sigma_i)^{\intercal}\phi(\sigma_j)$ by Lemma~\ref{lem:edge-id}. Thus $H_N$ is the SK Hamiltonian for the vector spins $\phi(\sigma_i)$, which point to the vertices of a regular simplex. This is the vector-spin point of view of~\cite{Pan18}. For $\gk=2$, identifying the two colors with $\pm1$ gives $A_{ij}(\mvgs)=\sigma_i\sigma_j$, and $H_N$ is the SK Hamiltonian.
	\item \emph{Centering.} Replacing $A_{ij}$ by $\gk\ind\{\sigma_i=\sigma_j\}$ does not change the Gibbs measure. It adds the configuration-independent term $\beta N^{-1/2}\sum_{i<j}J_{ij}$ to $\ln Z_N(\beta)$, and this term has variance $\beta^2(N-1)/2$. The centering removes this trivial source of fluctuations, which is essential for order-one fluctuation results. The same centering, $\ind\{\sigma_i=\sigma_j\}-\gk^{-1}=A_{ij}(\mvgs)/\gk$, is used in~\cite{Kim26}, where it is also shown to be needed for the second-moment method.
	\item \emph{General disorder.} Under mild moment assumptions, the limiting free energy of the SK model does not depend on the law of the couplings~\cite{CH06}, but its fluctuations do. In our results the fourth cumulant $\mu_4:=\E(J^4)-3$ of the disorder enters the limiting mean and variance.
\end{enumeratea}

In this article we study the fluctuations of the log-partition function $\ln Z_N(\beta)$ in the high temperature regime. A central role is played by the color profile. Write $n(\mvgs)=\bigl(\abs{\{i:\sigma_i=x\}}\bigr)_{x\in[\gk]}$ for the color count vector. For a profile $\mvn\in\dZ_{\geq0}^{\gk}$ with $\sum_x n_x=N$, we define
\begin{align}\label{def:profile}
	\Sigma_N^{(\mvn)}:=\{\mvgs\in\Sigma_N:n(\mvgs)=\mvn\}
	\quad\text{and}\quad
	Z_N^{(\mvn)}(\beta):=\sum_{\mvgs\in\Sigma_N^{(\mvn)}}e^{H_N(\mvgs)}.
\end{align}

Let $u:=\gk^{-1}\vone$ be the uniform color law, $e_x$ the $x$-th coordinate vector, and $I$ the identity matrix. Define
\begin{align}\label{def:phi}
	\phi(x):=\sqrt{\gk}\,(e_x-u),\quad
	\Pi:=I-\gk^{-1}\vone\vone^{\intercal},
	\quad\text{and}\quad
	\cV:=\Big\{v\in\dR^\gk:\textstyle\sum_xv_x=0\Big\}.
\end{align}
The field $\phi$ takes values in $\cV$, and $\Pi$ is the orthogonal projection onto $\cV$. The operator $\Pi\otimes\Pi$ acts on $\gk\times\gk$ matrices by $s\mapsto\Pi s\Pi$. It is the orthogonal projection onto $\cV\otimes\cV$ and has rank $(\gk-1)^2$.
We write the color profile and the centered magnetization as
\begin{align}
	p(\mvgs):=\frac{n(\mvgs)}N
	\quad\text{and}\quad
    m(\mvgs):=\frac1{\sqrt N}\sum_{i=1}^N\phi(\sigma_i)=\sqrt{\gk N}\,\big(p(\mvgs)-u\big).
	\label{eq:profile-mag}
\end{align}
The magnetization is constant on $\Sigma_N^{(\mvn)}$, with value
$$
    m_{\mvn}:=\sqrt{\gk/N}\,\big(\mvn-(N/\gk)\vone\big).
$$
We fix a most balanced profile $\mvn^*=\mvn^*(N)$, whose counts are all equal to $\lfloor N/\gk\rfloor$ or $\lceil N/\gk\rceil$. Then $\norm{m_{\mvn^*}}_2\le \gk N^{-1/2}$. When $\gk\mid N$, we write
\begin{align*}
    \Sigma_N^{\bal}:=\Sigma_N^{(\mvn^*)}
    \quad\text{and}\quad
    Z_N^{\bal}:=Z_N^{(\mvn^*)},
\end{align*}
and $\varrho_N^{\bal}$ for the Gibbs measure conditioned on $\Sigma_N^{\bal}$.
For two configurations $\mvgs,\mvgt\in\Sigma_N$, the pair empirical measure $p(\mvgs,\mvgt)$ has entries
\[
p_{xy}(\mvgs,\mvgt):=N^{-1}\abs{\{i:\sigma_i=x,\ \tau_i=y\}}, \quad x,y\in[\gk].
\]
The number of arguments distinguishes it from the color profile $p(\mvgs)$. The \emph{scaled overlap matrix} is
\begin{align}
	\sR(\mvgs,\mvgt):=\frac1{\sqrt N}\sum_{i=1}^N\phi(\sigma_i)\phi(\tau_i)^{\intercal}
		=\gk\sqrt N\,\Pi p(\mvgs,\mvgt)\Pi.
	\label{eq:pair-profile}
\end{align}

The second equality holds because $\sum_{x,y}q_{xy}\phi(x)\phi(y)^{\intercal}=\gk\,\Pi q\Pi$ for every $\gk\times\gk$ matrix $q$. Both $m$ and $\sR$ are centered fluctuation fields on the $\sqrt N$ scale. 
Finally, let $E_N$ be the edge set of the complete graph on $[N]$. For $e=\{i,j\}\in E_N$ we write $A_e(\mvgs):=A_{ij}(\mvgs)$ and $J_e:=J_{ij}$. A \emph{loop} of length $n\geq3$ is a simple cycle with $n$ edges in this graph, regarded as a set of edges.

\subsection{Main results}\label{sec:main-results}
Define
\begin{align}\label{def:alpha}
    \alpha:=\frac{(\gk-2)\beta^2}{2}.
\end{align}
The parameter $\alpha$ measures how strongly the disorder couples to the color profile. By Lemma~\ref{lem:general-kernels} below, the disorder average of $e^{H_N(\mvgs)}$ (for the truncated disorder) is proportional to $e^{\alpha\norm{m(\mvgs)}_2^2/2}$, up to a factor $1+o(1)$, for configurations with bounded magnetization. In particular $\alpha=0$ for $\gk=2$.

The thresholds are expressed through the optimal sub-Gaussian constant $\eta_\phi(\gk)$ of $\phi(\sigma)$ with $\sigma$ uniform on $[\gk]$. By Lemma~\ref{lem:eta-phi},
\begin{align}\label{def:eta-phi}
	\eta_\phi(2)=1
	\quad\text{and}\quad
	\eta_\phi(\gk)=\frac{\gk(\gk-2)}{2(\gk-1)\ln(\gk-1)}\quad\text{for }\gk\geq3.
\end{align}
We define
\begin{align}\label{def:betamax}
	\alpha_c(\gk):=\eta_\phi(\gk)^{-1}
	\quad\text{and}\quad
	\beta_{\max}(\gk):=\sqrt{\alpha_c(\gk)}\,
	\min\left(1,\sqrt{\frac2{\gk-2}}\right),
\end{align}
where the second entry of the minimum is $+\infty$ for $\gk=2$.
For $\gk\geq3$, the number $\gk\alpha_c(\gk)=2(\gk-1)\ln(\gk-1)/(\gk-2)$ is the critical inverse temperature of the Curie--Weiss--Potts model~\cite{EW90}. We have $\alpha_c(2)=1$ and $\alpha_c(\gk)<1$ for $\gk\geq3$. The condition $\beta<\beta_{\max}(\gk)$ is equivalent to the two conditions $\beta^2<\alpha_c(\gk)$ and $\alpha<\alpha_c(\gk)$. The first controls the overlap and the second controls the color profile, see Remark~\ref{rem:temperature-range}.

For $0\leq x<1$ we define
\begin{align}
	\varphi(x)&:=-\tfrac12\big(x+\ln(1-x)\big)=\sum_{n\geq2}\frac{x^n}{2n},
	\quad
	\varphi_\gk(x):=(\gk-1)^2\varphi(x),
	\quad\text{and} \label{eq:varphi}\\
	\varphi_{\gk,J}(x) &:=\varphi_\gk(x)+\frac{\mu_4(\gk-1)^2x^2}{8}
	=\frac{(\gk-1)^2x^2}{8}\big(\E(J^4)-1\big)
		+(\gk-1)^2\sum_{n\geq3}\frac{x^n}{2n}.
	\label{eq:general-variance}
\end{align}
The last expression shows that $\varphi_{\gk,J}\geq0$, since $\E(J^4)\geq\E(J^2)^2=1$. Our first result is a central limit theorem, with fluctuations of order one, for the log-partition function restricted to a near-balanced profile.

\begin{thm}[Balanced and near-balanced free energy]\label{thm:balanced_clt}
	Let $0<\beta^2<\alpha_c(\gk)$ and let $(\mvn_N)$ be a sequence of profiles with $\sup_N\norm{m_{\mvn_N}}_2<\infty$. Then, as $N\to\infty$,
	\begin{align*}
		&\ln Z_N^{(\mvn_N)}(\beta)-\ln\abs{\Sigma_N^{(\mvn_N)}}
			-\frac{\beta^2(\gk-1)}4(N-\gk+1)-\frac\alpha2\norm{m_{\mvn_N}}_2^2\\
		&\quad\Longrightarrow
		\N\left(-\frac12\varphi_\gk(\beta^2)
			+\frac{\mu_4\beta^4(\gk-1)(\gk^2-6\gk+6)}{48},
			\ \varphi_{\gk,J}(\beta^2)\right).
	\end{align*}
\end{thm}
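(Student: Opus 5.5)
We follow the cluster-expansion (Aizenman--Lebowitz--Ruelle) route, adapted to the fixed profile and to general disorder. First we truncate the disorder at a level $N^{\delta}$; the fourth-moment assumption makes the truncated Hamiltonian differ from $H_N$ by a negligible amount in probability. Then we write $e^{\beta J_eA_e/\sqrt N}=c_e(1+w_e(\mvgs))$. Here $c_e=\E_\sigma e^{\beta J_eA_e/\sqrt N}$ is the average under the uniform law on $\Sigma_N^{(\mvn_N)}$, computed to leading order through the edge identity $A_e=\phi(\sigma_i)^\intercal\phi(\sigma_j)$ (Lemma~\ref{lem:edge-id}). Expanding $\prod_e(1+w_e)$ over edge subsets $\Gamma\subset E_N$ gives
\[
\frac{Z_N^{(\mvn_N)}}{\abs{\Sigma_N^{(\mvn_N)}}\prod_e c_e}=\sum_{\Gamma}\E^{(\mvn_N)}\Big[\prod_{e\in\Gamma}w_e\Big].
\]
The deterministic prefactor $\sum_e\ln c_e$ should produce the following pieces:
\begin{itemize}
\item the centering $\frac{\beta^2(\gk-1)}4(N-\gk+1)$, coming from the second moments $\E A_e^2=\gk-1$ together with the finite-profile correction of order $1/N$ in the within-profile pair law;
\item the $\mu_4$-part of the mean, coming from the fourth cumulants $\E J^4\E A_e^4/N^2$, and the third-order term $\E A_e^3=(\gk-1)(\gk-2)$.
\end{itemize}
The term $\frac\alpha2\norm{m_{\mvn_N}}^2$ comes from the same computation, via Lemma~\ref{lem:general-kernels} applied with the magnetization fixed by the profile. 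The $(\gk-2)$-type factors in $\gk^2-6\gk+6$ should be exactly these odd cumulants of $A_e$.

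Next we decompose the random part. Under the profile-conditioned uniform law the spins are nearly independent, so $\E^{(\mvn_N)}\prod_{e\in\Gamma}w_e$ is negligible unless every vertex of $\Gamma$ has degree $\ge2$. The leading contributions are disjoint unions of simple loops. For a loop $\gamma$ of length $n$, the identity $\E\prod_{e\in\gamma}\phi(\sigma_i)^\intercal\phi(\sigma_j)=\tr\big((\E\phi\phi^\intercal)^n\big)=\gk-1$ shows that the loop contributes
\[
(\gk-1)\Big(\frac{\beta}{\sqrt N}\Big)^n\prod_{e\in\gamma}J_e
\]
to leading order. We therefore define the loop statistics $Y_n=N^{-n/2}\sum_{\gamma:|\gamma|=n}\prod_{e\in\gamma}J_e$. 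These converge jointly to independent Gaussians $\N(0,1/(2n))$ for $n\ge3$, by the method of moments or by a fourth-moment theorem for these degenerate $U$-statistics.

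We then show that the cycle contributions exponentiate to a log-normal limit, $\exp\big(\sum_{n\ge3}(\gk-1)\beta^nY_n-\cdots\big)$. The $(\gk-1)^2$ factor appearing in $\varphi_\gk$ should come from the multiplicity $\gk-1$ of the eigenvalue of the $\cV$-valued spins, which enters once per loop at the level of the variance. The $n=2$ term, i.e.\ double edges $\sum_eJ_e^2$, is not a loop; it gives the additional Gaussian component with variance $\frac{(\gk-1)^2\beta^4}{8}(\E J^4-1)$ and accounts for the $x^2$ term of $\varphi_{\gk,J}$. Together these pieces produce the limiting law $\N(-\frac12\varphi_\gk(\beta^2)+\text{const},\ \varphi_{\gk,J}(\beta^2))$.

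The main obstacle is to prove that the non-loop subsets $\Gamma$ contribute $o(1)$ in probability up to $\beta^2<\alpha_c(\gk)$, and not merely for small $\beta$. We plan a second-moment computation of the normalized partition function minus the truncated loop product. This reduces to a two-replica sum over overlap matrices $\sR(\mvgs,\mvgt)$ restricted to the fixed profile. The exponent is asymptotically $\frac{\beta^2}{2}\norm{\sR}_F^2$ plus a large-deviation rate for the pair empirical measure. Both the profile restriction and the centering kill the linear and magnetization modes, so the fluctuations live on $\cV\otimes\cV$. The key input is the sub-Gaussian bound with constant $\eta_\phi(\gk)$ for $\phi(\sigma)\otimes\phi(\tau)$, which follows from Lemma~\ref{lem:eta-phi}. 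It gives uniform exponential integrability whenever $\beta^2\eta_\phi<1$, i.e.\ $\beta^2<\alpha_c(\gk)$. Away from $\sR=O(1)$ we need a global large-deviation estimate, and we would try a Gaussian-tail bound via Hoeffding-type control on $\sR$. Given these bounds, convergence of the second moment to that of the loop product yields $L^2$-closeness and hence the CLT.
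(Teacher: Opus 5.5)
The decisive gap is in your last step. Showing that $\E\big((\cZ_N^{(\mvn_N)})^2\big)$ converges to the second moment of the loop approximant $P$ does not give $L^2$-closeness, because $\E\big((\cZ-P)^2\big)=\E(\cZ^2)-2\E(\cZ P)+\E(P^2)$. The cross term involves one replica together with the disorder-dependent $P$. It is not a two-replica overlap sum, and your plan never computes it.

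In~\cite{ALR87} the cross term comes for free. There $\tanh$ is odd, so distinct edge monomials are orthogonal, and graphs with a leaf have $f(\Gamma)=0$; the loop product is then an orthogonal sub-sum of the expansion. Neither property holds here:
\begin{itemize}
\item Your weights are $w_e=b_e\big(A_e-\E_{\mvgs}^{(\mvn)}(A_e)\big)$ with $b_e\approx\mathrm{Th}_\gk(\beta J_e/\sqrt N)=\beta J_e/\sqrt N+\tfrac{\gk-2}{2}\beta^2J_e^2/N+\cdots$. Hence $\E(b_e)\approx\alpha/N\neq0$ for $\gk\geq3$.
\item Under the profile-uniform law, a two-edge path $\{ij,jk\}$ has centered correlation $\approx-(\gk-1)/N$ rather than $0$. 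Summed over the $\sim N^3/2$ such paths, this gives an order-one deterministic term, about $-(\gk-1)\alpha^2/2$.
\item Likewise, loops acquire means $(\gk-1)\alpha^n/(2n)$. These are exactly the $n\geq3$ terms of the excess $(\gk-1)\varphi(\alpha)$ in Theorem~\ref{thm:full_clt}. They do not appear in Theorem~\ref{thm:balanced_clt}, so non-loop subgraphs must cancel them and cannot be negligible in aggregate.
\end{itemize}

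The paper avoids all of this in two moves. First, it normalizes by the annealed mean, $\cZ_N^{(\mvn)}=Z_N^{(\mvn)}(\beta;\wt J)/\E\big(Z_N^{(\mvn)}(\beta;\wt J)\big)$, so the deterministic part comes entirely from the one-edge expansion~\eqref{eq:general-edge-cumulant}. Second, it evaluates the cross term through the planted law $d\pr_N^{\mathrm{tilt},\mvgs}/d\pr=e_N(\mvgs)$, namely $\E\big(\cZ_N^{(\mvn)}f(\wt G_{N,\ell})\big)=\E_{\mvgs}^{(\mvn)}\big(\E^{\mathrm{tilt},\mvgs}f(\wt G_{N,\ell})\big)$. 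Under the tilt the edges remain independent, with means $\beta A_e(\mvgs)/\sqrt N+\cO(N^{-3/2})$. This shifts $\wt Y_{N,n}$ by $(\gk-1)\beta^n/(2n)$ and $\wt X_N$ by $(\gk-1)\beta^2(\E(J^4)-1)/4$, so $\wt G_{N,\ell}\Longrightarrow\N(v_\ell,v_\ell)$ with $v_\ell=\varphi_{\gk,J,\ell}(\beta^2)$. With the bounded cutoff $f_{\ell,B}$ this gives $\E\big((\cZ_N^{(\mvn)}-f_{\ell,B}(\wt G_{N,\ell}))^2\big)\to e^{\varphi_{\gk,J}(\beta^2)}-e^{v_\ell}$ as $N\to\infty$ and then $B\to\infty$. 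Without some such handle on the joint law of $\cZ$ and the loop statistics, you get neither the $L^2$ approximation nor the lognormal limit.

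Second, the key input you name is false as stated. The sub-Gaussian constant of $\phi(\sigma)\otimes\phi(\tau)$, with $\sigma,\tau$ independent uniform, is not $\eta_\phi(\gk)$. By Lemma~\ref{lem:eta-dual}, with $\E_q(\phi\otimes\phi)=\gk\,\Pi q\Pi$, the product measure $q=p^*\otimes p^*$ already gives the ratio $\frac{(\gk-2)^2}{2(\gk-1)}\eta_\phi(\gk)$. This exceeds $\eta_\phi(\gk)$ for every $\gk\geq5$, so an i.i.d.\ Hubbard--Stratonovich argument as in Lemma~\ref{lem:expmoment} would give a strictly smaller range of $\beta$.

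What is true is the margin-constrained inequality of Lemma~\ref{lem:near-uniform}. It follows by applying~\eqref{eq:row-dual} to each conditional row $q_{x\cdot}/p_x$ and averaging with the chain rule. Because it requires margins fixed near $u$, it must be combined with the method of types over pair types with those margins. The sharp local bound~\eqref{eq:profile-types-sharp} on the core cancels the Gaussian lattice sum, and the crude bound~\eqref{eq:profile-types} handles the remaining types. A Hoeffding-type tail bound for $\sR$ cannot replace this. The energy $\frac{\beta^2\gk^2N}{2}\norm{\hat q}_F^2$ competes with the entropy at every scale of $\hat q$, so the optimal constant is needed globally, not only near $\hat q=0$.

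Finally, a fixed truncation level $N^\delta$ does not work under four moments. For $\delta<\tfrac12$, the discarded part of $H_N(\mvgs)$ has variance of order $N\,\E\big(J^2\ind\{\abs{J}>N^\delta\}\big)$, which need not vanish. For $\delta>\tfrac12$, the expansion~\eqref{eq:general-edge-cumulant} of $\ln\E e^{z\wt J}$ breaks down. The paper instead truncates at $\eps_N\sqrt N$ with $\eps_N\to0$ slowly.
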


Combining Theorem~\ref{thm:balanced_clt} with the concentration of the color profile, we obtain the same type of limit for the unconstrained model.
\begin{thm}[Unconstrained free energy]\label{thm:full_clt}
	Let $0<\beta<\beta_{\max}(\gk)$. Then, as $N\to\infty$,
	\begin{align*}
		&\ln Z_N(\beta)-N\ln\gk-\frac{\beta^2(\gk-1)}4(N-1)\\
		&\quad\Longrightarrow
		\N\left((\gk-1)\varphi(\alpha)-\frac12\varphi_\gk(\beta^2)
			+\frac{\mu_4\beta^4(\gk-1)(\gk^2-6\gk+6)}{48},
			\ \varphi_{\gk,J}(\beta^2)\right).
	\end{align*}
\end{thm}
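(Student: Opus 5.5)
We deduce Theorem~\ref{thm:full_clt} from Theorem~\ref{thm:balanced_clt} by decomposing over color profiles,
\[
Z_N(\beta)=\sum_{\mvn}Z_N^{(\mvn)}(\beta),
\]
and writing each summand relative to the most balanced profile $\mvn^*$. Set
\[
W_N^{(\mvn)}:=Z_N^{(\mvn)}\Big/\Big(\abs{\Sigma_N^{(\mvn)}}\,e^{\beta^2(\gk-1)(N-\gk+1)/4}\Big).
\]
Theorem~\ref{thm:balanced_clt} says that $\ln W_N^{(\mvn)}-\frac\alpha2\norm{m_{\mvn}}_2^2$ converges to the same Gaussian limit $G$ for every bounded-magnetization sequence. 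The first step is to strengthen this to a statement that is uniform in $\mvn$ with $\norm{m_{\mvn}}_2\le K$. The natural route is to show that the difference
\[
\ln W_N^{(\mvn)}-\ln W_N^{(\mvn^*)}-\tfrac\alpha2\big(\norm{m_{\mvn}}_2^2-\norm{m_{\mvn^*}}_2^2\big)
\]
tends to $0$ in probability, uniformly over such $\mvn$. The proof of Theorem~\ref{thm:balanced_clt} presumably goes through a second-moment / cluster-expansion representation $W_N^{(\mvn)}\approx e^{\alpha\norm{m_{\mvn}}^2/2}\exp\big(\sum_{\text{loops}}\ldots\big)$. In that representation the loop sums (the source of the fluctuation) depend on $\mvn$ only through $o(1)$ corrections when $m_{\mvn}$ is bounded. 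We would therefore reuse that representation with $L^2$ error bounds uniform in $\norm{m}\le K$. Only finitely many lattice profiles matter at each scale, with about $K^{\gk-1}$ effective points after rescaling, so a union bound over a $\sqrt N$-scale is not needed. Instead, Riemann-sum control suffices.

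The second step is the combinatorics. Using Stirling, one has uniformly for $\norm{m_{\mvn}}\le K$
\[
\abs{\Sigma_N^{(\mvn)}}=\gk^N(2\pi N)^{-(\gk-1)/2}\gk^{\gk/2}e^{-\norm{m_{\mvn}}^2/2}(1+o(1)).
\]
The lattice of values of $m_{\mvn}$ has spacing $\sqrt{\gk/N}$ inside the $(\gk-1)$-dimensional space $\cV$. Summing $e^{-(1-\alpha)\norm{m}^2/2}$ over this lattice gives, by a Riemann sum,
\[
\gk^N(1-\alpha)^{-(\gk-1)/2}(1+o(1)).
\]
The normalizations $N^{(\gk-1)/2}$ and $\gk^{\gk/2}$ cancel against the lattice covolume. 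Since $(1-\alpha)^{-(\gk-1)/2}=e^{(\gk-1)\varphi(\alpha)}e^{(\gk-1)\alpha/2}$, we get the term $(\gk-1)\varphi(\alpha)$ in the mean. The leftover $e^{(\gk-1)\alpha/2}=e^{\beta^2(\gk-1)(\gk-2)/4}$ exactly converts $\frac{\beta^2(\gk-1)}4(N-\gk+1)$ into $\frac{\beta^2(\gk-1)}4(N-1)$. This consistency check confirms the bookkeeping. The restriction $\norm{m}\le K$ is then removed by letting $K\to\infty$, since the Gaussian tail contribution is $O(e^{-cK^2})$.

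The third and main step is to show that profiles with $\norm{m_{\mvn}}_2>K$ contribute negligibly. Concretely, we need
\[
\sum_{\norm{m_{\mvn}}>K}Z_N^{(\mvn)}\le\eps_K\,\gk^Ne^{\beta^2(\gk-1)(N-1)/4}
\]
with probability tending to $1$, where $\eps_K\to0$ as $K\to\infty$. We would bound this by a first moment with respect to truncated disorder, since $\E$ is infinite otherwise only through rare events handled by truncation. By Lemma~\ref{lem:general-kernels}, more precisely its global form, $\E e^{H_N(\mvgs)}\le Ce^{\beta^2(\gk-1)N/4}e^{\alpha\norm{m(\mvgs)}^2/2}$ holds up to factors uniform in $\mvgs$. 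The task then reduces to bounding $\gk^{-N}\sum_{\mvgs}e^{\alpha\norm{m(\mvgs)}^2/2}\ind\{\norm{m}>K\}$, the tail of a Curie--Weiss--Potts-type partition function. Here the sub-Gaussian constant is used. Since $m$ is a normalized sum of i.i.d.\ copies of $\phi(\sigma)$ with constant $\eta_\phi(\gk)$, the hypothesis $\alpha<\alpha_c(\gk)=\eta_\phi^{-1}$ gives $\E e^{\alpha'\norm{m}^2/2}<\infty$ uniformly in $N$ for some $\alpha'>\alpha$. For that, we would compute via Gaussian linearization $e^{\alpha\norm m^2/2}=\E_g e^{\sqrt\alpha\, g^\intercal m}$ and the sub-Gaussian bound. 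Hence the tail is $\le e^{-(\alpha'-\alpha)K^2/2}C$. The obstacle is that the first-moment estimate with the $e^{\alpha\norm m^2/2}$ factor must hold globally, not just for bounded $m$, including for macroscopically unbalanced profiles. There the exact quadratic-in-$p$ exponent $\frac{\beta^2}{2N}\sum_{i<j}A_{ij}^2$ must be computed exactly. The second condition $\beta^2<\alpha_c$ enters only through Theorem~\ref{thm:balanced_clt}. Markov's inequality then finishes, and combining the three steps with Slutsky's lemma yields the stated limit.
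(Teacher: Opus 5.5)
Your outline follows the paper's proof. There, Theorem~\ref{thm:full_clt} is Theorem~\ref{thm:balanced_clt} at $\mvn^*$ plus Proposition~\ref{prop:profile-equiv}. That proof does four things:
\begin{enumeratei}
\item it decomposes over profiles and replaces each $\cZ_N^{(\mvn)}$ on a bounded window by $\cZ_N^{(\mvn^*)}$;
\item it sums the annealed weights $P_{\mvn}w_{\mvn}$ to $(1-\alpha)^{-(\gk-1)/2}$;
\item it removes large magnetizations by a first-moment bound based on Lemma~\ref{lem:expmoment} and $\alpha<\alpha_c(\gk)$;
\item it uses the lognormal limit~\eqref{eq:general-profile-lognormal} to keep $\cZ_N^{(\mvn^*)}$ away from zero.
\end{enumeratei}
Your bookkeeping of $(\gk-1)\varphi(\alpha)$ and of the shift from $N-\gk+1$ to $N-1$ matches the paper, and so does your Stirling/lattice computation.

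The ``obstacle'' you flag in Step~3 is not one. The identity~\eqref{eq:selfoverlap} holds exactly for every configuration, and the remainder in~\eqref{eq:general-self-kernel} is uniformly bounded because $\norm{m}_2^2\le N(\gk-1)$. Hence $e^{c_N(\mvn)}\le Ce^{\beta^2(\gk-1)N/4+\alpha\norm{m_{\mvn}}_2^2/2}$ globally.

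Two differences are minor. The paper gets the window comparison $\sup_{\norm{m_{\mvn}}_2\le L}\E\big((\cZ_N^{(\mvn)}-\cZ_N^{(\mvn^*)})^2\big)\to0$ directly from mixed second moments (Lemma~\ref{lem:general-profile-moments}), not from the loop representation. Your route also works, since every profile in the window is uniformly $L^2$-close to the same $f_{\ell,B}(\wt G_{N,\ell})$. But it is a detour, because it already needs the diagonal case of that second-moment limit. The paper also sums the annealed weights by the CLT plus uniform integrability rather than Stirling and a Riemann sum.

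One claim in Step~1 needs correcting. The window $\norm{m_{\mvn}}_2\le K$ contains of order $(K\sqrt N)^{\gk-1}$ profiles, not finitely many. So uniform convergence in probability of the log-differences does not by itself control the sum of the random factors over the window. What closes the step is an $L^1$ bound that is uniform over the window, such as the $L^2$ collapse above. Combine it with $\sum_{\mvn}P_{\mvn}\le1$ and the boundedness of $w_{\mvn}$ on the window, as in~\eqref{eq:general-profile-split}. Linearity of expectation then handles the whole window, with no union bound needed.
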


Theorem~\ref{thm:full_clt} follows from Theorem~\ref{thm:balanced_clt} by a comparison of profiles, and the two limiting variances coincide. After accounting for the different centerings, the limiting mean in Theorem~\ref{thm:full_clt} exceeds that in Theorem~\ref{thm:balanced_clt} by $(\gk-1)\varphi(\alpha)$. This deterministic excess is the contribution of the Gaussian fluctuations of the color profile, see~\eqref{eq:general-profile-tilt}.
Our last result describes the joint behavior of the magnetizations of two replicas and of their overlap matrix.

\begin{thm}[Joint quenched central limit theorem]\label{thm:C}
	Let $0<\beta<\beta_{\max}(\gk)$, and let
    \[
    g,g'\sim\N\big(0,\Pi/(1-\alpha)\big)\quad\text{and}\quad K\sim\N\big(0,\Pi\otimes\Pi/(1-\beta^2)\big)
    \]
    be independent. Then, for every bounded continuous $F$ on $\dR^\gk\times\dR^\gk\times\dR^{\gk\times\gk}$,
	\begin{align*}
		\left\langle F\big(m(\mvgs^1),m(\mvgs^2),\sR(\mvgs^1,\mvgs^2)\big)
			\right\rangle_{\varrho_{N,\beta}^{\otimes2}}
		\longrightarrow \E\big(F(g,g',K)\big)
		\quad\text{in probability}.
	\end{align*}
\end{thm}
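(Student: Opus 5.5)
The plan is to reduce the quenched statement to an annealed computation for two replicas, using the second-moment and cluster-expansion machinery behind Theorems~\ref{thm:balanced_clt} and~\ref{thm:full_clt}. By a standard approximation argument it suffices to treat test functions of the form $F(a,b,s)=\exp\bigl(i\langle\lambda,a\rangle+i\langle\lambda',b\rangle+i\tr(\Theta^{\intercal}s)\bigr)$ with $\lambda,\lambda'\in\cV$ and $\Theta\in\cV\otimes\cV$. This is enough because $m\in\cV$ and $\sR\in\cV\otimes\cV$ always, and the family $(m(\mvgs^1),m(\mvgs^2),\sR)$ is tight under $\varrho_{N,\beta}^{\otimes2}$ with high probability; tightness follows from the moment bounds below. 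Write
\[
\bigl\langle F\bigr\rangle_{\varrho^{\otimes2}}=\frac{\sum_{\mvgs^1,\mvgs^2}F(\cdot)\,e^{H_N(\mvgs^1)+H_N(\mvgs^2)}}{Z_N(\beta)^2}=:\frac{W_N(F)}{Z_N(\beta)^2}.
\]
The goal is to show $W_N(F)/Z_N^2\to\E F(g,g',K)$ in probability. I will do this by a first and second moment argument on the ratio. Concretely, I want to show that $W_N(F)/\E W_N(1)$ and $Z_N^2/\E W_N(1)$ have the same random limit up to a factor $\E F(g,g',K)+o_P(1)$, where the expectations are taken for the truncated disorder as in Lemma~\ref{lem:general-kernels}.

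\textbf{Step 1: annealed computation.} Using Lemma~\ref{lem:general-kernels} and the edge identity $A_{ij}=\phi(\sigma_i)^{\intercal}\phi(\sigma_j)$, the disorder average of $e^{H_N(\mvgs^1)+H_N(\mvgs^2)}$ equals, up to a factor $1+o(1)$ and a deterministic constant,
\[
\exp\Bigl(\tfrac{\alpha}{2}\norm{m(\mvgs^1)}^2+\tfrac{\alpha}{2}\norm{m(\mvgs^2)}^2+\tfrac{\beta^2}{2}\norm{\sR(\mvgs^1,\mvgs^2)}_F^2\Bigr).
\]
Summing over uniform $(\mvgs^1,\mvgs^2)$, the multivariate CLT says $(m^1,m^2,\sR)$ is asymptotically Gaussian $\N(0,\Pi)\otimes\N(0,\Pi)\otimes\N(0,\Pi\otimes\Pi)$ under the uniform measure. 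Tilting by the quadratic exponential then produces exactly the covariances $\Pi/(1-\alpha)$ and $(\Pi\otimes\Pi)/(1-\beta^2)$, with independent components. Interchanging the limit with the exponential tilt needs uniform integrability. This is supplied by large deviations via the sub-Gaussian constant $\eta_\phi$ of Lemma~\ref{lem:eta-phi}: $\alpha<\alpha_c$ controls the magnetization and $\beta^2<\alpha_c$ controls the overlap (the pair field $\phi(\sigma)\otimes\phi(\tau)$ has the same sub-Gaussian constant). This is where $\beta<\beta_{\max}$ enters. The result is $\E W_N(F)/\E W_N(1)\to\E F(g,g',K)$.

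\textbf{Step 2: concentration of the ratio.} The annealed identity is not enough by itself, because $Z_N$ has order-one multiplicative fluctuations (Theorem~\ref{thm:full_clt}). I would handle this as in the proof of Theorems~\ref{thm:balanced_clt} and~\ref{thm:full_clt}: decompose the partition functions via the cluster (loop) expansion. In that expansion, $Z_N/\E Z_N$ is asymptotically $\exp$ of a sum of loop statistics $\sum_{\text{loops}}\prod_e J_e A_e$, and the profile comparison reduces everything to balanced profiles. The same expansion applied to $W_N(F)$ should give
\[
\frac{W_N(F)}{\E W_N(F)}=\Bigl(\frac{Z_N}{\E Z_N}\Bigr)^2(1+o_P(1)).
\]
The reason is that the loop variables entering the random factor are asymptotically independent of the macroscopic fields $(m^1,m^2,\sR)$: the twist $F$ only alters finitely many Fourier modes, while loops of length $\ge3$ contribute through spin averages that decouple. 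Technically, I would compute $\E\bigl[(W_N(F)-\E W_N(F)\,(Z_N/\E Z_N)^2)^2\bigr]$ in terms of four-replica annealed sums. Alternatively, I would compute the second moment of $W_N(F)\cdot\prod(\text{loop-truncated normalizer})^{-1}$ as in the second-moment method used for Theorem~\ref{thm:balanced_clt}, and show that the variance is $o\bigl((\E W_N(F))^2\bigr)$.

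\textbf{Main obstacle.} The hard part is Step~2, the four-replica second moment. It requires a Laplace-type estimate for the joint Gaussian tilt of six overlap matrices and four magnetizations, together with the truncation of the disorder and control of the $\mu_4$ corrections. The large-deviation bound must again be uniform, and here the thresholds $\beta^2<\alpha_c$ and $\alpha<\alpha_c$ are used on the full overlap space rather than only near the origin. I would first try to reuse, essentially verbatim, the second-moment estimate already established for $Z_N^{\bal}$, inserting the bounded twist $F$. The twist only modifies a finite-dimensional Gaussian integral and does not affect the exponential bounds. Combining Steps~1 and~2 gives $W_N(F)/Z_N^2\to\E F(g,g',K)$ in probability.
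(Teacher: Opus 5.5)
Your target identity, that $W_N(F)/\E W_N(F)$ should behave like $(Z_N/\E Z_N)^2$, is essentially what the paper proves in the form~\eqref{eq:general-overlap-factorization}. However, both of your steps rely on annealed moments over configurations far from balanced, and these are not governed by the Gaussian tilt throughout $\beta<\beta_{\max}(\gk)$.

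\textbf{Step 1.} The claim that $\phi(\sigma)\otimes\phi(\tau)$ has sub-Gaussian constant $\eta_\phi(\gk)$ is false. By Lemma~\ref{lem:eta-dual}, the constant-color type $q=e_1e_1^{\intercal}$ already gives $(\gk-1)^2/(4\ln\gk)>\eta_\phi(\gk)$ for $\gk\geq4$. More concretely, the single pair $\mvgs=\mvgt\equiv1$ yields
\[
\frac{\E\big(Z_N(\beta;\wt J)^2\big)}{\E\big(Z_N(\beta;\wt J)\big)^2}\geq c\,\exp\Big\{N\Big(\beta^2(\gk-1)\big(\gk-\tfrac32\big)-2\ln\gk\Big)\Big\}.
\]
For $\gk=3$ this diverges once $\beta^2>\tfrac23\ln3\approx0.73$, while $\beta_{\max}(3)^2=\alpha_c(3)\approx0.92$. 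There $\E W_N(F)/\E W_N(1)$ is dominated by atypical ordered configurations, and the uniform integrability you invoke fails. The paper never takes second moments across far profiles. Lemma~\ref{lem:profile-moments} controls the pair exponential moment only on bounded magnetization windows, through Lemma~\ref{lem:near-uniform}. Proposition~\ref{prop:profile-equiv} then handles the remaining profiles using only first moments and $1/\cZ_N^{(\mvn^*)}=\cO_{\pr}(1)$ from Theorem~\ref{thm:balanced_clt}.

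\textbf{Step 2.} This step fails even if you restrict to balanced profiles. The variance of $W_N(F)-\E W_N(F)(Z_N/\E Z_N)^2$ involves four-replica moments, and these blow up well before the threshold. Four identical balanced replicas give $\E\big((\cZ_N^{\bal})^4\big)\geq c\,e^{N(3\beta^2(\gk-1)-3\ln\gk)}$. This diverges for $\beta^2>\tfrac12\ln3\approx0.55$ when $\gk=3$, and for $\beta^2>\ln2$ already in the SK case. The twist $F$ does not cancel these contributions, because $\abs{F-c}^2$ does not vanish at aligned configurations. The two-replica estimate for $Z_N^{\bal}$ gives no control over four replicas.

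\textbf{Missing idea.} You need the Comets--Neveu normalization of Lemma~\ref{lem:general-CN-key}. Insert $e^{-\Lambda_N(\mvgs,\mvgt)}$ into the pair sum to form $U_N(F)$. For compactly supported $F$ with $\E F(K_0)=0$, apply Cauchy--Schwarz under the planted measure with density $e_N(\mvgs)$. By~\eqref{eq:general-triple-kernel}, the bound becomes a three-replica average in which only $e^{\Lambda_N(\mvgt,\mvgt')}$ is exponentiated. It therefore needs only the windowed pair bound of Lemma~\ref{lem:profile-moments}, and it vanishes by the asymptotic independence of the three overlaps in Lemma~\ref{lem:profile-clt}. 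A density argument in $\cG_{(T)}$ then reaches $F=e^{\beta^2\xi}f$ and gives~\eqref{eq:general-overlap-factorization}. The proof then finishes in three moves: sum over windowed profiles and divide by $\mathrm{Den}_N^2$; remove the window with~\eqref{eq:general-color-tail}; identify the magnetization law with Lemma~\ref{lem:general-color-clt}.
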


\begin{rem}[Special cases]\label{rem:special-cases}
	\begin{enumeratea}
	    \item Theorem~\ref{thm:balanced_clt} applies to $Z_N^{\bal}$ along $\gk\mid N$, where $m_{\mvn^*}=0$. It also applies to $Z_N^{(\mvn^*)}$ for every $N$, where the magnetization correction vanishes in the limit.
        \item For Gaussian disorder $\mu_4=0$, so $\varphi_{\gk,J}=\varphi_\gk$ and the fourth-cumulant term in the mean vanishes.
        \item For $\gk=2$ we have $\alpha=0$, $\alpha_c(2)=\beta_{\max}(2)=1$, and $\varphi_2=\varphi$. For Gaussian disorder, Theorem~\ref{thm:full_clt} then states that $\ln Z_N(\beta)-N\ln2-\beta^2(N-1)/4\Longrightarrow\N\big(-\tfrac12\varphi(\beta^2),\varphi(\beta^2)\big)$ for $\beta<1$. This is the central limit theorem of Aizenman, Lebowitz and Ruelle~\cite{ALR87} for the SK model. For Rademacher couplings $\E(J^4)=1$, and the limiting variance reduces to the loop contribution $\sum_{n\geq3}\beta^{2n}/(2n)$.
	\end{enumeratea}
\end{rem}

\begin{rem}[The two temperature conditions]\label{rem:temperature-range}
	The two conditions defining $\beta_{\max}(\gk)$ enter at different stages of the proof. a)~\textbf{The overlap condition $\beta^2<\alpha_c(\gk)$}. This condition controls exponential moments of the overlap between uniform configurations with bounded magnetizations (Lemma~\ref{lem:profile-moments}). These bounds yield the mixed second-moment limits for the normalized partition functions and the overlap comparison. This is the only temperature restriction needed for Theorem~\ref{thm:balanced_clt}.
    b)~\textbf{The profile condition $\alpha<\alpha_c(\gk)$}. This additional condition controls the contribution of profiles with large magnetization when removing the profile constraint. For the truncated disorder, the annealed profile weights on bounded magnetization windows are asymptotically proportional to $P_{\mvn}e^{\alpha\norm{m_{\mvn}}_2^2/2}$, where $P_{\mvn}$ is the multinomial probability of $\mvn$. These leading weights are those of the Curie--Weiss--Potts model at inverse temperature $\gk\alpha$, and $\alpha<\alpha_c(\gk)$ places that model in its high-temperature phase~\cite{EW90}. The resulting tail bounds allow the comparison to extend to all profiles (Proposition~\ref{prop:profile-equiv}).
\end{rem}

\subsection{Ideas behind the proofs}\label{sec:ideas}

Near-balanced profiles asymptotically share the same normalized random factor. We identify its law using quadratic and loop statistics, then sum the deterministic profile weights to obtain the unconstrained free energy. An overlap comparison gives the joint quenched limit.

We first replace $J$ by the truncated disorder $\wt J$ of Section~\ref{sec:general-kernels}, so that the exponential moments needed for normalization and tilting exist. The two disorder arrays agree with probability $1-o(1)$ by~\eqref{eq:truncation-coupling}, allowing all conclusions to transfer back to $J$.

\smallskip\noindent\textbf{Step 1: a common random factor.} For each profile we normalize by the disorder average,
\begin{align*}
 \cZ_N^{(\mvn)}:=\frac{Z_N^{(\mvn)}(\beta;\wt J)}{\E\big(Z_N^{(\mvn)}(\beta;\wt J)\big)}.
\end{align*}
After averaging over the disorder, the mixed second moments are expressed through overlaps of independent uniform configurations with prescribed profiles. The overlap CLT and exponential-moment bounds give the same limit for all profile pairs in a bounded magnetization window (Lemma~\ref{lem:general-profile-moments}). Expanding the square therefore gives, for every fixed $L>0$,
\begin{align*}
 \sup_{\norm{m_{\mvn}}_2\leq L}
 \E\Big(\big(\cZ_N^{(\mvn)}-\cZ_N^{(\mvn^*)}\big)^2\Big)\longrightarrow0.
\end{align*}

\smallskip\noindent\textbf{Step 2: quadratic and loop statistics (Theorem~\ref{thm:balanced_clt}).} We approximate the common factor by $e^{\wt G_{N,\ell}-v_\ell/2}$, where $\wt G_{N,\ell}$ is the weighted sum~\eqref{eq:general-loop-statistic} of a centered quadratic statistic and loop statistics of lengths $3,\ldots,\ell$. The quadratic term comes from squared couplings, and the loop terms from products of couplings around cycles; Remark~\ref{rem:loop-motivation} explains this choice. Write $v_\ell:=\varphi_{\gk,J,\ell}(\beta^2)$ for its limiting variance, with $v_\ell\uparrow v:=\varphi_{\gk,J}(\beta^2)$.

The mixed term in the squared error can be evaluated by tilting the disorder with the normalized Boltzmann weight of a fixed configuration. We prove
\begin{align*}
 \wt G_{N,\ell}&\Longrightarrow\N(0,v_\ell)
 &&\text{under the product law, and}\\
 \wt G_{N,\ell}&\Longrightarrow\N(v_\ell,v_\ell)
 &&\text{under the tilted law of a near-balanced configuration}.
\end{align*}
With the bounded cutoff $f_{\ell,B}(x):=\min\{e^{x-v_\ell/2},B\}$, these limits and the second-moment limit $\E\big((\cZ_N^{(\mvn^*)})^2\big)\to e^v$ give
\begin{align*}
 \lim_{B\to\infty}\lim_{N\to\infty}
 \E\Big(\big(\cZ_N^{(\mvn^*)}-f_{\ell,B}(\wt G_{N,\ell})\big)^2\Big)
 =e^v-e^{v_\ell}\longrightarrow0\quad(\ell\to\infty).
\end{align*}
Thus the quadratic and loop statistics account for the limiting fluctuations. Consequently $\ln\cZ_N^{(\mvn^*)}\Longrightarrow\N(-v/2,v)$, and the deterministic normalization gives Theorem~\ref{thm:balanced_clt}.

\smallskip\noindent\textbf{Step 3: removing the profile constraint (Theorem~\ref{thm:full_clt}).} On bounded magnetization windows, Step 1 lets us replace the normalized profile partition functions by the common factor. Under $\alpha<\alpha_c(\gk)$, exponential-moment bounds control the remaining profiles and extend this replacement to the full profile sum. The common factor cancels asymptotically from the ratio, which is determined by the annealed profile weights, whose leading dependence on magnetization is $e^{\alpha\norm{m_{\mvn}}_2^2/2}$. The magnetization CLT under uniform colors and the same tail bounds give
\begin{align*}
 \frac{Z_N(\beta;\wt J)}{Z_N^{(\mvn^*)}(\beta;\wt J)}\cdot
 \frac{\abs{\Sigma_N^{(\mvn^*)}}}{\gk^N}
 \longrightarrow(1-\alpha)^{-(\gk-1)/2}
 \quad\text{in probability}.
\end{align*}
Taking logarithms gives the deterministic difference between the two free energies.

\smallskip\noindent\textbf{Step 4: quenched overlaps (Theorem~\ref{thm:C}).} For fixed near-balanced profiles, the disorder-averaged pair weight produces a quadratic tilt of the uniform Gaussian overlap limit, giving covariance $\Pi\otimes\Pi/(1-\beta^2)$. For quenched convergence we follow Comets and Neveu~\cite{CN95}. For test functions centered under the uniform Gaussian law, Cauchy--Schwarz controls a reweighted, unnormalized pair average through three configurations. The asymptotic independence of their pairwise overlaps makes the centered error vanish. After normalization, this gives the quenched overlap limit. The profile comparison and tail bounds from Step 3 then allow us to average over profiles, yielding, for bounded continuous $h_1,h_2,f$,
\begin{align*}
 \Big\langle h_1(m(\mvgs^1))h_2(m(\mvgs^2))f(\sR(\mvgs^1,\mvgs^2))\Big\rangle_{\varrho_{N,\beta}^{\otimes2}}
 \longrightarrow \E\big(h_1(g)\big)\E\big(h_2(g')\big)\E\big(f(K)\big)
 \quad\text{in probability},
\end{align*}
with $g,g',K$ as in Theorem~\ref{thm:C}. This factorization gives the joint limit and independence.

\begin{rem}[Motivation for the quadratic and loop approximation]\label{rem:loop-motivation}
 The graph expansion of the unconstrained partition function motivates the disorder statistics used for the common factor in Step 1. Define
 \begin{align*}
  \mathrm{Ch}_\gk(x)&:=e^{-x}\big(1+\gk^{-1}(e^{\gk x}-1)\big)
  \quad\text{and}\quad
  \mathrm{Th}_\gk(x):=\frac{e^{\gk x}-1}{e^{\gk x}+\gk-1}.
 \end{align*}
 The identity $e^{xA_e(\mvgs)}=\mathrm{Ch}_\gk(x)\big(1+\mathrm{Th}_\gk(x)A_e(\mvgs)\big)$ gives the exact factorization
 \begin{align*}
  Z_N(\beta;J)&=\gk^N Z_N^{\mathrm{ch}}(\beta)Z_N^{\mathrm{cl}}(\beta),
  \quad\text{where}\\
  Z_N^{\mathrm{ch}}(\beta):=\prod_{e\in E_N}\mathrm{Ch}_\gk(\beta J_e/\sqrt N),\quad &
  Z_N^{\mathrm{cl}}(\beta):=\E_{\mvgs}\Big(\prod_{e\in E_N}
  \big(1+ \omega_eA_e(\mvgs)\big)\Big),
 \end{align*}
 where $\E_{\mvgs}$ is the uniform average on $\Sigma_N$ and $\omega_e:=\mathrm{Th}_\gk(\beta J_e/\sqrt N)$. For $\Gamma \subseteq E_N$ write $\omega(\Gamma):=\prod_{e\in\Gamma}\omega_e$. Expanding the product yields
 \begin{align*}
  Z_N^{\mathrm{cl}}(\beta)=\sum_{\Gamma\subseteq E_N}f_\gk(\Gamma)\omega(\Gamma),
  \quad f_\gk(\Gamma):=\E_{\mvgs}\Big(\prod_{e\in\Gamma}A_e(\mvgs)\Big).
 \end{align*}
 If $\Gamma$ has a vertex of degree one, averaging over its color gives $f_\gk(\Gamma)=0$. For a loop $\gamma$ of length $n$, $f_\gk(\gamma)=\tr(\Pi^n)=\gk-1$.

 To identify the leading random terms, use $\mathrm{Th}_\gk(x)=x+\cO(x^2)$ and replace $\omega_e$ by the centered weight $\beta J_e/\sqrt N$. For a fixed graph with $v$ nonisolated vertices and $r$ edges, the sum over its copies has variance $\cO(N^{v-r})$, since distinct edge sets give orthogonal monomials. The contributing nonempty graphs have minimum degree at least two, so $v\leq r$, with equality precisely for vertex-disjoint unions of loops. Their coefficients and edge products factor over the loops, suggesting the formal approximation
 \begin{align*}
  Z_N^{\mathrm{cl}}(\beta)\approx Z_{N,\ell}^{\mathrm{loop}}(\beta)
  :=\prod_{\substack{\gamma\text{ loop}\\3\leq\abs\gamma\leq\ell}}
  \big(1+(\gk-1)\omega(\gamma)\big).
 \end{align*}
 The linear term in the formal expansion of $\ln Z_{N,\ell}^{\mathrm{loop}}$, with each $\omega_e$ replaced by $\beta J_e/\sqrt N$, gives $(\gk-1)\sum_{n=3}^{\ell}\beta^nY_{N,n}(J)$. The expansion $\ln\mathrm{Ch}_\gk(x)=\frac{\gk-1}{2}x^2+\cO(x^3)$ supplies the centered quadratic term $(\gk-1)\beta^2X_N(J)/2$, with $X_N$ and $Y_{N,n}$ defined in~\eqref{eq:array-cycle-statistics}. After truncation, these terms give the statistic used in the proof,
 \begin{align*}
  \wt G_{N,\ell}
  =\frac{(\gk-1)\beta^2}{2}\wt X_N
  +(\gk-1)\sum_{n=3}^{\ell}\beta^n\wt Y_{N,n},
 \end{align*}
 where $\wt X_N$ and $\wt Y_{N,n}$ are defined in~\eqref{eq:truncated-loop-statistics}. At $\gk=2$, these are the quadratic and loop terms of the cluster expansion in~\cite{ALR87}.
\end{rem}

\subsection{Related work}
Panchenko~\cite{Pan18} proved a Parisi-type variational formula for the free energy at all temperatures. For the balanced model, Bates and Sohn~\cite{BS24} reduced the order parameter to that of the SK model, proving a prediction of Elderfield and Sherrington~\cite{ES83}. For the model with a self-overlap correction, Chen~\cite{Che24} obtained a Parisi formula over scalar-valued paths and proved uniqueness of the Parisi measure. Mourrat~\cite{Mou25} proved that color symmetry breaks in the unconstrained model on a nonempty interval of inverse temperatures for $\gk\geq58$. Chen~\cite{Che25} related color symmetry breaking to a ferromagnetic transition when a ferromagnetic interaction is added. Kim~\cite{Kim26} recently showed, for $\gk\geq3$, that color symmetry is preserved at high temperature and that the limiting free energy equals the annealed value of the centered model. The proof uses a second-moment method on the balanced model with a centered Hamiltonian. The centered Hamiltonian used in~\cite{Kim26} sums Gaussian couplings over ordered pairs of sites with the weights $\ind\{\sigma_i=\sigma_j\}-\gk^{-1}$. Matching its off-diagonal interactions with ours amounts to replacing its inverse temperature by $\gk\beta/\sqrt2$. With this substitution, the threshold~\cite[(1.3)]{Kim26} becomes $\beta_{\max}(\gk)$.

For the SK model at high temperature and without external field, $\ln Z_N$ has Gaussian fluctuations of order one. The two classical proofs are the cluster expansion of Aizenman, Lebowitz and Ruelle~\cite{ALR87} and the stochastic calculus approach of Comets and Neveu~\cite{CN95}, see also~\cite{Tal11}. Our proof of Theorem~\ref{thm:balanced_clt} is closer in spirit to the first, and our proof of Theorem~\ref{thm:C} adapts the second. The literature on free-energy fluctuations in this family of models is by now large. It covers other temperature regimes, external fields, and spherical and multi-species variants such as~\cite{DW21}. We refer to the survey of Collins-Woodfin and Le~\cite{CL25}. Finally, our second-moment argument with loop statistics is a Gaussian analogue of the small subgraph conditioning method of Robinson and Wormald~\cite{RW94} and Janson~\cite{Jan95}. Banerjee~\cite{Ban20} used a Gaussian version of this method to prove free-energy fluctuations for the SK model with Curie--Weiss interaction, combining the Gaussian limits of cycle statistics under the original and tilted disorder laws with a second-moment bound, see also~\cite{DW21,DK26b}. Similar loop (cycle) statistics govern the likelihood ratio in planted partition models~\cite{MNS15,Ban18}. There the planted law plays the role of our tilted disorder law.

\subsection{Notation}\label{sec:conventions}
We write $\norm{\cdot}_2$ for the Euclidean norm, $\norm{\cdot}_F$ for the Frobenius norm of a matrix or tensor, and $\langle s,t\rangle_F:=\tr(s^{\intercal}t)$ for the Frobenius inner product. Expectation, variance and probability are always written $\E(\cdot)$, $\var(\cdot)$ and $\pr(\cdot)$. A subscript of $\E$ indicates the variables being averaged or the law used, as in $\E_{\mvgs}(\cdot)$ or $\E_q(\cdot)$. A superscript specifies a law or prescribed profiles. Configuration averages are uniform unless stated otherwise. An undecorated $\E(\cdot)$ is the expectation over the randomness specified by the context. When it averages over the disorder, configurations are held fixed. The same conventions apply to $\pr$ and $\var$. Limits under uniform configurations carry the subscript $0$: $g_0\sim\N(0,\Pi)$ and $K_0\sim\N(0,\Pi\otimes\Pi)$. The letters $g,g',K$ are reserved for the Gibbs limits in Theorem~\ref{thm:C}, and other auxiliary Gaussian variables are denoted by $\zeta$. Constants $C,c$ may depend on $\gk$, $\beta$, the law of $J$ and fixed parameters such as $L$, but not on $N$ or on profiles in a specified window. They may change from one occurrence to the next.

\subsection{Organization}
This article is organized as follows. 
Section~\ref{sec:profile} studies two independent uniform configurations with prescribed profiles.  Theorem~\ref{thm:balanced_clt} is proved in Section~\ref{sec:general-fluctuations}. Section~\ref{sec:general-color} compares profiles and proves Theorem~\ref{thm:full_clt}.  Finally, Section~\ref{sec:general-overlap} proves Theorem~\ref{thm:C}. Appendix~\ref{sec:constants} establishes the entropy and exponential-moment bounds behind both thresholds.

%%%%%%%%%%%%%%%%%%%%%%%%%%%%%%%%%%%%%%%%%%%%%%%%%%%%%%%%%%%%%%%%%%%%%%%%%%%%%%%
\section{Uniform configurations with fixed profiles}\label{sec:profile}
%%%%%%%%%%%%%%%%%%%%%%%%%%%%%%%%%%%%%%%%%%%%%%%%%%%%%%%%%%%%%%%%%%%%%%%%%%%%%%%
All configuration laws in this section are uniform and independent of the disorder. We write $\E_{\mvgs}^{(\mvn)}(\cdot)$ for the uniform average on $\Sigma_N^{(\mvn)}$, and $\E_{\mvgs,\mvgt}^{(\mvn),(\mvn')}(\cdot)$ for independent uniform averages on the indicated classes. The notation extends to three configurations.
After the disorder is averaged out, all quantities in Sections~\ref{sec:general-fluctuations}--\ref{sec:general-overlap} become averages over independent uniform configurations with prescribed profiles. The overlap of two such configurations is a combinatorial statistic of a random permutation, so it has a Gaussian limit (Lemma~\ref{lem:profile-clt}). Its exponential moments are computed by the method of types and controlled by the entropy bound of Lemma~\ref{lem:near-uniform} (Lemma~\ref{lem:profile-moments}).

\subsection{Edge identities and the Gaussian kernel}
Set
\begin{align}
	\xi(x):=\tfrac12\big(\norm{x}_F^2-(\gk-1)^2\big),
	\label{eq:xi-def}
\end{align}
and, for three configurations, define the tensor
\begin{align*}
	R^{(3)}(\mvgs,\mvgt,\mvgt'):=\frac1{\sqrt N}\sum_{i=1}^N
	\phi(\sigma_i)\otimes\phi(\tau_i)\otimes\phi(\tau_i').
\end{align*}

\begin{lem}[Edge identities]\label{lem:edge-id}
	For all configurations $\mvgs,\mvgt$ and all $x\in[\gk]$, and with $\sigma$ uniform on $[\gk]$ in~\eqref{eq:phi-cov},
	\begin{align}
		\phi(\sigma_i)^{\intercal}\phi(\sigma_j)=A_{ij}(\mvgs),\quad
		\norm{\phi(x)}_2^2=\gk-1,
		\label{eq:phi-prod}
	\end{align}
	\begin{align}
		\E_{\sigma}\big(\phi(\sigma)\big)=0,\quad
		\E_{\sigma}\big(\phi(\sigma)\phi(\sigma)^{\intercal}\big)=\Pi,
		\label{eq:phi-cov}
	\end{align}
	\begin{align}
		\sum_{i<j}A_{ij}(\mvgs)=\frac N2\big(\norm{m(\mvgs)}_2^2-\gk+1\big),
		\label{eq:vertex-edge}
	\end{align}
	\begin{align}
		\frac1N\sum_{i<j}A_{ij}(\mvgs)A_{ij}(\mvgt)=\xi\big(\sR(\mvgs,\mvgt)\big),
		\label{eq:pair-edge}
	\end{align}
	\begin{align}
		\frac{\beta^2}{2}\xi\big(\sR(\mvgs,\mvgs)\big)=\frac{\beta^2(\gk-1)}4(N-\gk+1)
								+\frac\alpha2\norm{m(\mvgs)}_2^2.
		\label{eq:selfoverlap}
	\end{align}
	For three configurations,
	\begin{align}
		\frac1N\sum_{i<j}A_{ij}(\mvgs)A_{ij}(\mvgt)A_{ij}(\mvgt')
		=\frac12\big(\norm{R^{(3)}(\mvgs,\mvgt,\mvgt')}_F^2-(\gk-1)^3\big).
		\label{eq:triple-edge}
	\end{align}
	If $N\geq2$ and $\mvgs^1,\mvgs^2,\mvgs^3$ are independent uniform configurations with profiles $\mvn_1,\mvn_2,\mvn_3$, then
	\begin{align}
		\E_{\mvgs^1,\mvgs^2,\mvgs^3}^{(\mvn_1),(\mvn_2),(\mvn_3)}\Big(
		\norm{R^{(3)}(\mvgs^1,\mvgs^2,\mvgs^3)}_F^2\Big)
		=(\gk-1)^3+
		\frac{\prod_{r=1}^3\big(\norm{m_{\mvn_r}}_2^2-\gk+1\big)}{(N-1)^2}.
		\label{eq:triple-overlap-moment}
	\end{align}
\end{lem}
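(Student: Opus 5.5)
Most of the identities follow from the single algebraic fact $\phi(x)^{\intercal}\phi(y)=\gk\,(e_x-u)^{\intercal}(e_y-u)=\gk\big(\ind\{x=y\}-2/\gk+1/\gk\big)=\gk\ind\{x=y\}-1$. This gives~\eqref{eq:phi-prod} directly, and taking $x=y$ gives $\norm{\phi(x)}_2^2=\gk-1$. For~\eqref{eq:phi-cov}, we have $\E_\sigma e_\sigma=u$, and $\E_\sigma\big((e_\sigma-u)(e_\sigma-u)^{\intercal}\big)=\gk^{-1}I-uu^{\intercal}=\gk^{-1}\Pi$. Multiplying by $\gk$ gives $\Pi$.

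The pair and triple sums all come from one device. Write $\sum_{i<j}=\tfrac12\big(\sum_{i,j}-\sum_{i=j}\big)$ and express each summand as an inner product of tensors.
\begin{itemize}
\item For~\eqref{eq:vertex-edge}: $\sum_{i,j}\phi(\sigma_i)^{\intercal}\phi(\sigma_j)=N\norm{m}_2^2$, and the diagonal contributes $N(\gk-1)$.
\item For~\eqref{eq:pair-edge}: $A_{ij}(\mvgs)A_{ij}(\mvgt)=\langle\phi(\sigma_i)\phi(\tau_i)^{\intercal},\phi(\sigma_j)\phi(\tau_j)^{\intercal}\rangle_F$. The full double sum equals $N\norm{\sR}_F^2$, and the diagonal gives $N(\gk-1)^2$. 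Dividing by $N$ yields $\xi(\sR)$.
\item For~\eqref{eq:triple-edge}: use rank-one 3-tensors in the same way, with diagonal $(\gk-1)^3$.
\end{itemize}

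For~\eqref{eq:selfoverlap}, combine $A_{ij}^2=\gk A_{ij}+(\gk-1)$, which holds because $A\in\{-1,\gk-1\}$ is a root of $A^2-(\gk-2)A-(\gk-1)=0$; the correct version is therefore $A^2=(\gk-2)A+(\gk-1)$. Then
$$\tfrac1N\sum_{i<j}A_{ij}^2=\tfrac{\gk-2}{N}\sum_{i<j}A_{ij}+\tfrac{(\gk-1)(N-1)}{2}.$$
Substituting~\eqref{eq:vertex-edge} gives
$$\tfrac{\gk-2}{2}\big(\norm{m}^2-\gk+1\big)+\tfrac{(\gk-1)(N-1)}{2}=\tfrac{\gk-1}{2}(N-\gk+1)+\tfrac{\gk-2}{2}\norm{m}^2.$$
Multiplying by $\beta^2/2$ and using~\eqref{eq:pair-edge} with $\mvgt=\mvgs$ gives the claim, since $\alpha=(\gk-2)\beta^2/2$.

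The main computation is~\eqref{eq:triple-overlap-moment}. Expand
$$\norm{R^{(3)}}_F^2=\frac1N\sum_{i,j}\prod_{r=1}^3\phi(\sigma^r_i)^{\intercal}\phi(\sigma^r_j).$$
By independence, the expectation factorizes over $r$ for each fixed pair $(i,j)$. The diagonal terms $i=j$ give $(\gk-1)^3$ exactly. For $i\neq j$, the law of $(\sigma^r_i,\sigma^r_j)$ under the uniform measure on $\Sigma_N^{(\mvn_r)}$ does not depend on the pair, by exchangeability. Hence $c_r:=\E\big(A_{ij}(\mvgs^r)\big)$ is a constant. Averaging~\eqref{eq:vertex-edge}, where $m$ is deterministic on the profile class, gives
$$c_r\binom N2=\tfrac N2\big(\norm{m_{\mvn_r}}_2^2-\gk+1\big),\qquad\text{so}\qquad c_r=\frac{\norm{m_{\mvn_r}}^2-\gk+1}{N-1}.$$
The off-diagonal contribution is then $\tfrac1N\,N(N-1)\prod_r c_r$, which equals $\prod_r(\norm{m_{\mvn_r}}^2-\gk+1)/(N-1)^2$.

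The only real obstacle is recognising this exchangeability step, which lets the factorization go through without any explicit hypergeometric calculation. The condition $N\ge2$ is needed only to divide by $N-1$.
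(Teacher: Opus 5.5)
Your proof is correct and follows essentially the same route as the paper. Both arguments use the diagonal-extension device $\sum_{i<j}=\tfrac12\big(\sum_{i,j}-\sum_{i=j}\big)$ with tensor inner products, the quadratic relation $A_{ij}^2=(\gk-2)A_{ij}+\gk-1$ for the self-overlap, and exchangeability together with~\eqref{eq:vertex-edge} to get $\E(A_{ij})=(\norm{m_{\mvn}}_2^2-\gk+1)/(N-1)$ before factorizing the triple moment over the independent replicas. Just delete the stray first version $A_{ij}^2=\gk A_{ij}+(\gk-1)$, which is false and which you correct immediately afterwards.
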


\begin{proof}
	The identities~\eqref{eq:phi-prod} follow by expanding $\phi(x)=\sqrt\gk(e_x-u)$, and~\eqref{eq:phi-cov} follows from $\sum_xe_x=\vone$ and $\sum_xe_xe_x^{\intercal}=I$.
	For the sums over pairs, extend $A_{ij}(\mvgs):=\phi(\sigma_i)^{\intercal}\phi(\sigma_j)$ to all $i,j$, so that $A_{ii}(\mvgs)=\gk-1$.
	Summing~\eqref{eq:phi-prod} over all ordered pairs $(i,j)$ gives
	\begin{align*}
		\sum_{i,j}A_{ij}(\mvgs)=\norm{\textstyle\sum_i\phi(\sigma_i)}_2^2=N\norm{m(\mvgs)}_2^2,
	\end{align*}
	and in the same way
	\begin{align*}
		\sum_{i,j}A_{ij}(\mvgs)A_{ij}(\mvgt)
		=\norm{\textstyle\sum_i\phi(\sigma_i)\phi(\tau_i)^{\intercal}}_F^2
		=N\norm{\sR(\mvgs,\mvgt)}_F^2.
	\end{align*}
	Subtracting the diagonal contributions $N(\gk-1)$ and $N(\gk-1)^2$ and dividing by two proves~\eqref{eq:vertex-edge} and~\eqref{eq:pair-edge}.
	The two possible edge values give $A_{ij}^2=(\gk-2)A_{ij}+\gk-1$.
	Summing this identity over $i<j$ and applying~\eqref{eq:vertex-edge} and~\eqref{eq:pair-edge} gives~\eqref{eq:selfoverlap}.

	For the three-replica identity, the inner product of tensor products gives
	\begin{align*}
		N\norm{R^{(3)}(\mvgs,\mvgt,\mvgt')}_F^2
		=\sum_{i,j}A_{ij}(\mvgs)A_{ij}(\mvgt)A_{ij}(\mvgt').
	\end{align*}
	Subtracting the diagonal contribution $N(\gk-1)^3$ and dividing by $2N$ proves~\eqref{eq:triple-edge}.
	By exchangeability and~\eqref{eq:vertex-edge}, for $i\ne j$,
	\begin{align*}
		\E_{\mvgs}^{(\mvn)}\big(A_{ij}(\mvgs)\big)
		=\frac{\norm{m_{\mvn}}_2^2-\gk+1}{N-1}.
	\end{align*}
	Taking expectations in~\eqref{eq:triple-edge} and using the independence of the three configurations proves~\eqref{eq:triple-overlap-moment}.
\end{proof}

\begin{lem}[The Gaussian kernel]\label{lem:Zsecond}
	Let $K_0\sim\N(0,\Pi\otimes\Pi)$. For $0\leq s<1$, $\E\big(e^{s\xi(K_0)}\big)=e^{\varphi_\gk(s)}$. Moreover, for bounded measurable $f$ and $0<\beta^2<1$,
	\begin{align}
		e^{-\varphi_\gk(\beta^2)}\E\big(e^{\beta^2\xi(K_0)}f(K_0)\big)=\E\left(f\Big(\frac{K_0}{\sqrt{1-\beta^2}}\Big)\right).
		\label{eq:gaussian-tilt}
	\end{align}
\end{lem}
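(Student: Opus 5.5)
The idea is to diagonalize $K_0$. Since $\Pi\otimes\Pi$ is an orthogonal projection of rank $(\gk-1)^2$ on the space of $\gk\times\gk$ matrices with the Frobenius inner product, $K_0$ is a standard Gaussian vector on the subspace $\cV\otimes\cV$. Concretely, pick an orthonormal basis $E_1,\dots,E_d$ of $\cV\otimes\cV$, where $d=(\gk-1)^2$; for instance $E_{ab}=v_av_b^{\intercal}$ with $(v_a)$ an orthonormal basis of $\cV$. Then we can write $K_0=\sum_{k=1}^d\zeta_kE_k$ with $\zeta_k$ i.i.d.\ $\N(0,1)$. It follows that $\norm{K_0}_F^2=\sum_k\zeta_k^2\sim\chi^2_d$ and
\[
\xi(K_0)=\tfrac12\sum_{k=1}^d(\zeta_k^2-1).
\]

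For the first claim, we use that for a single standard normal $\zeta$ and $0\le s<1$,
\[
\E\big(e^{s(\zeta^2-1)/2}\big)=e^{-s/2}(1-s)^{-1/2}=\exp\big(-\tfrac12(s+\ln(1-s))\big)=e^{\varphi(s)}.
\]
Independence across the $d$ coordinates then gives $\E(e^{s\xi(K_0)})=e^{d\varphi(s)}=e^{\varphi_\gk(s)}$.

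For \eqref{eq:gaussian-tilt}, we compute the tilted law directly. The density of $(\zeta_k)$ is proportional to $e^{-\sum_k\zeta_k^2/2}$. Multiplying it by $e^{\beta^2\xi}$ gives a density proportional to $e^{-(1-\beta^2)\sum_k\zeta_k^2/2}$, because the constant factor $e^{-\beta^2 d/2}$ is absorbed into the normalization. The normalizing constant of this tilt is exactly $\E(e^{\beta^2\xi(K_0)})=e^{\varphi_\gk(\beta^2)}$ by the first part, which requires $\beta^2<1$. So under the tilted and normalized measure the $\zeta_k$ are i.i.d.\ $\N(0,(1-\beta^2)^{-1})$. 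Hence the tilted law of $K_0=\sum_k\zeta_kE_k$ is the law of $K_0/\sqrt{1-\beta^2}$. Integrating a bounded measurable $f$ against both sides gives the identity.

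There is no real obstacle here; the only point needing care is the identification of $K_0$ as standard Gaussian on the rank-$(\gk-1)^2$ subspace. This follows from $\Pi\otimes\Pi$ being an orthogonal projection, as noted after \eqref{def:phi}.
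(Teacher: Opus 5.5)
Your proposal is correct and matches the paper's proof. Both identify $K_0$ as a standard Gaussian on the $(\kappa-1)^2$-dimensional range of $\Pi\otimes\Pi$, read off the first claim from the chi-squared moment generating function, and observe that the tilt by $e^{\beta^2\norm{K_0}_F^2/2}$ changes the covariance on that range to $(1-\beta^2)^{-1}$ times the identity. Your explicit orthonormal basis $E_{ab}=v_av_b^{\intercal}$ only writes out in coordinates what the paper states in two lines.
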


\begin{proof}
	The Gaussian $K_0$ is standard on the range of $\Pi\otimes\Pi$, which has dimension $(\gk-1)^2$. So $\norm{K_0}_F^2$ has the chi-squared law with $(\gk-1)^2$ degrees of freedom, and its moment generating function gives
	\begin{align*}
		\E\big(e^{s\xi(K_0)}\big)=e^{-s(\gk-1)^2/2}(1-s)^{-(\gk-1)^2/2}=e^{\varphi_\gk(s)}.
	\end{align*}
	Multiplying the density of $K_0$ by $e^{\beta^2\norm{K_0}_F^2/2}$ changes its covariance on this range from the identity to $(1-\beta^2)^{-1}$ times the identity. This gives the second assertion.
\end{proof}

\subsection{The profile overlap central limit theorem}

In this subsection we prove a central limit theorem for the overlap of independent uniform configurations whose profiles lie in a bounded magnetization window.
\begin{lem}[Profile-constrained overlaps]\label{lem:profile-clt}
	Fix $L>0$. Let $\mvn_N,\mvn'_N,\mvn''_N$ be profile sequences whose magnetizations have norm at most $L$, and let $\mvgs,\mvgt,\mvgt'$ be independent uniform configurations with these profiles. Then
	\begin{align*}
		\big(\sR(\mvgs,\mvgt),\sR(\mvgs,\mvgt'),\sR(\mvgt,\mvgt')\big)
		\Longrightarrow (K_0,K_0',K_0''),
	\end{align*}
	where $K_0,K_0',K_0''\sim\N(0,\Pi\otimes\Pi)$ are independent. In particular, the limit does not depend on the profiles.
\end{lem}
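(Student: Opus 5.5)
By exchangeability it suffices to work with i.i.d.\ uniform colors and then condition on the profiles. Let $\mvgs,\mvgt,\mvgt'$ be independent with i.i.d.\ uniform coordinates on $[\gk]$. Consider the summands
\[
V_i:=\big(\phi(\sigma_i),\phi(\tau_i),\phi(\tau_i'),\phi(\sigma_i)\phi(\tau_i)^{\intercal},\phi(\sigma_i)\phi(\tau'_i)^{\intercal},\phi(\tau_i)\phi(\tau'_i)^{\intercal}\big).
\]
The normalized sum $N^{-1/2}\sum_iV_i$ is $\big(m(\mvgs),m(\mvgt),m(\mvgt'),\sR(\mvgs,\mvgt),\sR(\mvgs,\mvgt'),\sR(\mvgt,\mvgt')\big)$.

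By \eqref{eq:phi-cov} and independence, the blocks of $V_i$ are centered and pairwise uncorrelated. For example,
\[
\E\big(\phi(\sigma)\phi(\tau)^{\intercal}\otimes\phi(\sigma)\big)=\E\big(\phi(\sigma)\otimes\phi(\sigma)\big)\otimes\E\phi(\tau)=0,
\qquad
\E\big(\phi(\sigma)\phi(\tau)^{\intercal}\otimes\phi(\sigma)\phi(\tau')^{\intercal}\big)=0.
\]
The diagonal blocks have covariances $\Pi$ and $\Pi\otimes\Pi$. The multivariate CLT therefore gives joint convergence to independent $g_0,g_0',g_0''\sim\N(0,\Pi)$ and $K_0,K_0',K_0''\sim\N(0,\Pi\otimes\Pi)$. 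The content of the lemma is that this independence survives conditioning on the lattice event $\{n(\mvgs)=\mvn_N,\ n(\mvgt)=\mvn_N',\ n(\mvgt')=\mvn_N''\}$.

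To make the conditioning rigorous, I would prove a mixed local limit theorem by Fourier inversion in the lattice variables. Write $S=(n(\mvgs),n(\mvgt),n(\mvgt'))$ restricted to the first $\gk-1$ coordinates of each count vector; this is a sum of i.i.d.\ lattice vectors in $\dZ^{3(\gk-1)}$ whose support generates the lattice, so the walk is aperiodic. For fixed $t$ and the target $s_N$, write
\[
\E\big(e^{i\langle t,\sR\rangle}\ind\{S=s_N\}\big)
=(2\pi)^{-3(\gk-1)}\int_{[-\pi,\pi]^{3(\gk-1)}}\E\big(e^{i\langle t,\sR\rangle+i\langle\theta,S-s_N\rangle}\big)\,d\theta .
\]
Substitute $\theta=\vartheta/\sqrt N$ and split the range into three parts.
\begin{enumeratei}
\item On $\norm{\vartheta}\le A$, the joint CLT for the i.i.d.\ sums gives convergence of the integrand to $e^{-Q(t)/2}e^{-\frac12\vartheta^{\intercal}\Sigma\vartheta-i\langle\vartheta,\bar s\rangle}$. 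The lattice and overlap parts factorize because the limits are uncorrelated. Here $\bar s$ is the bounded rescaled centered target, kept bounded by $\norm{m_{\mvn}}_2\le L$, and $Q$ is the covariance form of $(K_0,K_0',K_0'')$.
\item On $A\le\norm{\vartheta}\le\delta\sqrt N$, the integrand is not a product of i.i.d.\ characteristic functions, because $\sR$ couples to $\theta$. Instead, expand the single-site characteristic function of $(\theta,t/\sqrt N)$ to second order. This gives the bound $\abs{\cdot}\le e^{-c\norm{\vartheta}^2}$, uniformly for $t$ in compacts.
\item On $\norm{\theta}\ge\delta$, aperiodicity makes the single-site characteristic function at $(\theta,t/\sqrt N)$ bounded away from $1$ in modulus, uniformly for large $N$. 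The contribution is then $O(N^{3(\gk-1)/2}\rho^N)$ with $\rho<1$, which is negligible.
\end{enumeratei}
Dividing by the same formula at $t=0$, which is the local CLT for $\pr(S=s_N)$, yields $\E\big(e^{i\langle t,\sR\rangle}\mid S=s_N\big)\to e^{-Q(t)/2}$. This is convergence of conditional characteristic functions, hence the claimed joint limit. Because the limit involves no profile dependence, it is uniform over the window.

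The main obstacle is the intermediate region (ii): one must verify that the quadratic form coming from the covariance of $(\langle\theta,\cdot\rangle$-part, $\langle t,\cdot\rangle/\sqrt N$-part$)$ stays uniformly positive definite after the cross terms vanish. This should follow from the exact zero cross-covariance computed above. If this step proved delicate, the fallback is the method of moments. Fix $\mvgs$ deterministically, which is allowed by exchangeability, and let $\mvgt,\mvgt'$ be uniform rearrangements of fixed strings. Mixed moments of the entries of the three overlaps then become sums over index tuples with sampling-without-replacement weights, and these deviate from the i.i.d.\ weights by $O(1/N)$ per coincidence. Since all variables are bounded, the Wick pairings reproduce the moments of independent $\N(0,\Pi\otimes\Pi)$ limits, and these moments determine the Gaussian law.
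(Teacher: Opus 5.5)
Your proposal is correct, but it takes a genuinely different route from the paper.

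\textbf{The paper's route.} The paper never removes the profile constraint. It writes $\sigma_i=w_{\pi(i)}$ for a fixed word $w$ and a uniform permutation $\pi$. Conditionally on $\mvgt$, the quantity $\langle s,\sR(\mvgs,\mvgt)\rangle_F$ is then a combinatorial statistic in Hoeffding's sense, and after double centering the paper applies Bolthausen's Berry--Esseen bound. The conditional variance tends to the deterministic value $\norm{\Pi s\Pi}_F^2$. For three replicas, $T_1+T_2$ is a single permutation statistic given $(\mvgt,\mvgt')$. Its cross term in the conditional variance is governed by $N^{-1/2}\sR(\mvgt,\mvgt')\to0$. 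Finally, $e^{\mathrm{i}T_3}$ is $(\mvgt,\mvgt')$-measurable and factors out.

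\textbf{Your route.} You drop the constraint entirely. The i.i.d.\ CLT gives independent Gaussian limits, because the overlap blocks are exactly uncorrelated with the count vectors. You then reinstate the constraint with a mixed local limit theorem.

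\textbf{What each buys.} Your approach explains conceptually why the limit is profile-free. It handles all three overlaps, and indeed any finite family of such bilinear fields, in one computation. It is uniform over the window because everything depends on the profiles only through the bounded $\bar s_N$. The price is lattice Fourier analysis in dimension $3(\gk-1)$ together with an aperiodicity check. The paper's approach uses an off-the-shelf combinatorial CLT with a rate and needs no local limit theory.

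\textbf{Small corrections.}
\begin{enumerate}
\item In region (ii) the integrand \emph{is} the $N$-th power of the joint single-site characteristic function $\psi(\theta,t/\sqrt N)$, since each site contributes to both $S$ and $\sR$. What fails is only its factorization into a $\theta$-part and a $t$-part.
\item The step you flag as the main obstacle is not one. After centering the lattice part,
\begin{align*}
\abs{\psi(\theta,s)}\leq 1-\tfrac12\theta^{\intercal}\Sigma\theta-\tfrac12\var(\langle s,O\rangle)+C\big(\norm{\theta}^3+\norm{s}^3\big),
\end{align*}
where $O$ is the single-site overlap block. The cross covariance is exactly zero, and the nonnegative $s$-variance term can simply be dropped. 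So you only need positive definiteness of the multinomial covariance $\Sigma$ on the $\gk-1$ free coordinates, together with $\norm{s}^3=O(N^{-3/2})$.
\item Your fallback moment argument, as phrased, undercounts. Under sampling without replacement, every pair of distinct sites carries an $O(1/N)$ correlation (about $-\Pi/(N-1)$). There are order $N^2$ such pairs against a normalization of $N$, so these corrections are not small by counting alone. They vanish only because of the cancellation $\sum_i\phi(\sigma_i)=\sqrt N\,m_{\mvn_N}=O(\sqrt N)$ coming from the bounded magnetization. Since the Fourier argument already works, you do not need this fallback.
\end{enumerate}
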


The proof represents the overlap as a \emph{combinatorial statistic} in the sense of Hoeffding~\cite{Hoe51}. We recall the following central limit theorem.
Call a real array $(c(i,j))_{i,j=1}^N$ \emph{doubly centered} if all of its row sums and all of its column sums vanish.

\begin{lem}[Combinatorial central limit theorem~\cite{Bol84}]\label{lem:bolthausen}
	Let $N\geq2$, let $(c(i,j))_{i,j=1}^{N}$ be a doubly centered real array, let $\pi$ be a uniform random permutation of $[N]$, and set $S := \sum_{i=1}^{N} c(i,\pi(i))$. Then
	\begin{align*}
		\E_\pi(S) = 0\quad\text{and}\quad v := \var_\pi(S) = \frac{1}{N-1}\sum_{i,j} c(i,j)^2.
	\end{align*}
	If $v > 0$, then for an absolute constant $C$ and $\zeta \sim \N(0,1)$,
	\begin{align}
		\sup_{x \in \dR} \Big\lvert \pr_\pi\big( S/\sqrt v \leq x \big) - \pr\big( \zeta \leq x \big) \Big\rvert \leq \frac{C}{N v^{3/2}} \sum_{i,j} \abs{c(i,j)}^3.
		\label{eq:comb-be}
	\end{align}
\end{lem}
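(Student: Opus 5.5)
The identities for the mean and the variance are direct computations with the uniform permutation. For the mean, $\pi(i)$ is uniform on $[N]$ for each $i$, so $\E_\pi\big(c(i,\pi(i))\big)=N^{-1}\sum_jc(i,j)=0$ because the row sums vanish.

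For the variance, expand $\E_\pi(S^2)$ into diagonal and off-diagonal terms. The diagonal terms contribute $N^{-1}\sum_{i,j}c(i,j)^2$. For $i\ne k$, the pair $(\pi(i),\pi(k))$ is uniform on ordered pairs of distinct elements. The vanishing row sums give
\[
\E_\pi\big(c(i,\pi(i))c(k,\pi(k))\big)=\frac{1}{N(N-1)}\sum_{j\ne l}c(i,j)c(k,l)=-\frac{1}{N(N-1)}\sum_jc(i,j)c(k,j).
\]
Summing over $k\ne i$ and using the vanishing column sums, $\sum_{k\ne i}c(k,j)=-c(i,j)$, turns the off-diagonal part into $\frac{1}{N(N-1)}\sum_{i,j}c(i,j)^2$. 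Adding the two parts gives $v=(N-1)^{-1}\sum_{i,j}c(i,j)^2$.

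For the Berry--Esseen bound~\eqref{eq:comb-be}, I would use Stein's method with the exchangeable pair $\pi':=\pi\circ(I\,J)$. Here $I,J$ are independent and uniform on $[N]$, and $(I\,J)$ is the transposition, read as the identity when $I=J$. Then
\[
S'-S=c(I,\pi(J))+c(J,\pi(I))-c(I,\pi(I))-c(J,\pi(J)).
\]
Double centering gives $\sum_{i,j}c(i,\pi(j))=0$, and hence the exact linear regression $\E(S'-S\mid\pi)=-\tfrac2N S$. Stein's identity with $\lambda=2/N$ then bounds the distance from normality by two terms. The first is the fluctuation of $\frac{N}{4v}\E\big((S'-S)^2\mid\pi\big)$ around $1$. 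The second is the third-moment term $\frac{N}{v^{3/2}}\E\abs{S'-S}^3$. The conditional second moment is an average of a four-index array over $\pi$, and its variance can be computed as in the variance step above. The third-moment term is bounded directly by $C\sum_{i,j}\abs{c(i,j)}^3/(Nv^{3/2})$.

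The main obstacle is the Kolmogorov distance with the optimal Lyapunov rate $\sum\abs{c}^3/(Nv^{3/2})$. The plain exchangeable-pair argument gives this rate only for smooth test functions, and yields a square root of it for indicators. To obtain the sharp rate I would follow Bolthausen's argument~\cite{Bol84}. One smooths the indicator in the Stein equation, bounds the resulting concentration term $\pr(x\le S\le x+\eps)$ by conditioning on the values of $\pi$ at a few indices, and closes the estimate by induction on $N$ applied to the reduced combinatorial statistic. The reduced array must be recentered to stay doubly centered; this changes its variance and third absolute moment only by controlled amounts. A zero-bias coupling in the style of Chen and Fang is an alternative route.

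For the uses in this paper, only the qualitative consequence matters: $S/\sqrt v\Rightarrow\N(0,1)$ whenever the right side of~\eqref{eq:comb-be} tends to zero. The smooth-function version from the exchangeable pair already gives this, so that weaker version would suffice as a fallback.
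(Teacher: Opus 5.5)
The paper gives no proof of this lemma; it quotes it from Bolthausen~\cite{Bol84}. Your computations of $\E_\pi(S)$ and $\var_\pi(S)$ are correct. So are the regression identity $\E(S'-S\mid\pi)=-\tfrac{2}{N}S$ for $\pi'=\pi\circ(I\,J)$ and the bound $\E\abs{S'-S}^3\le CN^{-2}\sum_{i,j}\abs{c(i,j)}^3$. Deferring the Kolmogorov estimate to Bolthausen's smoothing-and-induction argument is what the paper does by citing him.

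One claim in your sketch is wrong, and it undercuts your fallback. When you expand the conditional second moment, four of the six cross terms vanish by the double centering, and
\[
\frac{N}{4v}\,\E\big((S'-S)^2\mid\pi\big)=\frac{N-1}{2N}+\frac{1}{2v}\sum_{i}c(i,\pi(i))^2+\frac{1}{2Nv}\sum_{i,j}c(i,\pi(j))\,c(j,\pi(i))+\frac{S^2}{2Nv}.
\]
The second term is a combinatorial statistic of the array $(c(i,j)^2)$. Its variance is at most, and can be of the order of, $\sum_{i,j}c(i,j)^4/(4(N-1)v^2)$. Controlling this term through its variance, as you propose, therefore gives a fourth-moment error rather than the Lyapunov ratio, even for smooth test functions. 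By Cauchy--Schwarz, $\sum_{i,j}\abs{c(i,j)}^3\le\big((N-1)v\sum_{i,j}c(i,j)^4\big)^{1/2}$. So the fourth-moment quantity dominates the Lyapunov ratio, not the reverse. It can stay of order one while the right side of~\eqref{eq:comb-be} tends to zero: for instance, add a doubly centered $2\times2$ block with entries $\pm N^{1/4}$ to a doubly centered array with entries $\pm N^{-1/2}$. Hence your fallback does not give $S/\sqrt v\Rightarrow\N(0,1)$ whenever the Lyapunov ratio vanishes. For that you need Bolthausen's theorem, or at least an $L^1$ truncation of this term. The fallback does suffice for the paper's applications in Lemma~\ref{lem:profile-clt}. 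There the array is $d(i,j)/\sqrt N$ with $d$ bounded and $v$ bounded below, so both quantities are $\cO(N^{-1/2})$. Also, Chen and Fang obtain the sharp Kolmogorov bound with exchangeable pairs and a concentration inequality, not with zero bias; zero-bias couplings for this statistic go back to Goldstein.
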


\begin{proof}[Proof of Lemma~\ref{lem:profile-clt}]
	The idea is as follows. Conditionally on $\mvgt$, the overlap $\sR(\mvgs,\mvgt)$ is a linear statistic of a uniform permutation, to which Lemma~\ref{lem:bolthausen} applies. Its conditional variance depends on $\mvgt$ only through its profile, up to a small error. For three replicas, the conditional covariance of $\sR(\mvgs,\mvgt)$ and $\sR(\mvgs,\mvgt')$ is governed by $N^{-1/2}\sR(\mvgt,\mvgt')\to0$, which gives asymptotic independence.

	\smallskip\noindent\textbf{Step 1 (permutation representation).} Fix a word $w$ with profile $\mvn_N$ and represent $\sigma_i=w_{\pi(i)}$, where $\pi$ is a uniform permutation independent of $(\mvgt,\mvgt')$. This represents the uniform law on $\Sigma_N^{(\mvn_N)}$, since every configuration of that profile arises from exactly $\prod_xn_x!$ permutations. For a fixed matrix $s$,
	\begin{align*}
		\langle s,\sR(\mvgs,\mvgt)\rangle_F
		=N^{-1/2}\sum_i c(i,\pi(i)),\quad
		c(i,j):=\phi(w_j)^{\intercal}s\phi(\tau_i).
	\end{align*}
	Write $c_{i\cdot},c_{\cdot j},\bar c$ for the row, column, and overall means, and define the doubly centered array
	\begin{align*}
		d(i,j):=c(i,j)-c_{i\cdot}-c_{\cdot j}+\bar c
		= \left(\phi(w_j)- \frac{m_{\mvn_N}}{\sqrt{N}}\right)^{\intercal}s\left(\phi(\tau_i)- \frac{m_{\mvn'_N}}{\sqrt{N}}\right).
	\end{align*}
	Then
	\begin{align*}
		\langle s,\sR(\mvgs,\mvgt)\rangle_F
		=N^{-1/2}\sum_i \left( d(i,\pi(i)) + c_{i \cdot} + c_{\cdot \pi(i)} - \bar c \right)
		=N^{-1/2}\sum_i d(i,\pi(i)) + N^{1/2} \bar c.
	\end{align*}
	Since $N\bar c=m_{\mvn_N}^{\intercal}s\, m_{\mvn'_N}$, the last term is $\cO(N^{-1/2})$ uniformly on the window.

	\smallskip\noindent\textbf{Step 2 (two replicas).}
	Set $a_j :=\phi(w_j)- {m_{\mvn_N}}/{\sqrt{N}}$ and $b_i := \phi(\tau_i)- {m_{\mvn'_N}}/{\sqrt{N}}$. Then
	\begin{align*}
		d(i,j)^2 = (a_j^{\intercal}sb_i)(b_i^{\intercal}s^{\intercal}a_j)
		= \tr(s^{\intercal}a_ja_j^{\intercal}sb_ib_i^{\intercal}).
	\end{align*}
	It follows that
	\begin{align*}
		\frac{1}{N(N-1)}\sum_{i,j} d(i,j)^2 = \frac{N}{N-1} \tr \left[ s^{\intercal}\left(\frac{1}{N} \sum_j a_j a_j^{\intercal}\right)s\left(\frac{1}{N} \sum_i b_i b_i^{\intercal}\right)\right].
	\end{align*}
	Let $p = \mvn_N / N$. Using $p = u + (\gk N)^{-1/2}m_{\mvn_N}$ and $m_{\mvn_N} = \cO(1)$, we get
	\begin{align*}
		\frac{1}{N} \sum_j a_j a_j^{\intercal} = \sum_{x=1}^\gk p_x \phi(x) \phi(x)^{\intercal} - \frac{1}{N} m_{\mvn_N} m_{\mvn_N}^{\intercal}
		\to \Pi.
	\end{align*}
	The same approximation applies to $b_i$, and we obtain the limit of the conditional variance:
	\begin{align*}
		\var\Big( N^{-1/2}\sum_i d(i,\pi(i)) \,\Big|\, \mvgt \Big)=\frac1{N(N-1)}\sum_{i,j}d(i,j)^2
		\longrightarrow \tr(s^{\intercal}\Pi s\Pi)=\norm{\Pi s\Pi}_F^2.
	\end{align*}
	This limit is deterministic, since it depends on $\mvgt$ only through its profile. The arrays are uniformly bounded. If the limiting variance is positive, we apply Lemma~\ref{lem:bolthausen} to the array $d(i,j)/\sqrt N$. Since $\sum_{i,j}\abs{d(i,j)}^3=\cO(N^2)$, the bound~\eqref{eq:comb-be} is $\cO(N^{-1/2})$. If the limiting variance is zero, Chebyshev's inequality applies. Since $\norm{\Pi s\Pi}_F^2=\var(\langle s,K_0\rangle_F)$, the Cram\'er--Wold device proves the two-replica assertion, uniformly along profile sequences in the window.

	\smallskip\noindent\textbf{Step 3 (three replicas).} Fix matrices $s_1,s_2,s_3$ and write
	\begin{align*}
		T_1:=\langle s_1,\sR(\mvgs,\mvgt)\rangle_F,\quad
		T_2:=\langle s_2,\sR(\mvgs,\mvgt')\rangle_F,\quad
		T_3:=\langle s_3,\sR(\mvgt,\mvgt')\rangle_F.
	\end{align*}
	Conditionally on $(\mvgt,\mvgt')$, $T_1+T_2$ is a single permutation statistic:
	\begin{align*}
		T_1 + T_2
		=N^{-1/2}\sum_i c'(i,\pi(i)),
		\quad c'(i,j):=\phi(w_j)^{\intercal}\big(s_1\phi(\tau_i)+s_2\phi(\tau'_i)\big).
	\end{align*}
	As in Steps 1 and 2, define the doubly centered array
	\begin{align*}
		d'(i,j):=c'(i,j)-c'_{i\cdot}-c'_{\cdot j}+\bar c'
		= \left(\phi(w_j)- \frac{m_{\mvn_N}}{\sqrt{N}}\right)^{\intercal} \left[s_1\left(\phi(\tau_i)- \frac{m_{\mvn'_N}}{\sqrt{N}}\right) + s_2\left(\phi(\tau'_i)- \frac{m_{\mvn''_N}}{\sqrt{N}}\right)\right].
	\end{align*}
	Then
	\begin{align*}
		T_1 + T_2
		=N^{-1/2}\sum_i d'(i,\pi(i)) + \cO(N^{-1/2}).
	\end{align*}
	Recall $a_j$ and $b_i$ from Step 2 and set $b_i' := \phi(\tau_i')- {m_{\mvn''_N}}/{\sqrt{N}}$. Then
	\begin{align*}
		d'(i,j)^2
		= \tr(s_1^{\intercal}a_ja_j^{\intercal}s_1b_ib_i^{\intercal})
		+\tr(s_2^{\intercal}a_ja_j^{\intercal}s_2b_i'(b_i')^{\intercal})
		+2\tr(s_2^{\intercal}a_ja_j^{\intercal}s_1b_i(b_i')^{\intercal}).
	\end{align*}
	Moreover, $N^{-1}\sum_i b_i (b_i')^{\intercal} =N^{-1/2}\sR(\mvgt,\mvgt')+ \cO(1/N)$, which tends to zero in probability by the two-replica assertion. Hence
	\begin{align*}
		\var \big(T_1 + T_2 \,\big|\, \mvgt, \mvgt' \big) \longrightarrow \norm{\Pi s_1\Pi}_F^2 + \norm{\Pi s_2\Pi}_F^2\quad\text{in probability}.
	\end{align*}
	If this limit is positive, Lemma~\ref{lem:bolthausen} applies on an event where the conditional variance is bounded below, and the complement of this event has probability tending to zero. The conditional characteristic function of $T_1+T_2$ therefore converges in probability, and in $L^1$ by boundedness, to the deterministic Gaussian characteristic function. If the limit vanishes, the same conclusion follows from the conditional second moment. Since $T_3$ is measurable with respect to $(\mvgt,\mvgt')$,
	\begin{align*}
		\E\big(e^{\mathrm{i}(T_1+T_2+T_3)}\big)
		=\E\Big(e^{\mathrm{i} T_3}\,\E\big(e^{\mathrm{i}(T_1+T_2)}\,\big|\,\mvgt,\mvgt'\big)\Big),
	\end{align*}
	and this factorizes in the limit into the product of the two limiting Gaussian characteristic functions. The two-replica result for $T_3$ and the Cram\'er--Wold device complete the proof.
\end{proof}

\subsection{Type probabilities and exponential moments}

We first estimate pair-type probabilities for uniform configurations with fixed profiles. Combined with the entropy bound of Lemma~\ref{lem:near-uniform}, these estimates give uniform exponential moments of the overlap on bounded magnetization windows and identify their limits.

Throughout the article, $D(q  \|  \pi) := \sum_{x \in \cX} q(x)\ln\frac{q(x)}{\pi(x)} \in [0,\infty]$ denotes the relative entropy of a probability measure $q$ on a finite set $\cX$ with respect to a probability measure $\pi$.
Let $\bar u:=u\otimes u=\gk^{-2}\vone\vone^{\intercal}$ be the uniform law on $[\gk]^2$.

A probability measure $q$ on $[\gk]^2$ is identified with the matrix $(q_{xy})_{x,y\in[\gk]}$, and we write
\begin{align}
	q^{(1)}_x:=\sum_y q_{xy},\quad
	q^{(2)}_y:=\sum_x q_{xy},\quad
	\hat q:=\Pi q\Pi,
	\label{eq:marginals}
\end{align}
for its two marginals and its doubly centered part. Thus $q^{(1)}=q^{(2)}=u$ says exactly that $\gk q$ is doubly stochastic, and in that case $\hat q=q-\bar u$.

\begin{lem}[Profile-constrained type probabilities]\label{lem:profile-types}
	Let $p=\mvn/N$ and $p'=\mvn'/N$. Every realizable pair type $q$ has margins $(q^{(1)},q^{(2)})=(p,p')$, and
	\begin{align}
		\pr_{\mvgs,\mvgt}^{(\mvn),(\mvn')}
			\big(p(\mvgs,\mvgt)=q\big)
		&=\frac{\prod_y\binom{n'_y}{(Nq_{xy})_x}}{\binom N{(n_x)_x}}
		\leq(N+1)^\gk e^{-ND(q\|p\otimes p')}.
		\label{eq:profile-types}
	\end{align}
	On the \emph{core} $\{q:q_{xy}\geq1/(2\gk^2)\text{ for all }x,y\}$ the sharper bound
	\begin{align}
		\pr_{\mvgs,\mvgt}^{(\mvn),(\mvn')}
			\big(p(\mvgs,\mvgt)=q\big)
		\leq C N^{-(\gk-1)^2/2}e^{-ND(q\|p\otimes p')}
		\label{eq:profile-types-sharp}
	\end{align}
	holds.
\end{lem}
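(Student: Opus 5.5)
The plan has three parts: derive the exact formula by counting, bound it by a type-class estimate, and sharpen that estimate on the core with Stirling's formula.

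\textbf{Exact formula.} The margin claim is immediate: summing $p_{xy}(\mvgs,\mvgt)$ over $y$ counts sites with $\sigma_i=x$, which is $n_x/N$, and likewise for the other margin. For the exact formula, condition on $\mvgt$; by exchangeability the answer does not depend on which $\mvgt$ is chosen. For each color $y$, the $n'_y$ sites where $\tau_i=y$ must receive exactly $Nq_{xy}$ copies of each color $x$ from $\mvgs$. The number of such $\mvgs$ is $\prod_y\binom{n'_y}{(Nq_{xy})_x}$, and the column-sum constraint automatically gives $\mvgs$ the profile $\mvn$. Dividing by $\abs{\Sigma_N^{(\mvn)}}=\binom N{(n_x)_x}$ gives the equality in~\eqref{eq:profile-types}.

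\textbf{Crude bound.} Use the standard type estimates
\[
(N+1)^{-\gk}e^{NH(p)}\le\binom N{(Np_x)_x}\le e^{NH(p)},
\]
with $H$ the Shannon entropy of a law on $[\gk]$, together with $\binom{n'_y}{(k_x)}\le e^{n'_yH(q_{\cdot y}/p'_y)}$. Summing $n'_yH(q_{\cdot y}/p'_y)$ over $y$ gives $N\big(H(q)-H(p')\big)$. The exponent therefore becomes $N\big(H(q)-H(p')-H(p)\big)$, which equals $-ND(q\|p\otimes p')$ because the margins of $q$ are $p$ and $p'$. Only the denominator contributes the polynomial loss, which is $(N+1)^\gk$.

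\textbf{Sharp bound on the core.} Apply Stirling's formula in the form
\[
\binom{n}{(k_x)}=\Theta\!\Big(\sqrt{\tfrac{n}{\prod_x k_x}}\Big)(2\pi)^{-(r-1)/2}e^{nH(k/n)},
\]
valid with absolute two-sided constants when all $k_x\ge1$. On the core every $Nq_{xy}\ge N/(2\gk^2)$, so $n'_y\ge N/(2\gk)$.
\begin{itemize}
\item[\textbullet] The numerator is at most $C\prod_y\big(n'_y{}^{1/2}\prod_x(Nq_{xy})^{-1/2}\big)e^{N(H(q)-H(p'))}$, which is $C N^{\gk/2-\gk^2/2}e^{N(H(q)-H(p'))}$.
\item[\textbullet] The denominator needs a lower bound. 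Here we must also handle $p$ possibly having small entries, but the core forces $n_x\ge N/(2\gk)$. Hence the denominator is at least $cN^{1/2-\gk/2}e^{NH(p)}$.
\end{itemize}
The resulting power of $N$ is $\gk/2-\gk^2/2-1/2+\gk/2=-(\gk-1)^2/2$, giving~\eqref{eq:profile-types-sharp}.

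The main (mild) obstacle is keeping the Stirling constants uniform. This is handled by the lower bounds on all counts that the core provides. If $Nq_{xy}$ were allowed to be zero, one would use $0!=1$ and the convention $0\ln0=0$, but that case is only needed for the crude bound.
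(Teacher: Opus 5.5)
Your proposal is correct and follows the paper's proof essentially step for step. It uses the same count of $\mvgs$ within the color classes of a fixed $\mvgt$, the same method-of-types bounds combined with the conditional-entropy identity $\sum_y n'_y H(q_{\cdot y}/p'_y)-NH(p)=-ND(q\|p\otimes p')$, and the same two-sided Stirling estimates on the core, where all counts are positive, with the matching power count $\gk(1-\gk)/2+(\gk-1)/2=-(\gk-1)^2/2$.
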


\begin{proof}
	Fix $\mvgt$. The configurations $\mvgs\in\Sigma_N^{(\mvn)}$ with pair type $q$ are obtained by choosing, within each color class $\{i:\tau_i=y\}$ of $\mvgt$, which sites receive each color of $\mvgs$, with prescribed counts $(Nq_{xy})_x$. Dividing this count by $\abs{\Sigma_N^{(\mvn)}}=\binom N{(n_x)_x}$ gives the exact ratio. Write $\cS(p):=-\sum_xp_x\ln p_x$. The multinomial bounds of the method of types~\cite[Lemma 2.1.8]{DZ98},
	\begin{align*}
		(n+1)^{-\gk}e^{n\cS(a/n)}\leq\binom n{(a_x)_x}\leq e^{n\cS(a/n)},
	\end{align*}
	and the identity
	\begin{align}
		\sum_y n'_y\cS((Nq_{xy}/n'_y)_x)-N\cS(p)
		&=-N\sum_y p'_y\sum_x\frac{q_{xy}}{p'_y}\ln\frac{q_{xy}}{p'_y}
		+N\sum_{x,y}q_{xy}\ln p_x\nonumber\\
		&=-ND(q\|p\otimes p')
		\label{eq:profile-type-entropy}
	\end{align}
	prove~\eqref{eq:profile-types}.
	On the core, each $Nq_{xy}$ is a positive integer. Stirling's bounds
	\begin{align*}
		\sqrt{2\pi n}\,(n/e)^n\leq n!\leq\sqrt{2\pi n}\,(n/e)^n e^{1/(12n)}
	\end{align*}
	yield
	\begin{align*}
		\binom{n'_y}{(Nq_{xy})_x}
		&\leq e^{1/12}(2\pi)^{-(\gk-1)/2}(n'_y)^{-(\gk-1)/2}
		\left(\prod_x\frac{Nq_{xy}}{n'_y}\right)^{-1/2}
		e^{n'_y\cS((Nq_{xy}/n'_y)_x)}\\
		&\leq C N^{-(\gk-1)/2}e^{n'_y\cS((Nq_{xy}/n'_y)_x)}.
	\end{align*}
	Here we used $Nq_{xy}/n'_y\geq q_{xy}\geq1/(2\gk^2)$ and $n'_y=N\sum_xq_{xy}\geq N/(2\gk)$ on the core. The same computation, with the two Stirling bounds interchanged, bounds the denominator in~\eqref{eq:profile-types} from below:
	\begin{align*}
		\binom N{(n_x)_x}\geq cN^{-(\gk-1)/2}e^{N\cS(p)}.
	\end{align*}
	Combining these bounds with the entropy identity~\eqref{eq:profile-type-entropy} proves~\eqref{eq:profile-types-sharp}.
\end{proof}

For $\gk\geq3$ and uniform margins, the corresponding entropy bound follows from the rowwise argument in~\cite[Lemma~2.6 and Appendix~B]{Kim26}. The following lemma extends the bound to near-uniform margins; its proof is given in Appendix~\ref{sec:constants}.

\begin{lem}[Near-uniform margins]\label{lem:near-uniform}
	Let $q$ be a probability measure on $[\gk]^2$ with margins $(p,p')$. Define $w := q - p\otimes p'$ and $\delta := \norm{p-u}_\infty \vee \norm{p'-u}_\infty$. There is $C = C(\gk)$ such that, if $\gk\delta \leq \tfrac12$, then
	\begin{align}
		D\big(q  \|  p \otimes p'\big)  \geq \frac{\alpha_c(\gk) \gk^2}{2}  \norm{w}_F^2  -  C\big( \delta \norm w_F + \delta^2 \big).
		\label{eq:near-uniform}
	\end{align}
\end{lem}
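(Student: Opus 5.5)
We reduce to the case of exactly uniform margins and then treat the margin defect as a perturbation. Write $q=p\otimes p'+w$, so that $w$ has zero row and column sums. Set $\tilde q:=\bar u+w$. This is a signed matrix with uniform margins, and it is a probability measure whenever it is nonnegative. The uniform-margin input is the bound from~\cite[Lemma~2.6 and Appendix~B]{Kim26} quoted before the statement:
\[
D(r\|\bar u)\geq\frac{\alpha_c(\gk)\gk^2}{2}\norm{r-\bar u}_F^2
\]
for every probability $r$ on $[\gk]^2$ with uniform margins. For $\gk=2$ this is the elementary bound $D\geq 2\norm{\cdot}_{TV}^2$-type inequality with the sharp constant $\eta_\phi(2)=1$.

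The plan is to show
\begin{align*}
D(q\|p\otimes p')\geq D(\tilde q\|\bar u)-C(\delta\norm w_F+\delta^2).
\end{align*}
This works when $\tilde q\geq0$. When $\tilde q$ fails to be nonnegative, one needs a separate argument. Using the convex-analytic form, we write $D(q\|\pi)=\sum_{x,y}\pi_{xy}h(q_{xy}/\pi_{xy})$ with $h(t)=t\ln t-t+1$. Here $\pi=p\otimes p'$ has entries $\gk^{-2}(1+O(\gk\delta))$, and $q/\pi=1+w/\pi$.

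We split according to the size of $w$.

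\textbf{Small $w$.} Suppose $\norm w_\infty\leq c\gk^{-2}$ for a small constant $c$. Then $\tilde q\geq0$, and we compare $h(1+w_{xy}/\pi_{xy})$ with $h(1+\gk^2w_{xy})$. Two facts drive the comparison:
\begin{itemize}
\item $\abs{1/\pi_{xy}-\gk^2}\leq C\delta$;
\item $h'(1+s)=\ln(1+s)=O(\abs s)$ on bounded $s$.
\end{itemize}
Together with the prefactor change $\pi_{xy}-\gk^{-2}=O(\delta)$ multiplying $h=O(\abs{w}^2)$, this gives a per-entry error $O(\delta\abs{w_{xy}}^2+\delta\abs{w_{xy}}\cdot\abs{w_{xy}})$. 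That is even better than required. Summing and applying the Kim bound to $\tilde q$ gives the claim.

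\textbf{Large $w$.} Suppose instead $\norm w_\infty> c\gk^{-2}$. Note that $\norm w_F\leq 2$ always. A linear-in-$\delta$ error suffices only if the inequality has slack. Lower semicontinuity and compactness provide it: the strict inequality
\[
D(r\|\bar u)-\tfrac{\alpha_c\gk^2}{2}\norm{r-\bar u}_F^2\geq\epsilon_0>0
\]
should hold on the compact set $\{\norm{r-\bar u}_\infty\geq c\gk^{-2}\}$ of uniform-margin laws. This is where I expect the main obstacle. One must check that Kim's extremal equality cases occur only at $r=\bar u$ to second order, and at the balanced-block extremizers that define $\eta_\phi$. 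At the latter the inequality is tight, and the slack fails there.

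To handle this, I would avoid the compactness argument and instead perturb the margins directly. Let $\hat r$ be the law with uniform margins obtained from $q$ by the Sinkhorn/I-projection, i.e.\ $\hat r_{xy}=a_xb_yq_{xy}$ with $a,b=1+O(\delta)$. Then we have:
\begin{itemize}
\item $\norm{\hat r-\bar u-w}_F=O(\delta)$;
\item $D(q\|p\otimes p')=D(\hat r\|\bar u)+O(\delta)$.
\end{itemize}
The second point needs care, since the error must be $O(\delta\norm w_F+\delta^2)$ rather than $O(\delta)$. To get it, note that the derivative of $D$ in the margin directions vanishes at $w=0$. Hence a first-order expansion in $(a-1,b-1)$ has a coefficient that is itself $O(\norm w_F)$, which supplies the improvement.

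Combining the two pieces with the Kim bound for $\hat r$, the inequality $\norm{\hat r-\bar u}_F^2\geq\norm w_F^2-C\delta\norm w_F-C\delta^2$ finishes the proof.
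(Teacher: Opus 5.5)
Your small-$w$ step is sound. Write $\pi=p\otimes p'$ and $s=w_{xy}/\pi_{xy}$. Then $\partial_\pi\big[\pi h(1+w/\pi)\big]=\ln(1+s)-s=\cO(s^2)$, so replacing $\pi_{xy}$ by $\gk^{-2}$ costs $\cO(\delta\norm w_F^2)$, and the uniform-margin bound applies to $\bar u+w\geq0$.

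\textbf{The gap is in the large-$w$ step.} You claim that $q$ can be rescaled to uniform margins as $\hat r_{xy}=a_xb_yq_{xy}$ with $a,b=1+\cO(\delta)$, and that $D(q\|p\otimes p')\geq D(\hat r\|\bar u)-\cO(\delta)$. Neither holds uniformly in $q$. The conditioning of matrix scaling depends on how close $q$ is to a support pattern admitting no uniform-margin matrix, not only on $\delta$. For $\gk=2$ take
\[
q=\begin{pmatrix}1/2&0\\ \eps&1/2-\eps\end{pmatrix},\qquad p=u,\quad p'=(1/2+\eps,1/2-\eps),\quad\delta=\eps .
\]
This lies in your large-$w$ regime, since $w_{11}=1/4-\eps/2$. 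Any uniform-margin matrix supported in this pattern has zero $(2,1)$ entry, so no finite positive $a,b$ exist. The I-projection is the limit $r=\mathrm{diag}(1/2,1/2)$, and a direct computation gives
\[
D(q\|p\otimes p')-D(r\|\bar u)=\eps\ln\eps+\cO(\eps).
\]
So the comparison fails by $\delta\ln(1/\delta)$, in the harmful direction. The lemma survives here only through slack, since $\ln2>1/2$. Your appeal to the vanishing margin derivative at $w=0$ presupposes smoothness of the scaling family up to the boundary of the simplex, and the $\ln q_{xy}$ terms destroy this. Also, in this regime $\norm w_F\geq c\gk^{-2}$, so the refinement to $\cO(\delta\norm w_F+\delta^2)$ you worry about is not needed. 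What is missing is even a uniform $\cO(\delta)$ bound.

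Your route could be repaired by a compactness split around the equality set of the uniform-margin bound, whose points have positive entries. That requires knowing the equality cases, which the black-box citation does not give.

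\textbf{The missing idea is to work row by row instead of reducing the pair measure to uniform margins.} By the chain rule, $D(q\|p\otimes p')=\sum_xp_xD(p^{(x)}\|p')$ with $p^{(x)}:=q_{x\cdot}/p_x$. Changing a row's reference measure from $p'$ to $u$ is exactly linear in the row:
\[
D(p^{(x)}\|p')-D(p^{(x)}\|u)=-\sum_yp^{(x)}_y\ln(\gk p'_y).
\]
This is bounded by $C\delta\norm{p^{(x)}-p'}_2+C\delta^2$ for every $q$, because $\langle u,p'-u\rangle=0$ and $\ln(\gk p'_y)=\gk(p'_y-u_y)+\cO(\delta^2)$. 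One then applies the single-row inequality \eqref{eq:row-dual} to $D(p^{(x)}\|u)$, together with $\norm{p^{(x)}-u}_2\geq\norm{p^{(x)}-p'}_2-\sqrt\gk\,\delta$. The row margin $p$ enters only through the weights, via $\sum_xp_x\norm{p^{(x)}-p'}_2^2=\sum_{x,y}w_{xy}^2/p_x\geq\gk(1-\gk\delta)\norm w_F^2$. Zero entries, scaling and compactness never arise. The uniform-margin bound you cite is this same argument with $p=p'=u$. Treating it as a black box discards exactly the row structure that makes the margin perturbation harmless.
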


\begin{lem}[Profile-robust pair moments]\label{lem:profile-moments}
	Let $0<\beta^2<\alpha_c(\gk)$ and $L>0$. Then
	\begin{enumeratei}
		\item
		\begin{align*}
			\sup_N\sup_{\norm{m_{\mvn}}_2,\norm{m_{\mvn'}}_2\leq L}
			\E_{\mvgs,\mvgt}^{(\mvn),(\mvn')}\big(e^{\beta^2\xi(\sR)}\big)
			<\infty,
		\end{align*}
		\item
		\begin{align*}
			\sup_{\norm{m_{\mvn}}_2,\norm{m_{\mvn'}}_2\leq L}
			\left|\E_{\mvgs,\mvgt}^{(\mvn),(\mvn')}\big(e^{\beta^2\xi(\sR)}\big)-e^{\varphi_\gk(\beta^2)}\right|\to0.
		\end{align*}
	\end{enumeratei}
	Here $\sR=\sR(\mvgs,\mvgt)$.
\end{lem}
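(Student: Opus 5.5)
The plan is to prove (ii) from (i) together with the overlap CLT of Lemma~\ref{lem:profile-clt}, and to prove (i) by the method of types. For (ii), note that $\sR\Rightarrow K_0$ uniformly along any profile sequences in the window. The function $x\mapsto e^{\beta^2\xi(x)}$ is continuous, so $e^{\beta^2\xi(\sR)}\Rightarrow e^{\beta^2\xi(K_0)}$. Since $\beta^2<\alpha_c(\gk)\le1$, Lemma~\ref{lem:Zsecond} identifies the limiting mean as $e^{\varphi_\gk(\beta^2)}$. Convergence of expectations needs uniform integrability. I will obtain it by applying the bound (i) with $\beta^2$ replaced by some $\beta'^2\in(\beta^2,\alpha_c(\gk))$: this gives a uniform bound on $\E\big(e^{\beta^2\xi(\sR)}\big)^{r}$ with $r=\beta'^2/\beta^2>1$. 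The supremum over the window is then handled by choosing, for each $N$, near-maximizing profiles and passing to a subsequence.

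For (i), write $q=p(\mvgs,\mvgt)$, $p=\mvn/N$, $p'=\mvn'/N$. Since $\sR=\gk\sqrt N\,\Pi q\Pi$, we have $\xi(\sR)\le \tfrac12\gk^2N\norm{\hat q}_F^2$. Moreover $\hat q=\Pi w\Pi+\Pi(p\otimes p')\Pi$ with $w=q-p\otimes p'$, and $\Pi(p\otimes p')\Pi=(\Pi p)(\Pi p')^\intercal=\cO(1/N)$ on the window, because $\Pi p=(\gk N)^{-1/2}m_{\mvn}$. It follows that
\[
\beta^2\xi(\sR)\le \tfrac12\beta^2\gk^2N\norm{w}_F^2+C\sqrt N\norm w_F+C.
\]
Summing over realizable types with Lemma~\ref{lem:profile-types} and Lemma~\ref{lem:near-uniform} (where $\delta=\cO(N^{-1/2})$, so the error is $C(\sqrt N\norm w_F+1)$) bounds the expectation by
\[
\sum_q P(q)\exp\Big(-\tfrac12(\alpha_c(\gk)-\beta^2)\gk^2N\norm w_F^2+C\sqrt N\norm w_F+C\Big).
\]

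I then split into two regimes.

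\textbf{Away from the center.} If $\norm w_F\ge\eps$ for a small fixed $\eps$, there are only polynomially many types. The quadratic term dominates, since $\sqrt N\norm w_F\ll N\norm w_F^2$ once $\norm w_F\gg N^{-1/2}$. Together with the crude bound $(N+1)^\gk e^{-ND}$, this gives a contribution of $e^{-cN}$. I must check that Lemma~\ref{lem:near-uniform} gives a positive gap for all $q$, not only small $w$. If it only holds locally, I will use instead the global bound $D\ge0$ combined with $\beta^2\xi\le\beta^2\gk^2N/2\cdot\norm{\hat q}_F^2$ and the fact that $\alpha_c\gk^2/2\cdot\norm w^2$ is the true global rate function bound, which is what the lemma asserts.

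\textbf{Near the center.} If $\norm w_F<\eps$, the type lies in the core, and the sharp bound~\eqref{eq:profile-types-sharp} gives the factor $N^{-(\gk-1)^2/2}$. Types $q$ with fixed margins form a lattice of spacing $1/N$ in a space of dimension $(\gk-1)^2$. The resulting sum is therefore a Riemann sum for a Gaussian integral in the variable $\sqrt N w$, of the form
\[
\int e^{-c\norm y^2+C\norm y}\,dy<\infty,
\]
which is uniformly bounded in $N$ and in the profiles.

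The main obstacle is the linear cross term $C\sqrt N\norm w_F$ coming from the off-center margins, which competes with the Gaussian decay. At scale $\norm w_F\sim N^{-1/2}$ it is only $\cO(\norm y)$, so it is harmless, but this requires the error in Lemma~\ref{lem:near-uniform} to be exactly of the form $\delta\norm w_F$ with $\delta=\cO(N^{-1/2})$. I also need to verify that the projection of $w$ used in $\xi$ matches the $\norm w_F$ appearing in the entropy bound. This holds because $w$ has zero margins, so $\Pi w\Pi=w$.
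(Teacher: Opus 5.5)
Your proposal is correct and follows essentially the same route as the paper. Part (i) uses a type expansion that combines Lemma~\ref{lem:profile-types} with the entropy bound of Lemma~\ref{lem:near-uniform}, which does hold for every $q$ once $\gk\delta\leq\tfrac12$; it applies the sharp bound with a Gaussian lattice sum near $p\otimes p'$ and the crude bound elsewhere. Part (ii) follows from Lemma~\ref{lem:profile-clt}, uniform integrability from a larger exponent, and a subsequence argument. The remaining differences are cosmetic: the paper completes the square to absorb $C\sqrt N\norm{w}_F$, and it splits by core membership rather than by the size of $\norm{w}_F$.
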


This is a Laplace-method computation over pair types. The energy of a type is quadratic in its doubly centered part, and Lemma~\ref{lem:near-uniform} shows that the entropy cost dominates it when $\beta^2<\alpha_c(\gk)$. Near $p\otimes p'$ the sharp bound~\eqref{eq:profile-types-sharp} produces a Gaussian lattice sum, whose normalization cancels the prefactor $N^{-(\gk-1)^2/2}$. Types away from the core are exponentially rare. Part (ii) then follows from Lemma~\ref{lem:profile-clt} and uniform integrability.

\begin{proof}
	Write $p=\mvn/N$ and $p'=\mvn'/N$, and let $\cT_N$ be the set of realizable pair types with these margins. Conditioning on the pair type and using $\sR=\gk\sqrt N\hat q$ in the definition of $\xi$, we obtain the type expansion
	\begin{align*}
		\E_{\mvgs,\mvgt}^{(\mvn),(\mvn')}\big(e^{\beta^2\xi(\sR)}\big)
		=e^{-\frac{\beta^2}2(\gk-1)^2}
		\sum_{q\in\cT_N}\pr_{\mvgs,\mvgt}^{(\mvn),(\mvn')}
		\big(p(\mvgs,\mvgt)=q\big)
		e^{\frac{\beta^2\gk^2N}{2}\norm{\hat q}_F^2}.
	\end{align*}

	(i) \smallskip\noindent\textbf{Step 1 (energy and entropy).} Define $w=q-p\otimes p'$ and $\delta=\norm{p-u}_\infty\vee\norm{p'-u}_\infty\leq L/\sqrt{\gk N}$. Since $w$ is doubly centered and $\Pi p=p-u$,
	\begin{align*}
		\hat q-w=\Pi(p\otimes p')\Pi=(p-u)(p'-u)^{\intercal}
		\quad\text{and}\quad
		\norm{\hat q-w}_F\leq\gk\delta^2.
	\end{align*}
	The resulting energy error is controlled by
	\begin{align*}
		\norm{\hat q}_F^2-\norm w_F^2
		=2\langle\hat q-w,w\rangle_F+\norm{\hat q-w}_F^2
		\leq2\gk\delta^2\norm w_F+\gk^2\delta^4.
	\end{align*}
	Set $a:=(\alpha_c(\gk)-\beta^2)\gk^2/2>0$. Combining this estimate with Lemma~\ref{lem:near-uniform} and using $\delta\leq L/\sqrt{\gk N}$, we get, for $N\geq C(1+L^2)$,
	\begin{align*}
		\frac{\beta^2\gk^2N}{2}\norm{\hat q}_F^2-ND(q\|p\otimes p')
		\leq-Na\norm w_F^2+C\sqrt N L\norm w_F+CL^2.
	\end{align*}
	Completing the square,
	\begin{align*}
		-\frac{Na}{2}\norm w_F^2+C\sqrt N L\norm w_F
		=-\frac{Na}{2}\left(\norm w_F-\frac{CL}{a\sqrt N}\right)^2
		+\frac{C^2L^2}{2a}
		\leq CL^2.
	\end{align*}
	Consequently,
	\begin{align}
		\frac{\beta^2\gk^2N}{2}\norm{\hat q}_F^2-ND(q\|p\otimes p')
		\leq-\frac{Na}{2}\norm w_F^2+CL^2.
		\label{eq:profile-exponent}
	\end{align}

	\smallskip\noindent\textbf{Step 2 (lattice sum over core types).} A table with fixed margins is determined by its entries with $x,y<\gk$. Each $Nq_{xy}$ is an integer, and $\norm w_F^2\geq\sum_{x,y<\gk}(q_{xy}-p_xp'_y)^2$. Thus~\eqref{eq:profile-types-sharp} and~\eqref{eq:profile-exponent} give
	\begin{align*}
		\sum_{q\text{ core}}\pr_{\mvgs,\mvgt}^{(\mvn),(\mvn')}
		\big(p(\mvgs,\mvgt)=q\big)e^{\frac{\beta^2\gk^2N}{2}\norm{\hat q}_F^2}
		&\leq C e^{CL^2}N^{-(\gk-1)^2/2}
		\sum_{q\in\cT_N}e^{-\frac{Na}2\norm{q-p\otimes p'}_F^2}\\
		&\leq C e^{CL^2}N^{-(\gk-1)^2/2}
		\prod_{x,y<\gk}\left(\sum_{j\in\dZ}e^{-\frac a{2N}(j-Np_xp'_y)^2}\right)
		\leq Ce^{CL^2}.
	\end{align*}
	For the last inequality, the one-dimensional Gaussian sum satisfies
	\begin{align*}
		\sup_{b\in\dR}\sum_{j\in\dZ}e^{-\frac a{2N}(j-b)^2}
		\leq2+\sqrt{\frac{2\pi N}{a}}\leq C\sqrt N.
	\end{align*}
	There are $(\gk-1)^2$ such factors, so their powers of $N$ cancel the prefactor, uniformly over the profiles.

	\smallskip\noindent\textbf{Step 3 (remaining types).} If $q$ is not in the core, some cell has $q_{xy}<1/(2\gk^2)$. Since $p$ and $p'$ tend to $u$ uniformly on the profile window, $\norm{q-p\otimes p'}_F\geq1/(4\gk^2)$ for $N\geq C(1+L^2)$, after increasing $C$ if necessary. There are at most $(N+1)^{\gk^2}$ types. So~\eqref{eq:profile-types} and Step 1 imply
	\begin{align*}
		\sum_{q\text{ not core}}\pr_{\mvgs,\mvgt}^{(\mvn),(\mvn')}
		\big(p(\mvgs,\mvgt)=q\big)e^{\frac{\beta^2\gk^2N}{2}\norm{\hat q}_F^2}
		\leq e^{CL^2}(N+1)^{\gk+\gk^2}
		e^{-Na/(32\gk^4)}\longrightarrow0,
	\end{align*}
	uniformly over the profiles. In particular, this sum is bounded by $C$ uniformly in $N$. For fixed $L$, the finitely many smaller $N$ also give finite expectations, since their configuration spaces are finite. Thus Steps 2 and 3 prove (i).

	(ii) Choose $\beta^2<s<\alpha_c(\gk)$. Part (i) with $\beta^2$ replaced by $s$ bounds the expectation of $\big(e^{\beta^2\xi(\sR)}\big)^{s/\beta^2}=e^{s\xi(\sR)}$ uniformly over $N$ and the profile window. Hence these exponential weights are uniformly integrable. By Lemma~\ref{lem:profile-clt}, $\sR\Longrightarrow K_0$ along every sequence of profiles in the window. The continuous mapping theorem, uniform integrability, and Lemma~\ref{lem:Zsecond} therefore give
	\begin{align*}
		\E_{\mvgs,\mvgt}^{(\mvn),(\mvn')}\big(e^{\beta^2\xi(\sR)}\big)
		\longrightarrow\E\big(e^{\beta^2\xi(K_0)}\big)
		=e^{-\frac{\beta^2}2(\gk-1)^2}(1-\beta^2)^{-(\gk-1)^2/2}
		=e^{\varphi_\gk(\beta^2)}.
	\end{align*}
	If the uniform convergence in (ii) failed, some sequence of profile pairs in the window would have errors bounded below by a fixed positive number along a subsequence. This contradicts the convergence just proved.
\end{proof}

\begin{rem}[Sharpness of the overlap exponential-moment bound]\label{rem:overlap-threshold}
	For $\gk\geq3$, the entropy bound for uniform margins is attained by the pair measure
	\begin{align*}
		q^*_{xx}=\frac{\gk-1}{\gk^2},\qquad
		q^*_{xy}=\frac1{\gk^2(\gk-1)}\quad(y\ne x).
	\end{align*}
	Each row is $\gk^{-1}$ times a permutation of $p^*$ from Lemma~\ref{lem:eta-phi}, with its large entry on the diagonal. Thus $q^*$ has uniform margins and
	\begin{align*}
		D(q^*\|\bar u)=\frac{\alpha_c(\gk)\gk^2}{2}\norm{q^*-\bar u}_F^2.
	\end{align*}
	Along $N$ divisible by $\gk^2(\gk-1)$, the measure $q^*$ is a realizable pair type for two balanced configurations. The exact probability in~\eqref{eq:profile-types} and Stirling's formula give
	\begin{align*}
		\ln\pr_{\mvgs,\mvgt}^{(\mvn^*),(\mvn^*)}\big(p(\mvgs,\mvgt)=q^*\big)
		=-ND(q^*\|\bar u)+\cO(\ln N).
	\end{align*}
	Consequently, this single type gives the lower bound
	\begin{align*}
		\E_{\mvgs,\mvgt}^{(\mvn^*),(\mvn^*)}\big(e^{\beta^2\xi(\sR)}\big)
		\geq\exp\left\{\frac{N(\beta^2-\alpha_c(\gk))\gk^2}{2}\norm{q^*-\bar u}_F^2-C\ln N\right\}.
	\end{align*}
	For $\beta^2>\alpha_c(\gk)$ this diverges exponentially along the indicated sequence of $N$, so the uniform bound in Lemma~\ref{lem:profile-moments}(i) fails above this threshold. The same conclusion holds for the second moment of the normalized balanced partition function under the truncated disorder: the one-edge expansion~\eqref{eq:general-edge-cumulant} shows that it differs from this overlap exponential moment by factors bounded above and below by positive constants, for each fixed $\beta$.
\end{rem}

%%%%%%%%%%%%%%%%%%%%%%%%%%%%%%%%%%%%%%%%%%%%%%%%%%%%%%%%%%%%%%%%%%%%%%%%%%%%%%%
\section{Balanced free energy for general disorder}\label{sec:general-fluctuations}
%%%%%%%%%%%%%%%%%%%%%%%%%%%%%%%%%%%%%%%%%%%%%%%%%%%%%%%%%%%%%%%%%%%%%%%%%%%%%%%
In this section we prove Theorem~\ref{thm:balanced_clt}. No estimate on the unconstrained free energy is needed. The argument has three parts. First, after a truncation of the disorder, we expand the disorder averages of one, two and three normalized weights (Lemma~\ref{lem:general-kernels}). Together with Section~\ref{sec:profile}, this gives the second moments of the normalized profile partition functions (Lemma~\ref{lem:general-profile-moments}). Second, we prove a joint central limit theorem for the quadratic and loop statistics of an independent edge array (Proposition~\ref{prop:array-joint-clt}). We apply it under the product law and under the tilted law (Proposition~\ref{prop:general-planted-cycles}). Third, the $L^2$ approximation of Section~\ref{sec:ideas} identifies the limit law.

\subsection{Truncation and mixed disorder moments}\label{sec:general-kernels}
Recall the edge notation $E_N$, $A_e$ and $J_e$ from Section~\ref{sec:model}. The exponential moments used below need not exist for the original disorder. We therefore choose $\eps_N\downarrow0$ such that $\eps_N\sqrt N\to\infty$ and
\begin{align}
	\eps_N^{-4}\,\E\big(J^4\ind\{\abs{J}>\eps_N\sqrt N\}\big)\to0.
	\label{eq:general-truncation}
\end{align}
Such a sequence exists since $\E(J^4)<\infty$. For instance, with $\tau(t):=\E\big(J^4\ind\{\abs J>t\}\big)$, which decreases to zero, one may take $\eps_N:=\max\{N^{-1/4},\tau(N^{1/4})^{1/8}\}$. Then $\eps_N\sqrt N\geq N^{1/4}$, and the left side of~\eqref{eq:general-truncation} is at most $\tau(N^{1/4})^{1/2}$.
Set $\wt{J}:=J\ind\{\abs{J}\leq\eps_N\sqrt N\}$ and, on the same probability space as the original disorder, define
\begin{align}
	\wt{J}_{e}:=J_{e}\ind\{\abs{J_{e}}\leq\eps_N\sqrt N\},
	\quad e\in E_N.
	\label{eq:simple-truncation}
\end{align}
The variables $\wt{J}_{e}$ are i.i.d.~copies of $\wt{J}$. Hence they are symmetric, with $\E(\wt{J}_{e})=0$ and $\abs{\wt{J}_{e}}\leq\eps_N\sqrt N$.

By~\eqref{eq:general-truncation} and Markov's inequality, the original and truncated arrays agree with probability $1-o(1)$, and truncation reduces the variance by only $o(N^{-1})$:
\begin{align*}
	N^2\,\pr\big(\abs{J}>\eps_N\sqrt N\big)
	&\leq\eps_N^{-4}\,\E\big(J^4\ind\{\abs{J}>\eps_N\sqrt N\}\big)=o(1),\\
	N\,\E\big(J^2\ind\{\abs{J}>\eps_N\sqrt N\}\big)
	&\leq\eps_N^{-2}\,\E\big(J^4\ind\{\abs{J}>\eps_N\sqrt N\}\big)=o(1).
\end{align*}
In particular, the coupling gives
\begin{align}
	&d_{\mathrm{TV}}\Big(\operatorname{Law}\big((J_e)_{e\in E_N}\big),
		\operatorname{Law}\big((\wt{J}_e)_{e\in E_N}\big)\Big)\nonumber\\
	&\quad\leq\pr\big((J_e)_{e\in E_N}\ne(\wt{J}_e)_{e\in E_N}\big)
	\leq\binom N2\pr\big(\abs{J}>\eps_N\sqrt N\big)=o(1).
	\label{eq:truncation-coupling}
\end{align}
Here $d_{\mathrm{TV}}(P,Q):=\sup_A\abs{P(A)-Q(A)}$. By~\eqref{eq:truncation-coupling}, it suffices to prove the distributional limits of the log-partition functions, and the convergence in probability of Gibbs averages, under the truncated disorder.

The truncated moments satisfy
\begin{align}
	1-\E\big(\wt{J}^2\big)
	&=\E\big(J^2\ind\{\abs{J}>\eps_N\sqrt N\}\big)=o(N^{-1}),
	\label{eq:truncated-second-moment}\\
	\E\big(\wt{J}^4\big)
	&=\E\big(J^4\ind\{\abs{J}\leq\eps_N\sqrt N\}\big)\to\E\big(J^4\big).
	\label{eq:truncated-fourth-moment}
\end{align}
Moreover, for every $M>0$,
\begin{align}
	\E\big(\wt{J}^4\ind\{\abs{\wt{J}}>M\}\big)
	\leq\E\big(J^4\ind\{\abs{J}>M\}\big),
	\label{eq:truncated-fourth-tail}
\end{align}
so the fourth powers are uniformly integrable. We write the truncated Hamiltonian as
\begin{align*}
	\wt{H}_N(\mvgs):=\frac{\beta}{\sqrt N}\sum_{e\in E_N}\wt{J}_{e}A_{e}(\mvgs).
\end{align*}
We keep $\pr$ and $\E$ for the original product probability law and its expectation. Except when the two arrays are compared, all weights and statistics below are built from $\wt J$.

Define
\begin{align*}
	c_N(\mvgs):=\ln\E\big(e^{\wt{H}_N(\mvgs)}\big)
	\quad\text{and}\quad
	e_N(\mvgs):=e^{\wt{H}_N(\mvgs)-c_N(\mvgs)}.
\end{align*}
The quantity $c_N$ is constant on each profile class. Indeed, each edge contributes a term that depends only on whether the edge is monochromatic, and there are $\sum_x\binom{n_x}2$ monochromatic edges for the profile $\mvn$. We write $c_N(\mvn)$ for the common value of $c_N$ on $\Sigma_N^{(\mvn)}$.

For a profile $\mvn$, set
\begin{align*}
	\cZ_N^{(\mvn)}:=\E_{\mvgs}^{(\mvn)}\big( e_N(\mvgs)\big),
\end{align*}
so that $\E\big(\cZ_N^{(\mvn)}\big)=1$. We write $\cZ_N^{\bal}:= \cZ_N^{(\mvn^*)}$ when $\gk\mid N$. These are the disorder-normalized partition functions of the truncated model. By definition,
\begin{align}
	Z_N^{(\mvn)}(\beta;\wt J)=\abs{\Sigma_N^{(\mvn)}}\,e^{ c_N(\mvn)}\, \cZ_N^{(\mvn)}.
	\label{eq:profile-factorization}
\end{align}

The mixed log-moment kernels are
\begin{align}
	\Lambda_N(\mvgs,\mvgt)
	:=\ln\E\big(e_N(\mvgs)e_N(\mvgt)\big)
	\quad\text{and}\quad
	\Lambda^{(3)}_N(\mvgs,\mvgt,\mvgt')
	:=\ln\E\big(e_N(\mvgs)e_N(\mvgt)e_N(\mvgt')\big).
	\label{eq:cumulant-kernels}
\end{align}
The following lemma shows that, under independent uniform configurations with profiles in a fixed bounded magnetization window, the pair kernel is $\beta^2\xi(\sR)$ plus a deterministic constant and the triple kernel is the sum of the three pair kernels, both up to errors tending to zero in probability.

\begin{lem}[Mixed log-moment kernels]\label{lem:general-kernels}
	For fixed $0<\beta^2<1$, uniformly over configurations,
	\begin{align}
		c_N(\mvgs)
		&=\frac{\beta^2(\gk-1)}4(N-\gk+1)+\frac\alpha2\norm{m(\mvgs)}_2^2
			+\frac{\mu_4\beta^4(\gk-1)(\gk^2-3\gk+3)}{48}
			+\fR_N^{(1)}(\mvgs),
		\label{eq:general-self-kernel}\\
		\Lambda_N(\mvgs,\mvgt)
		&=\beta^2\xi\big(\sR(\mvgs,\mvgt)\big)+\varphi_{\gk,J}(\beta^2)-\varphi_{\gk}(\beta^2)
		+\fR_N^{(2)}(\mvgs,\mvgt),
		\label{eq:general-pair-kernel}\\
		\Lambda^{(3)}_N(\mvgs,\mvgt,\mvgt')
		&=\Lambda_N(\mvgs,\mvgt)+\Lambda_N(\mvgs,\mvgt')
			+\Lambda_N(\mvgt,\mvgt')+\fR_N^{(3)}(\mvgs,\mvgt,\mvgt'),
		\label{eq:general-triple-kernel}
	\end{align}
	where
	\begin{align*}
		\abs{\fR_N^{(1)}(\mvgs)}
		\leq o(1)+\frac CN\norm{m(\mvgs)}_2^2.
	\end{align*}
	The remainders $\fR_N^{(2)}$ and $\fR_N^{(3)}$ are bounded uniformly in $N$ and in the configurations. Under independent uniform configurations with profiles in a fixed window $\norm{m_{\mvn}}_2\leq L$, both converge to zero in probability, uniformly over the profiles. Explicitly, for every fixed $L>0$ and $\varepsilon>0$,
	\begin{align*}
		\sup_{\norm{m_{\mvn}}_2,\norm{m_{\mvn'}}_2\leq L}
		\pr_{\mvgs,\mvgt}^{(\mvn),(\mvn')}
		\big(\abs{\fR_N^{(2)}(\mvgs,\mvgt)}>\varepsilon\big)\to0,
	\end{align*}
	and the analogous statement holds for $\fR_N^{(3)}$, with the supremum over three profiles in the same window.
\end{lem}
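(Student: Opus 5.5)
The plan is to work edge by edge with the cumulant generating function of the truncated coupling. Set $\psi_N(t):=\ln\E\big(e^{t\wt J}\big)$. Since $\wt J$ is symmetric, $\psi_N$ is even. Independence over edges gives
\begin{align*}
c_N(\mvgs)=\sum_{e}\psi_N\big(\beta A_e(\mvgs)/\sqrt N\big),\qquad
\Lambda_N(\mvgs,\mvgt)=\sum_e\Big[\psi_N\big(\tfrac{\beta}{\sqrt N}(A_e(\mvgs)+A_e(\mvgt))\big)-\psi_N\big(\tfrac{\beta A_e(\mvgs)}{\sqrt N}\big)-\psi_N\big(\tfrac{\beta A_e(\mvgt)}{\sqrt N}\big)\Big],
\end{align*}
with the analogous inclusion--exclusion formula over the three replicas for $\Lambda^{(3)}_N$. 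The basic input is a one-edge expansion for $\abs t\le C/\sqrt N$:
\[
\psi_N(t)=\tfrac{t^2}{2}\E(\wt J^2)+\tfrac{t^4}{24}\big(\E(\wt J^4)-3\E(\wt J^2)^2\big)+r_N(t).
\]
We use $\abs{t\wt J}\le C\eps_N$, so a Taylor expansion of $e^{t\wt J}$ and of the logarithm is legitimate. The remainder should satisfy $\sum_{e}\sup_{\abs t\le C/\sqrt N}\abs{r_N(t)}=o(1)$. To see this, bound the sixth-order term by $t^6\E(\wt J^6)\le t^6\eps_N^2N\,\E(J^4)$; summed over $N^2$ edges this is $O(\eps_N^2)$. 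Together with~\eqref{eq:truncated-second-moment}, this lets us replace $\E(\wt J^2)$ by $1$ at cost $o(1)$, and $\E(\wt J^4)$ by $\E(J^4)$ using~\eqref{eq:truncated-fourth-moment}.

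\textbf{The self kernel.} Plugging in gives
\[
c_N=\frac{\beta^2}{2N}\sum_{i<j}A_{ij}^2+\frac{\mu_4\beta^4}{24N^2}\sum_{i<j}A_{ij}^4+o(1).
\]
The quadratic term equals $\frac{\beta^2}{2}\xi(\sR(\mvgs,\mvgs))$ by~\eqref{eq:pair-edge}, and~\eqref{eq:selfoverlap} turns it into the first two terms of~\eqref{eq:general-self-kernel}. For the quartic term, reduce $A^4$ to a linear function of $A$ using the two-value relation $A^2=(\gk-2)A+\gk-1$. Then apply~\eqref{eq:vertex-edge}. The constant part gives $\frac{N^2}{2}\cdot\E_{\mathrm{unif}}(A^4)$, where the average is over $A=\gk-1$ with probability $1/\gk$ and $A=-1$ otherwise; this yields $(\gk-1)(\gk^2-3\gk+3)$. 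The linear part contributes $O(\norm m_2^2/N)$, giving $\fR^{(1)}_N$.

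\textbf{The pair kernel.} The quadratic part of $\Lambda_N$ is the cross term $\frac{\beta^2}{N}\sum_e A_e(\mvgs)A_e(\mvgt)=\beta^2\xi(\sR)$, by~\eqref{eq:pair-edge}. The quartic part is $\frac{\mu_4\beta^4}{24N^2}\sum_e\big[(a+b)^4-a^4-b^4\big]$ with $a=A_e(\mvgs)$, $b=A_e(\mvgt)$, that is, $\frac{\mu_4\beta^4}{24N^2}\sum_e(4a^3b+6a^2b^2+4ab^3)$. Again reduce powers via $A^2=(\gk-2)A+\gk-1$. Each resulting sum $N^{-2}\sum_e a^ib^j$ is, up to $O(N^{-1})\cdot(\text{bounded})$, equal to $\frac12\E(a^i)\E(b^j)$ under independent uniform colors. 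Deviations from this are controlled by $N^{-1}\xi(\sR)$ and $N^{-1}\norm m^2$, which vanish in probability on the window by Lemma~\ref{lem:profile-clt}. Only $6a^2b^2$ survives, giving
\[
\frac{6\mu_4\beta^4}{24}\cdot\frac12(\gk-1)^2=\frac{\mu_4\beta^4(\gk-1)^2}{8}=\varphi_{\gk,J}(\beta^2)-\varphi_\gk(\beta^2).
\]
All error terms are deterministically bounded, because $\abs{A_e}\le\gk$ and there are $N^2$ edges with $t^4$ factors. This gives the uniform boundedness of $\fR^{(2)}_N$.

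\textbf{The triple kernel.} By inclusion--exclusion on cumulants, the second-order parts are exactly pairwise additive. The genuinely triple terms at fourth order are of the form $\frac{\mu_4\beta^4}{N^2}\sum_e a^2bc$ and permutations. After reduction, each is at most $N^{-1}$ times a triple-overlap sum as in~\eqref{eq:triple-edge}, plus pair-type terms. The triple overlap is bounded by $\norm{R^{(3)}}_F^2/N$. Its expectation is $O(1/N)$ by~\eqref{eq:triple-overlap-moment}, so Markov's inequality makes it vanish in probability.

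The main obstacle is the bookkeeping of the fourth-order terms: one must verify exactly which polynomial edge sums converge to deterministic constants under uniform profile-constrained configurations and which vanish. Justifying the Taylor remainder uniformly via the truncation~\eqref{eq:general-truncation} is the other delicate point.
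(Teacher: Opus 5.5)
Your proposal is correct and follows essentially the same route as the paper. Both arguments use a uniform fourth-order one-edge cumulant expansion, justified by the truncation, and sum it over edges. They then reduce powers of $A_e$ through $A_e^2=(\gk-2)A_e+\gk-1$ together with the vertex, pair and triple edge identities, and finish with Lemma~\ref{lem:profile-clt} and Markov's inequality via \eqref{eq:triple-overlap-moment} for the in-probability statements. Reading off the constant terms as uniform-color averages $\E_{\mathrm{unif}}(A^k)$ is only a bookkeeping shortcut for the paper's explicit polynomial reductions.
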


\begin{proof}
	\emph{The one-edge expansion.} Fix $C>0$. Since $\wt{J}$ is symmetric and $\abs{z\wt{J}}\leq C\eps_N$ for $\abs{z}\leq C/\sqrt N$, Taylor's formula gives, uniformly on this interval,
	\begin{align*}
		\E\big(e^{z\wt{J}}\big) =1+\frac{z^2}{2}\E\big(\wt{J}^2\big)+\frac{z^4}{24}\E\big(\wt{J}^4\big) +\cO\big(\abs{z}^6\E(\abs{\wt{J}}^6)\big).
	\end{align*}
	Here $\E(\abs{\wt{J}}^6)\leq\eps_N^2N\E(J^4)=o(N)$, so the remainder is $o(N^{-2})$. The moment estimates~\eqref{eq:truncated-second-moment} and~\eqref{eq:truncated-fourth-moment} therefore yield
	\begin{align*}
		\E\big(e^{z\wt{J}}\big)=1+\frac{z^2}{2}+\frac{z^4}{24}\E\big(J^4\big)+o(N^{-2}).
	\end{align*}
	The terms after $1$ are $\cO(N^{-1})$. Applying $\ln(1+x)=x-x^2/2+\cO(x^3)$ and recalling $\mu_4=\E(J^4)-3$ gives
	\begin{align}
		\ln\E\big(e^{z\wt{J}}\big)=\frac{z^2}{2}+\frac{\mu_4z^4}{24}+o(N^{-2}),
		\quad\abs{z}\leq C/\sqrt N.
		\label{eq:general-edge-cumulant}
	\end{align}
	All errors are uniform in $z$. Since the edge weights are bounded, we may sum this expansion over the $\binom N2$ edges, for one configuration or for any fixed sum of configurations, with a uniform $o(1)$ error.

	\emph{One configuration.} The two possible values $-1$ and $\gk-1$ of $A_e$ give
	\begin{align*}
		A_e^2=(\gk-2)A_e+\gk-1
		\quad\text{and}\quad
		A_e^4=(\gk-2)(\gk^2-2\gk+2)A_e
			+(\gk-1)(\gk^2-3\gk+3).
	\end{align*}
	By~\eqref{eq:general-edge-cumulant},
	\begin{align*}
		c_N(\mvgs)
		= \sum_{e} \ln \E\big( e^{\beta N^{-1/2} A_e (\mvgs) \wt J_e}\big)
		=\frac{\beta^2}{2N}\sum_e A_e(\mvgs)^2
		+\frac{\mu_4\beta^4}{24N^2}\sum_e A_e(\mvgs)^4+o(1).
	\end{align*}
	By~\eqref{eq:pair-edge} and~\eqref{eq:selfoverlap}, the quadratic term equals the first two terms of~\eqref{eq:general-self-kernel}. The constant part of the quartic term is
	\begin{align*}
		\frac{\mu_4\beta^4(\gk-1)(\gk^2-3\gk+3)}{24N^2}\binom N2
		=\frac{\mu_4\beta^4(\gk-1)(\gk^2-3\gk+3)}{48}+\cO(N^{-1}).
	\end{align*}
	By~\eqref{eq:vertex-edge}, its remaining part is bounded by $CN^{-1}(1+\norm{m(\mvgs)}_2^2)$. This proves~\eqref{eq:general-self-kernel} and the bound on $\fR_N^{(1)}$.

	\emph{Two configurations.} In this proof write $a_e=A_e(\mvgs)$ and $a'_e=A_e(\mvgt)$. Independence of the edges and~\eqref{eq:general-edge-cumulant} give
	\begin{align*}
		\Lambda_N(\mvgs,\mvgt)
		=\frac{\beta^2}{N}\sum_e a_ea'_e +\frac{\mu_4\beta^4}{24N^2}
		\sum_e\big(4a_e^3a'_e+6a_e^2(a'_e)^2+4a_e(a'_e)^3\big)+o(1).
	\end{align*}
	The quadratic term equals $\beta^2\xi\big(\sR(\mvgs,\mvgt)\big)$ by~\eqref{eq:pair-edge}. For the quartic term,
	\begin{align*}
		a_e^3a'_e&=(\gk^2-3\gk+3)a_ea'_e
			+(\gk-1)(\gk-2)a'_e,\\
		a_e^2(a'_e)^2&=(\gk-2)^2a_ea'_e
			+(\gk-1)(\gk-2)(a_e+a'_e)+(\gk-1)^2.
	\end{align*}
	Thus only $6a_e^2(a'_e)^2$ supplies a constant term, and its contribution is
	\begin{align*}
		\frac{\mu_4\beta^4}{24N^2}\,6(\gk-1)^2\binom N2
		=\varphi_{\gk,J}(\beta^2)-\varphi_\gk(\beta^2)+\cO(N^{-1}).
	\end{align*}
	All other quartic terms are linear combinations of $\sum_e a_e$, $\sum_e a'_e$, and $\sum_e a_ea'_e$. By~\eqref{eq:vertex-edge}--\eqref{eq:pair-edge} and the definition of $\xi$, and after absorbing the configuration-independent $\cO(N^{-1})$ terms into $o(1)$,
	\begin{align*}
		\abs{\fR_N^{(2)}(\mvgs,\mvgt)}
		\leq o(1)+\frac CN\big(\norm{m(\mvgs)}_2^2+\norm{m(\mvgt)}_2^2
									+\norm{\sR(\mvgs,\mvgt)}_F^2\big).
	\end{align*}
	This proves~\eqref{eq:general-pair-kernel} with the displayed remainder bound. Since $\norm{m}_2^2\leq N(\gk-1)$ and $\norm{\sR}_F^2\leq N(\gk-1)^2$, the remainder is uniformly bounded. On a fixed profile window, Lemma~\ref{lem:profile-clt} gives $\norm{\sR}_F^2=\cO_{\pr}(1)$ uniformly, so $\fR_N^{(2)}\to0$ in probability, uniformly over the profiles.

	\emph{Three configurations.} Define $a''_e=A_e(\mvgt')$. Subtracting the three pair kernels from the logarithm of the triple moment cancels the quadratic terms. The surviving quartic terms in~\eqref{eq:general-edge-cumulant} give
	\begin{align*}
		\fR_N^{(3)}(\mvgs,\mvgt,\mvgt')
		=\frac{\mu_4\beta^4}{2N^2}\sum_e
		\big(a_e^2a'_ea''_e+a_e(a'_e)^2a''_e+a_ea'_e(a''_e)^2\big)+o(1).
	\end{align*}
	This is uniformly bounded, since each edge weight is bounded. To prove convergence to zero, we use
	\begin{align*}
		a_e^2a'_ea''_e+a_e(a'_e)^2a''_e+a_ea'_e(a''_e)^2
		=3(\gk-2)a_ea'_ea''_e+(\gk-1)(a_ea'_e+a_ea''_e+a'_ea''_e).
	\end{align*}
	The identities~\eqref{eq:pair-edge} and~\eqref{eq:triple-edge} now yield
	\begin{align*}
		\fR_N^{(3)}(\mvgs,\mvgt,\mvgt')
		&=\frac{\mu_4\beta^4}{2N}\Big[
		\frac{3(\gk-2)}2\big(\norm{R^{(3)}(\mvgs,\mvgt,\mvgt')}_F^2-(\gk-1)^3\big)\\
		&\quad+(\gk-1)\big(\xi\big(\sR(\mvgs,\mvgt)\big)+\xi\big(\sR(\mvgs,\mvgt')\big)
		+\xi\big(\sR(\mvgt,\mvgt')\big)\big)\Big]+o(1).
	\end{align*}
	Under independent uniform configurations in a fixed profile window, the expectation in~\eqref{eq:triple-overlap-moment} is uniformly bounded. Markov's inequality gives $\norm{R^{(3)}}_F^2=\cO_{\pr}(1)$ uniformly over these profiles, and Lemma~\ref{lem:profile-clt} gives the same bound for the three pair overlaps. The displayed expression therefore tends to zero in probability, uniformly over the profiles.
\end{proof}

The uniform-configuration estimates of Section~\ref{sec:profile} now give the mixed second moments. The second statement below is the key observation of Step~1 in Section~\ref{sec:ideas}: near-balanced profiles share a common random factor.

\begin{lem}[Mixed second moments and profile comparison]\label{lem:general-profile-moments}
	For $0<\beta^2<\alpha_c(\gk)$ and $L>0$,
	\begin{align}
		\sup_{\norm{m_{\mvn}}_2,\norm{m_{\mvn'}}_2\leq L}
		\abs{\E\big( \cZ_N^{(\mvn)} \cZ_N^{(\mvn')}\big)
						-e^{\varphi_{\gk,J}(\beta^2)}}&\to0,
		\label{eq:general-mixed-second}\\
		\sup_{\norm{m_{\mvn}}_2\leq L}
		\E\Big(\big( \cZ_N^{(\mvn)}- \cZ_N^{(\mvn^*)}\big)^2\Big)&\to0.
		\label{eq:general-profile-collapse}
	\end{align}
\end{lem}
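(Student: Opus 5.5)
By Fubini, the mixed moment is an average of the pair kernel over independent uniform configurations with the two prescribed profiles:
\begin{align*}
	\E\big(\cZ_N^{(\mvn)}\cZ_N^{(\mvn')}\big)
	=\E_{\mvgs,\mvgt}^{(\mvn),(\mvn')}\big(e^{\Lambda_N(\mvgs,\mvgt)}\big).
\end{align*}
By Lemma~\ref{lem:general-kernels}, $e^{\Lambda_N}=e^{\varphi_{\gk,J}(\beta^2)-\varphi_\gk(\beta^2)}\,e^{\beta^2\xi(\sR)}\,e^{\fR_N^{(2)}}$. The remainder $\fR_N^{(2)}$ is bounded uniformly in $N$ and the configurations, and it tends to zero in probability uniformly over profiles in the window. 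So I split the average as
\begin{align*}
	\E\big(e^{\beta^2\xi(\sR)}\big)+\E\big(e^{\beta^2\xi(\sR)}(e^{\fR_N^{(2)}}-1)\big).
\end{align*}
The first term converges to $e^{\varphi_\gk(\beta^2)}$ uniformly over the window by Lemma~\ref{lem:profile-moments}(ii). Together with the constant prefactor, this gives the target $e^{\varphi_{\gk,J}(\beta^2)}$.

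The error term is the main point. Fix $s\in(\beta^2,\alpha_c(\gk))$ and apply Hölder's inequality with exponents $s/\beta^2$ and its conjugate $r$:
\begin{align*}
	\Big|\E\big(e^{\beta^2\xi(\sR)}(e^{\fR_N^{(2)}}-1)\big)\Big|
	\leq \E\big(e^{s\xi(\sR)}\big)^{\beta^2/s}\,
	\E\big(\abs{e^{\fR_N^{(2)}}-1}^{r}\big)^{1/r}.
\end{align*}
The first factor is bounded uniformly over $N$ and over profile pairs in the window by Lemma~\ref{lem:profile-moments}(i), applied at $s$. The second factor tends to zero uniformly over the window by bounded convergence. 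This works because $\abs{e^{\fR_N^{(2)}}-1}$ is uniformly bounded and $\fR_N^{(2)}\to0$ in probability, uniformly over profiles. To be precise, for each $\eps>0$ split the expectation on the event $\{\abs{\fR_N^{(2)}}\leq\eps\}$ and its complement, and use the uniform probability bound from Lemma~\ref{lem:general-kernels}. This proves~\eqref{eq:general-mixed-second}. Uniformity matters here, and every ingredient used is stated uniformly over the window.

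For~\eqref{eq:general-profile-collapse}, expand the square:
\begin{align*}
	\E\big(\cZ_N^{(\mvn)}\cZ_N^{(\mvn)}\big)
	-2\,\E\big(\cZ_N^{(\mvn)}\cZ_N^{(\mvn^*)}\big)
	+\E\big(\cZ_N^{(\mvn^*)}\cZ_N^{(\mvn^*)}\big).
\end{align*}
All three profile pairs lie in the window of radius $\max(L,\gk)$, because $\norm{m_{\mvn^*}}_2\leq\gk N^{-1/2}\leq\gk$. By~\eqref{eq:general-mixed-second} applied with that radius, each term converges to $e^{\varphi_{\gk,J}(\beta^2)}$ uniformly. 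The combination therefore tends to $e^{v}-2e^{v}+e^{v}=0$ uniformly over $\norm{m_{\mvn}}_2\leq L$, where $v=\varphi_{\gk,J}(\beta^2)$.

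The only delicate step is the uniform integrability needed to absorb the kernel remainder. It is handled by the slack $\beta^2<s<\alpha_c(\gk)$ in Lemma~\ref{lem:profile-moments}(i). Apart from that, the argument is bookkeeping of uniformity over profiles.
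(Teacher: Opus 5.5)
Your proposal is correct and follows the paper's proof essentially step by step. You use the Fubini identity, the kernel decomposition from Lemma~\ref{lem:general-kernels}, absorption of the remainder via the slack $\beta^2<s<\alpha_c(\gk)$ in Lemma~\ref{lem:profile-moments}(i), and expansion of the square for~\eqref{eq:general-profile-collapse}. Your H\"older bound is a quantitative version of the paper's uniform-integrability step, and enlarging the window to radius $\max(L,\gk)$ replaces the paper's observation that $\mvn^*$ eventually lies in the window.
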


\begin{proof}
	By the definitions of $\cZ_N$ and $\Lambda_N$,
	\begin{align*}
		\E\big(\cZ_N^{(\mvn)}\cZ_N^{(\mvn')}\big)
		=\E_{\mvgs,\mvgt}^{(\mvn),(\mvn')}\Big(
		\E\big(e_N(\mvgs)e_N(\mvgt)\big)\Big)
		=\E_{\mvgs,\mvgt}^{(\mvn),(\mvn')}\big(e^{\Lambda_N(\mvgs,\mvgt)}\big).
	\end{align*}
	Choose $\beta^2<s<\alpha_c(\gk)$. Lemma~\ref{lem:profile-moments}(i), applied with exponent $s$, gives the uniform integrability of $e^{\beta^2\xi(\sR)}$ over the profile window. By Lemma~\ref{lem:general-kernels}, $\fR_N^{(2)}$ is uniformly bounded and converges to zero in probability, uniformly over the profiles. Hence
	\begin{align*}
		\E_{\mvgs,\mvgt}^{(\mvn),(\mvn')}\Big(
		e^{\beta^2\xi(\sR)}\,\abs{e^{\fR_N^{(2)}}-1}\Big)\longrightarrow0
	\end{align*}
	uniformly over the window. Combining~\eqref{eq:general-pair-kernel} with Lemma~\ref{lem:profile-moments}(ii), we obtain
	\begin{align*}
		\E_{\mvgs,\mvgt}^{(\mvn),(\mvn')}\big(e^{\Lambda_N}\big)
		=e^{\varphi_{\gk,J}(\beta^2)-\varphi_\gk(\beta^2)}\,
		\E_{\mvgs,\mvgt}^{(\mvn),(\mvn')}\big(e^{\beta^2\xi(\sR)}\big)+o(1)
		\longrightarrow e^{\varphi_{\gk,J}(\beta^2)},
	\end{align*}
	uniformly over the profiles. This proves~\eqref{eq:general-mixed-second}.

	Finally, $\norm{m_{\mvn^*}}_2\to0$, so $\mvn^*$ belongs to the window for all large $N$. Expand the square in~\eqref{eq:general-profile-collapse} and apply~\eqref{eq:general-mixed-second} to the profile pairs $(\mvn,\mvn)$, $(\mvn,\mvn^*)$, and $(\mvn^*,\mvn^*)$. All three moments have the same limit, uniformly in $\mvn$, and their coefficients $1,-2,1$ sum to zero.
\end{proof}

\subsection{Joint fluctuations for independent edge arrays}\label{sec:general-array}

We prove a joint central limit theorem for independent centered edge arrays, and then apply it under the product law and under the tilted law.

Let $\cL_n$ be the set of loops of length $n$ in $E_N$, so that $|\cL_n|=(N)_n/(2n)$ with $(N)_n:=N(N-1)\cdots(N-n+1)$.
For a centered independent triangular array $\cJ_N=(\cJ_{N,e})_{e\in E_N}$, write
\begin{align}
	X_N(\cJ_N):=\frac1N\sum_{e\in E_N}\big(\cJ_{N,e}^2-\E(\cJ_{N,e}^2)\big)
	\quad\text{and}\quad
	Y_{N,n}(\cJ_N):=\frac1{N^{n/2}}\sum_{\gamma\in\cL_n}\prod_{e\in\gamma}\cJ_{N,e}.
	\label{eq:array-cycle-statistics}
\end{align}
The argument indicates the edge array, and expectations are taken under its specified law.

\begin{prop}[Joint fluctuations for independent edge arrays]\label{prop:array-joint-clt}
	Suppose that the edges $\cJ_{N,e}$ are independent and centered, and that
	\begin{align}
		\max_{e\in E_N}\abs{\E(\cJ_{N,e}^2)-1}\to0
		\quad\text{and}\quad
		\frac1{N^2}\sum_{e\in E_N}\var(\cJ_{N,e}^2)\to v\in[0,\infty).
		\label{eq:array-moment-limits}
	\end{align}
	Assume also that their fourth powers are uniformly integrable:
	\begin{align}
		\lim_{M\to\infty}\sup_{N,e}\E\big(\cJ_{N,e}^4\ind\{\abs{\cJ_{N,e}}\geq M\}\big)=0.
		\label{eq:array-fourth-integrability}
	\end{align}
	Then, for fixed $\ell\geq3$,
	\begin{align*}
		\big(X_N(\cJ_N),Y_{N,3}(\cJ_N),\ldots,Y_{N,\ell}(\cJ_N)\big)
		\Longrightarrow \N(0,v)\otimes\bigotimes_{n=3}^{\ell}\N\left(0,\frac1{2n}\right).
	\end{align*}
\end{prop}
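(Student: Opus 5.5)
The plan is to prove the joint convergence by the Cramér--Wold device combined with a martingale central limit theorem in the edge filtration. This is the standard approach for cycle statistics of independent arrays, as in~\cite{Ban20}. First, two reductions. By~\eqref{eq:array-moment-limits} we may replace $\E(\cJ_{N,e}^2)$ by $1$ where it enters variances, with an error that is $o(1)$ uniformly. The statistic $X_N$ is a normalized sum of independent centered variables $N^{-1}(\cJ_{N,e}^2-\E\cJ_{N,e}^2)$. Its variance tends to $v/\!\cdot$ (more precisely to $v$ after noting $\sum_e\var(\cJ_{N,e}^2)/N^2\to v$), and~\eqref{eq:array-fourth-integrability} gives the Lindeberg condition: $N^{-2}\sum_e\E(\cJ_{N,e}^4\ind\{\cJ_{N,e}^2\geq \eps N\})\to0$. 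Next we compute second moments. Distinct loops give orthogonal monomials, and a loop monomial is orthogonal to every $\cJ_e^2-\E\cJ_e^2$, since the monomial contains $\cJ_e$ to the first power and the other edges have mean zero. Hence the covariances between different coordinates vanish. Moreover, $\var(Y_{N,n})=N^{-n}\abs{\cL_n}\prod(\E\cJ^2)\to 1/(2n)$. So the limiting covariance is diagonal as claimed.

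For the joint CLT, fix coefficients $t_2,t_3,\ldots,t_\ell$ and set $S_N:=t_2X_N+\sum_n t_nY_{N,n}$. Order the edges $e_1,\ldots,e_{M}$, $M=\binom N2$, and let $\cF_k=\sigma(\cJ_{N,e_1},\ldots,\cJ_{N,e_k})$. Write $S_N=\sum_k D_k$, where $D_k$ collects the terms whose largest edge is $e_k$:
\begin{align*}
D_k=\frac{t_2}{N}\big(\cJ_{e_k}^2-\E\cJ_{e_k}^2\big)+\cJ_{e_k}W_k,\qquad W_k:=\sum_{n=3}^{\ell}\frac{t_n}{N^{n/2}}\sum_{\gamma\in\cL_n:\,\max\gamma=e_k}\prod_{e\in\gamma\setminus\{e_k\}}\cJ_e,
\end{align*}
with $W_k$ being $\cF_{k-1}$-measurable. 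Thus $(D_k)$ is a martingale difference sequence. The martingale CLT (for instance Hall--Heyde) requires two inputs. The first is $\sum_k\E(D_k^2\mid\cF_{k-1})\to \sigma^2:=t_2^2v+\sum_nt_n^2/(2n)$ in probability. The second is a conditional Lindeberg condition, which we will check via $\sum_k\E(D_k^2\ind\{\abs{D_k}>\eps\})\to0$.

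The conditional variance is
\[
\frac{t_2^2}{N^2}\var(\cJ_{e_k}^2)+\frac{2t_2}{N}\E(\cJ_{e_k}^3)W_k+\E(\cJ_{e_k}^2)W_k^2 .
\]
Summing the first term gives $t_2^2v$. The cross term has mean zero, since $\E W_k=0$. Its second moment is at most $CN^{-2}\sum_k\E W_k^2$ times $\max_e\E\abs{\cJ_e}^3$, plus off-diagonal terms $\E W_kW_j$ that vanish unless the paths coincide. This is $\cO(N^{-1})$, so the cross term is negligible. The main step is to show that $\sum_k W_k^2\to\sum_n t_n^2/(2n)$ in $L^1$ (or in probability). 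Its mean is correct: by orthogonality, $\E\sum_kW_k^2=\sum_n t_n^2N^{-n}\abs{\cL_n}(1+o(1))$. For the concentration, expand $W_k^2$ as a sum over pairs of paths $(P,P')$ closing into loops through $e_k$. Bound $\var(\sum_kW_k^2)$ by counting the connected graphs formed by pairs of such pairs that share an edge. Each such configuration has strictly more edges than vertices once it is not of a ``doubled path'' type. It therefore contributes $\cO(N^{-1})$ after the normalization $N^{-(n+n')}$. The diagonal terms, where $P=P'$, involve $\cJ_e^2$ factors; their fluctuations are controlled by $\var(\cJ_e^2)\le\E\cJ_e^4\le C$ and again lose a factor of $N$. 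The same bookkeeping applied to the fourth moment gives $\sum_k\E D_k^4\to0$ once the $\cJ$'s are bounded. This handles Lindeberg for the $W$ part, since $\sum_k\E W_k^4=\cO(N^{-1})$ for bounded arrays.

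Uniform integrability~\eqref{eq:array-fourth-integrability} is used to make these moment counts legitimate. Truncate at level $M$: set $\cJ^{M}_e:=\cJ_e\ind\{\abs{\cJ_e}\le M\}-\E(\cdot)$ inside the loop statistics. The difference in $Y_{N,n}$ has $L^2$ norm bounded by $C\sup_e\E(\cJ_e^2\ind\{\abs{\cJ_e}>M\})$, uniformly in $N$ (again by orthogonality of the loop monomials), and this tends to zero as $M\to\infty$. The statistic $X_N$ is handled directly by Lindeberg as above, so no truncation is needed there. I expect the main obstacle to be the combinatorial variance bound for $\sum_kW_k^2$, that is, verifying that every overlapping pair of loop-pairs loses a power of $N$. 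A cleaner alternative that I would try first is the method of moments for the truncated array. Joint moments of $(Y_{N,n})$ are sums over multigraphs formed by unions of loops in which every edge is covered at least twice. The leading terms are perfect matchings of identical loops, which give the Gaussian moments with variances $1/(2n)$. The remaining configurations have more edges than vertices, so they vanish. Independence from $X_N$ would then follow from the orthogonality structure together with a conditional CLT for $X_N$ given the loops.
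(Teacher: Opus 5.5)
Your route is correct, but it is not the paper's. The paper builds no martingale. It applies the D\"obler--Peccati fourth-moment theorem (Lemma~\ref{lem:DP}) to $(X_N^{(M_N)},Y_{N,3},\ldots,Y_{N,\ell})$, which are degenerate $U$-statistics of the distinct Hoeffding orders $1,3,\ldots,\ell$ in the edge variables. Distinct orders make the covariance diagonal and condition~(iv) vacuous, so asymptotic independence is automatic. What remains is to check coordinatewise fourth cumulants (Lemmas~\ref{lem:loop-chaos}(ii) and~\ref{lem:quadratic-moments}) and influences. The price is that $\E(X_N^4)$ involves eighth moments of the edges. The paper therefore cuts the quadratic summands at a level $M_N\to\infty$ with $M_N=o(\sqrt N)$, and it needs $\ell\geq4$ and a separate treatment of $v=0$ to satisfy $r\geq2$.

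In your Cram\'er--Wold plus edge-martingale argument, the quadratic part enters the differences $D_k$ as independent summands. Ordinary Lindeberg, which follows from~\eqref{eq:array-fourth-integrability}, therefore suffices, and no cutoff of $X_N$ is needed. In exchange, independence must be extracted from the conditional variance, and your treatment of this holds up.
\begin{itemize}
\item The cross term $\frac{2t_2}{N}\sum_k\E(\cJ_{N,e_k}^3)W_k$ has second moment $\cO(N^{-2})$, which is better than you claim.
\item The concentration of $\sum_kW_k^2$, including the mixed-length terms, is the same loop-union count as in Lemma~\ref{lem:loop-chaos}(ii). Close the four paths with $e_k,e_j$ to get four loops. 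Their union $G$ covers every edge at least twice. Hence $\abs{V(G)}\leq\abs{E(G)}\leq T/2+2$, where $T$ is the total path length.
\item Equality forces $G$ to be $2$-regular with two doubled loops on disjoint components, and there the covariance vanishes. So $\var(\sum_kW_k^2)=\cO(N^{-1})$.
\end{itemize}
The extra bookkeeping for these mixed terms is what the paper's distinct-order argument avoids.

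Two small corrections.
\begin{itemize}
\item Every quantity you need, namely $\E(W_k^4)$ and $\var(\sum_kW_k^2)$, contains each edge at most four times. So the truncation of the loop statistics is unnecessary. If you keep it with $M$ fixed, you need an extra approximation step, because the truncated variances are then only close to $1$ up to an $M$-dependent error.
\item In your method-of-moments alternative, orthogonality of $X_N$ to the loop monomials gives only zero covariance, not asymptotic independence. Higher moments of $X_N$ are also unavailable under a fourth-moment hypothesis. The martingale route is the one to keep.
\end{itemize}
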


The proof uses a multivariate fourth-moment theorem of de Jong type~\cite{deJ90,DP17}. We first recall its framework. Let $X_1, \ldots, X_n$ be independent, and for $\cI\subseteq[n]$ let $\cF_{\cI} := \sigma(X_j : j \in \cI)$. Every square-integrable function $F$ of $X_1,\ldots,X_n$ has a unique \emph{Hoeffding decomposition} $F = \sum_{\cI \subseteq [n]} F_{\cI}$~\cite{Hoe48}. Here $F_{\cI}$ is $\cF_{\cI}$-measurable, and $\E(F_{\cI} \mid \cF_{\cI'}) = 0$ whenever $\cI \not\subseteq \cI'$. Explicitly, $F_{\cI} = \sum_{\cI' \subseteq \cI}(-1)^{\abs{\cI} - \abs{\cI'}}\E(F \mid \cF_{\cI'})$. Components with different index sets are orthogonal. We call $F$ a \emph{degenerate $U$-statistic of order $p$} if $F_{\cI}=0$ unless $\abs{\cI}=p$. Then $\E(F)=0$, and $\E(F \mid \cF_{\cI'}) = 0$ whenever $\abs{\cI'} < p$.

In our application the coordinates $X_j$ are the edge variables. The centered quadratic sum has order $1$, and a sum over loops of length $n$ of products of centered edges has order $n$. An additional truncation of the quadratic summands avoids an eighth-moment assumption.

\begin{lem}[Multivariate fourth-moment theorem~{\cite[Theorem 1.7]{DP17}}]\label{lem:DP}
	Fix $r \geq 2$ and integers $p_1, \ldots, p_r \geq 1$, and let $n_m \to \infty$. For each $m$, let $F_m = (F_m(1), \ldots, F_m(r))$ be a vector of centered, unit-variance random variables in $L^4$, built from independent random elements $X^{(m)}_1, \ldots, X^{(m)}_{n_m}$. Assume that each $F_m(k)$ is a degenerate $U$-statistic of order $p_k$, with Hoeffding decomposition $F_m(k) = \sum_{\abs\cI=p_k} F_{m,\cI}(k)$. Define the maximal influence of the $k$-th coordinate by
	\begin{align*}
		\mathrm{Infl}_k(F_m) := \max_{1 \leq j \leq n_m} \sum_{\abs{\cI} = p_k,\, \cI \ni j} \var\big(F_{m,\cI}(k)\big).
	\end{align*}
	Let $\Sigma \in \dR^{r \times r}$ be positive semi-definite with $\Sigma(k,k) = 1$ for every $k$, and let $\mvg = (\mvg(1), \ldots, \mvg(r)) \sim \N_r(0,\Sigma)$. Assume that, as $m \to \infty$,
	\begin{enumeratei}
	\item the covariance matrix of $F_m$ converges to $\Sigma$,
	\item $\mathrm{Infl}_k(F_m) \to 0$ for every $k$,
	\item $\E\big(F_m(k)^4\big) \to 3$ for every $k$, and
	\item $\E\big(F_m(j)^2 F_m(k)^2\big) \to \E\big(\mvg(j)^2 \mvg(k)^2\big)$ whenever $j \neq k$ and $p_j = p_k$.
	\end{enumeratei}
	Then $F_m \Longrightarrow \mvg$.
\end{lem}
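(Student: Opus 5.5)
We would not reprove Lemma~\ref{lem:DP} from scratch; it is imported from~\cite[Theorem~1.7]{DP17}. The plan below is the route we would take to prove it, namely Stein's method of exchangeable pairs adapted to degenerate $U$-statistics. A Cram\'er--Wold reduction to de Jong's univariate theorem~\cite{deJ90} is not available when the orders $p_k$ differ, because a linear combination of the coordinates is then no longer a homogeneous degenerate $U$-statistic.

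\textbf{Step 1 (exchangeable pair).} We choose an index $J$ uniformly from $[n_m]$, independent of everything else. We replace $X^{(m)}_J$ by an independent copy and call the resulting vector $F_m'$. By construction $(F_m,F_m')$ is exchangeable. We then use the Hoeffding decomposition. Resampling coordinate $j$ removes, in conditional mean, exactly the components $F_{m,\cI}(k)$ with $\cI\ni j$. Summing over $j$ gives the exact linear regression identity
\begin{align*}
\E\big(F_m'-F_m\mid X^{(m)}\big)=-\Lambda F_m,\qquad \Lambda:=\mathrm{diag}(p_1,\ldots,p_r)/n_m.
\end{align*}
This puts us in the setting of the multivariate exchangeable-pair theorem of Meckes (or Reinert--R\"ollin). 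For smooth test functions $h$ it gives
\begin{align*}
\abs{\E h(F_m)-\E h(\mvg)}\leq C_h\Big(\E\big\|\Lambda^{-1}\Gamma_m-2\Sigma\big\|
+\|\Lambda^{-1}\|\,\E\|F_m'-F_m\|^3\Big),
\end{align*}
where $\Gamma_m:=\E\big((F_m'-F_m)(F_m'-F_m)^{\intercal}\mid X^{(m)}\big)$. Convergence for smooth $h$ then yields $F_m\Longrightarrow\mvg$.

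\textbf{Step 2 (cubic term).} We write $F_m'-F_m$ as an average over $j$ of the differences $D_j$ obtained by resampling coordinate $j$. This gives $\|\Lambda^{-1}\|\E\|F_m'-F_m\|^3\lesssim\sum_j\E\|D_j\|^3$. We then apply H\"older's inequality, splitting the cube into the square root of $\sum_j\E\|D_j\|^2$ times the square root of $\sum_j\E\|D_j\|^4$. The factor $\sum_j\E\|D_j\|^2$ equals $2\sum_k p_k$. For the fourth moments we use hypercontractivity-free moment bounds for degenerate $U$-statistics, as in de Jong's argument. These bound $\sum_j\E D_j(k)^4$ by $C\big(\mathrm{Infl}_k(F_m)+\E F_m(k)^4-3\big)$ plus a term that vanishes with the influence. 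Hypotheses (ii) and (iii) then send the cubic term to zero.

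\textbf{Step 3 (conditional covariance, the main obstacle).} The expectation of $\Lambda^{-1}\Gamma_m/2$ is the covariance matrix of $F_m$, which tends to $\Sigma$ by (i). It therefore remains to show that each entry $\Lambda^{-1}\Gamma_m(k,l)$ has vanishing variance. We expand this entry as a sum over pairs of index sets $(\cI,\cI')$ with $\cI\cap\cI'\neq\emptyset$ of products $F_{m,\cI}(k)F_{m,\cI'}(l)$. We then compute its variance as a sum of fourth-order mixed moments indexed by the configurations of four index sets.

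The key inequality, which is de Jong's combinatorial lemma in its multivariate form, is that the same configurations appear in the fourth cumulants. For the diagonal entries this gives
\begin{align*}
\var\big(\Lambda^{-1}\Gamma_m(k,k)\big)\leq C\big(\E F_m(k)^4-3+\mathrm{Infl}_k(F_m)\big).
\end{align*}
For $k\neq l$ with $p_k=p_l$ we get the analogous bound in terms of $\E F_m(k)^2F_m(l)^2-\E\mvg(k)^2\mvg(l)^2$, together with the diagonal quantities. For $p_k\neq p_l$, orthogonality of Hoeffding components with different index sets gives $\Sigma(k,l)=0$. In that case Cauchy--Schwarz bounds the off-diagonal variance by the diagonal fourth cumulants and the influences, so no mixed moment condition is needed, which is why (iv) is only assumed for $p_j=p_k$.

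Verifying that the nonnegative "non-Gaussian" contractions dominate all cross terms, up to influence errors, is the delicate bookkeeping. This is where we expect the main work. We would first treat $r=2$ and then reduce the general case to it entrywise.
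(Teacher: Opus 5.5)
Your proposal is correct and agrees with the paper, which gives no proof of Lemma~\ref{lem:DP} and simply imports it from \cite[Theorem~1.7]{DP17}, exactly as you do. Your exchangeable-pair outline is the method of \cite{DP17}: the exact regression $\E(F_m'-F_m\mid X^{(m)})=-\Lambda F_m$ with $\Lambda=\mathrm{diag}(p_1,\ldots,p_r)/n_m$, followed by fourth-moment and influence control of the cubic term and of the fluctuations of the conditional covariance; the two hard estimates in Steps~2--3 are only asserted, which is acceptable here since nothing beyond the citation is required.
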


We verify the moment and influence conditions separately for the loop and quadratic coordinates.

\begin{lem}[Loop moments]\label{lem:loop-chaos}
	Assume the hypotheses of Proposition~\ref{prop:array-joint-clt}, and fix $3\leq n,n'\leq\ell$.
	\begin{enumeratei}
		\item The loop statistics are centered, and distinct lengths are orthogonal:
		\begin{align}
			\E\big(Y_{N,n}(\cJ_N)Y_{N,n'}(\cJ_N)\big)=0\quad(n\ne n').
			\label{eq:Wcov}
		\end{align}
		Their variances satisfy
		\begin{align}
			\E\big(Y_{N,n}(\cJ_N)^2\big)\longrightarrow\frac1{2n}.
			\label{eq:loop-variances}
		\end{align}
		\item For a constant $C$ independent of $N$,
		\begin{align}
			\left|\E\big(Y_{N,n}(\cJ_N)^4\big)-3\,\E\big(Y_{N,n}(\cJ_N)^2\big)^2\right|\leq\frac C N.
			\label{eq:fourth}
		\end{align}
		\item The statistic $Y_{N,n}(\cJ_N)$ is a degenerate $U$-statistic of order $n$ with maximal edge influence $\cO(N^{-2})$. The same holds after variance normalization, for large $N$.
	\end{enumeratei}
\end{lem}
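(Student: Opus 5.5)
The plan is to expand each moment as a sum over tuples of loops and use independence and centering of the edges: a product $\prod_{e}\cJ_{N,e}^{k_e}$ has zero expectation unless every edge with $k_e\ge1$ satisfies $k_e\geq2$. For (i), $\E(Y_{N,n})=0$ because every loop has edges of multiplicity one. For $\E(Y_{N,n}Y_{N,n'})$ the two loops must coincide as edge sets, which forces $n=n'$; this proves~\eqref{eq:Wcov}. For $n=n'$ we get
\begin{align*}
	\E\big(Y_{N,n}^2\big)=N^{-n}\sum_{\gamma\in\cL_n}\prod_{e\in\gamma}\E(\cJ_{N,e}^2).
\end{align*}
By~\eqref{eq:array-moment-limits} each product is $1+o(1)$ uniformly, because $n$ is fixed. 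Since $|\cL_n|=(N)_n/(2n)$, the sum tends to $1/(2n)$, which is~\eqref{eq:loop-variances}.

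For (ii), expand the fourth moment as a sum over quadruples $(\gamma_1,\ldots,\gamma_4)$ of loops in $\cL_n$ whose multiset union covers every edge at least twice. There are two cases.
\begin{enumeratea}
	\item The loops pair up as $\gamma_a=\gamma_b$, $\gamma_c=\gamma_d$ with $\{\gamma_a,\gamma_b\}\ne\{\gamma_c,\gamma_d\}$ as pairs of edge sets. Call these the pairing configurations. When the two paired loops are edge-disjoint, the expectation factorizes exactly. There are three pairings, and together they reproduce $3\E(Y_{N,n}^2)^2$, minus the terms where $\gamma_a=\gamma_c$ or where the two loops share edges.
	\item All remaining configurations. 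These, together with the corrections from (a), must be shown to contribute $\cO(N^{-1})$.
\end{enumeratea}
The counting is done on the union graph $G=\bigcup_a\gamma_a$, with $v$ vertices and $r$ edges. Each edge carries multiplicity at least two, and multiplicities sum to $4n$, so $r\le 2n$. The number of such quadruples is $\cO(N^{v})$. The per-term weight is $N^{-2n}$ times a product of moments $\E(\cJ^{k})$ with $k\le4$, and these are bounded uniformly by~\eqref{eq:array-fourth-integrability}; a multiplicity up to $4$ on one edge is allowed.

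The key combinatorial claim is that $v\le 2n-1$ unless $G$ is a disjoint union of two $n$-cycles, each traversed twice, which is exactly case (a) with edge-disjoint loops. To prove it I would use that each $\gamma_a$ is a cycle, so $G$ has minimum degree two. Then $v\le r$ per connected component, with equality only for components that are single cycles. Since $r\le2n$, we get $v\le 2n$, with equality only if every edge has multiplicity exactly two and $G$ is a union of cycles. A simple cycle of length $n$ lying in a union of vertex-disjoint cycles must equal one of them. Hence the four loops form two coincident pairs on vertex-disjoint cycles. Every other configuration has $v\le2n-1$ and contributes $\cO(N^{-1})$, and the overlap corrections in (a) are $\cO(N^{-1})$ for the same reason. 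This gives~\eqref{eq:fourth}. I expect this combinatorial bookkeeping to be the main obstacle, in particular handling edges of multiplicity three or four cleanly.

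For (iii), a product of $n$ distinct centered independent edges is its own Hoeffding component indexed by the edge set, so $Y_{N,n}$ is a degenerate $U$-statistic of order $n$. The influence of an edge $e$ is
\begin{align*}
	\sum_{\gamma\ni e}N^{-n}\prod_{f\in\gamma}\E(\cJ_{N,f}^2).
\end{align*}
The number of $n$-loops through a fixed edge is $\cO(N^{n-2})$, so the influence is $\cO(N^{-2})$. Normalization divides by a variance tending to $1/(2n)>0$, which keeps the bound $\cO(N^{-2})$ for large $N$.
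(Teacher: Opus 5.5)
Your proposal is correct and matches the paper's proof. It uses the same orthogonality argument for (i), the same union-graph bound $v\le r\le 2n$ for (ii), with equality only for two vertex-disjoint $n$-loops each taken twice (multiplicities three or four force $r\le 2n-1$, so they are not an obstacle), and the same count of $\cO(N^{n-2})$ loops through an edge for (iii). The only difference is bookkeeping in (ii): the paper groups $\E(Y^4)-3\E(Y^2)^2$ into a per-quadruple kernel $\cK(\mvq)$ that vanishes when $v=2n$, whereas you match the $v=2n$ terms of $\E(Y^4)$ and $3\E(Y^2)^2$ directly and bound everything else by $\cO(N^{-1})$.
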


\begin{proof}
	(i) Two distinct loops have an edge that lies in only one of them. Independence and centering of the edges therefore make their monomials orthogonal, which proves~\eqref{eq:Wcov}. Moreover,
	\begin{align*}
		\E\big(Y_{N,n}(\cJ_N)^2\big)=\frac1{N^n}\sum_{\gamma\in\cL_n}\prod_{e\in\gamma}\E\big(\cJ_{N,e}^2\big)
		\longrightarrow\frac1{2n},
	\end{align*}
	since $\abs{\cL_n}=(N)_n/(2n)$ and the edge variances converge uniformly to $1$.

	(ii) In this proof write $Y:=Y_{N,n}(\cJ_N)$, $\tau_e:=\cJ_{N,e}/\sqrt N$, and $\tau^\gamma:=\prod_{e\in\gamma}\tau_e$. Uniform integrability implies $\sup_{N,e}\E(\cJ_{N,e}^4)<\infty$, so $\E(\abs{\tau_e}^p)\leq C N^{-p/2}$ for $2\leq p\leq4$. We expand over quadruples $\mvq=(\gamma_1,\ldots,\gamma_4)\in\cL_n^4$:
	\begin{align*}
		\E(Y^4)-3\,\E(Y^2)^2=\sum_{\mvq\in\cL_n^4}\cK(\mvq),\quad
		\cK(\mvq):=\E\Big(\prod_{i=1}^4\tau^{\gamma_i}\Big)
		-\sum_{\{\{i,j\},\{k,l\}\}}\E\big(\tau^{\gamma_i}\tau^{\gamma_j}\big)\,\E\big(\tau^{\gamma_k}\tau^{\gamma_l}\big),
	\end{align*}
	where the inner sum runs over the three pairings of $[4]$.
	Let $\Gamma := \gamma_1 \cup \cdots \cup \gamma_4$, and let $\nu_e$ be the number of loops $\gamma_i$ that contain $e$. Then $\sum_{e \in \Gamma} \nu_e = 4n$ and $1 \leq \nu_e \leq 4$. By independence, $\E\big(\prod_i \tau^{\gamma_i}\big) = \prod_{e \in \Gamma}\E(\tau_e^{\nu_e})$. If some $\nu_e=1$, the joint expectation vanishes. Each pairing term vanishes as well, since a nonzero pairing term requires two pairs of identical loops, and then every edge has even multiplicity. Hence $\cK(\mvq) = 0$ unless $\nu_e \geq 2$ for all $e$. In that case the joint term and each nonzero pairing term are bounded in absolute value by
	\begin{align*}
		\prod_{e \in \Gamma} C N^{-\nu_e/2} = C^{\abs\Gamma} N^{-2n} \leq C(n) N^{-2n},
	\end{align*}
	so $\abs{\cK(\mvq)} \leq C(n)N^{-2n}$. Since $\nu_e \geq 2$ for all $e$, we have $\abs\Gamma \leq 2n$. Moreover, $\Gamma$ is a union of cycles, so every vertex of $\Gamma$ has degree at least $2$. Hence $\abs{V(\Gamma)} \leq \abs\Gamma \leq 2n$, with equality in the first inequality exactly when $\Gamma$ is a vertex-disjoint union of cycles.

	Suppose that $\abs{V(\Gamma)} = 2n$. Then $\abs\Gamma = 2n$, hence $\nu_e=2$ for every $e\in\Gamma$, and $\Gamma$ is a vertex-disjoint union of cycles. Each $\gamma_i$ is then a whole component of $\Gamma$. So every component of $\Gamma$ is one of the $\gamma_i$ and has length $n$, and $\abs\Gamma = 2n$ leaves exactly two components. In other words, $\mvq$ consists of two vertex-disjoint loops $\gamma \neq \gamma'$, each occurring twice. For such $\mvq$ the joint term is $\E\big((\tau^{\gamma})^2\big) \E\big((\tau^{\gamma'})^2\big)$. Exactly one of the three pairings matches the two equal pairs and contributes the same value, and the other two vanish. Hence $\cK(\mvq) = 0$.

	Only quadruples with $\abs{V(\Gamma)} \leq 2n-1$ therefore contribute. The number of quadruples with a prescribed isomorphism type of $(\gamma_1,\ldots,\gamma_4)$ is at most $C(n)N^{\abs{V(\Gamma)}}$, and the number of types is bounded in terms of $n$. So these quadruples contribute at most $C(n)N^{2n-1}\cdot C(n)N^{-2n} = C(n)/N$, which proves (ii).

	(iii) The normalized loop monomials $N^{-n/2}\prod_{e\in\gamma}\cJ_{N,e}$ are Hoeffding components of order $n$. A fixed edge belongs to $\cO(N^{n-2})$ loops, each with variance $\cO(N^{-n})$, so its influence is $\cO(N^{-2})$. Since the variances have positive limits, this bound persists after normalization.
\end{proof}

For the quadratic coordinate, set
\begin{align}
	\cJ_{N,e}^{(M)}:=\cJ_{N,e}\ind\{\abs{\cJ_{N,e}}<M\}
	\quad\text{and}\quad
	X_N^{(M)}(\cJ_N):=\frac1N\sum_{e\in E_N}\Big((\cJ_{N,e}^{(M)})^2-\E\big((\cJ_{N,e}^{(M)})^2\big)\Big).
	\label{eq:quadratic-fixed-truncation}
\end{align}
This additional cutoff is used only for the quadratic coordinate, and the loop coordinates remain unchanged. It gives a finite fourth moment of $X_N^{(M)}$ without an eighth-moment assumption on the edge variables.

\begin{lem}[Quadratic moments]\label{lem:quadratic-moments}
	Assume the hypotheses of Proposition~\ref{prop:array-joint-clt}. Then $\var\big(X_N(\cJ_N)\big)$ converges to $v$, and for every $M\geq1$,
	\begin{align}
		\E\Big(\abs{X_N(\cJ_N)-X_N^{(M)}(\cJ_N)}^2\Big)
		&\leq\frac1{N^2}\sum_{e\in E_N}\E\big(\cJ_{N,e}^4\ind\{\abs{\cJ_{N,e}}\geq M\}\big),
		\label{eq:quadratic-truncation-error}\\
		\left|\E\big(X_N^{(M)}(\cJ_N)^4\big)-3\var\big(X_N^{(M)}(\cJ_N)\big)^2\right|
		&\leq\frac{CM^4}{N^2},
		\label{eq:quadratic-cutoff-fourth}
	\end{align}
	where $C$ does not depend on $N$ or $M$. The cutoff statistic is a degenerate $U$-statistic of order $1$, with maximal edge influence $\cO(N^{-2})$ uniformly in $M$.
\end{lem}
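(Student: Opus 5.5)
Write $W_e:=\cJ_{N,e}^2-\E(\cJ_{N,e}^2)$ and $W_e^{(M)}:=(\cJ_{N,e}^{(M)})^2-\E\big((\cJ_{N,e}^{(M)})^2\big)$. Both $X_N$ and $X_N^{(M)}$ are $N^{-1}$ times sums of independent centered summands indexed by edges, so every claim reduces to moment computations for such sums.

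\emph{Variance.} By independence,
\[
\var(X_N)=N^{-2}\sum_e\var(\cJ_{N,e}^2),
\]
which converges to $v$ by the second condition in~\eqref{eq:array-moment-limits}.

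\emph{Truncation error.} The difference $X_N-X_N^{(M)}$ equals $N^{-1}\sum_e D_e$, where
\[
D_e:=\cJ_{N,e}^2\ind\{\abs{\cJ_{N,e}}\geq M\}-\E\big(\cJ_{N,e}^2\ind\{\abs{\cJ_{N,e}}\geq M\}\big).
\]
The $D_e$ are independent and centered, so
\[
\E\big(\abs{X_N-X_N^{(M)}}^2\big)=N^{-2}\sum_e\var(D_e)\leq N^{-2}\sum_e\E\big(\cJ_{N,e}^4\ind\{\abs{\cJ_{N,e}}\geq M\}\big),
\]
using that the variance is bounded by the second moment. This is~\eqref{eq:quadratic-truncation-error}.

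\emph{Fourth cumulant.} For a sum $S=\sum_e Y_e$ of independent centered variables,
\[
\E(S^4)-3\var(S)^2=\sum_e\big(\E(Y_e^4)-3\E(Y_e^2)^2\big),
\]
since the fourth cumulant is additive, or equivalently since all mixed terms with a single index vanish and the pairings cancel. Take $Y_e=W_e^{(M)}/N$. Then $\abs{W_e^{(M)}}\leq M^2$, and uniform integrability gives $\sup\E(\cJ_{N,e}^4)\leq C$. Hence
\[
\E\big((W_e^{(M)})^4\big)\leq M^4\,\E\big((W_e^{(M)})^2\big)\leq CM^4,
\]
and likewise $\E\big((W_e^{(M)})^2\big)^2\leq C$. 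Summing over the $\binom N2$ edges with the factor $N^{-4}$ yields a bound of order $CM^4/N^2$, which is~\eqref{eq:quadratic-cutoff-fourth}, with $C$ independent of $N$ and $M$ since $M\geq1$.

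\emph{Hoeffding structure and influence.} Each summand $W_e^{(M)}/N$ is centered and depends on the single edge variable $\cJ_{N,e}$. It is therefore its own Hoeffding component of order $1$, and $X_N^{(M)}$ is a degenerate $U$-statistic of order $1$. The influence of edge $e$ is
\[
N^{-2}\var\big((\cJ_{N,e}^{(M)})^2\big)\leq N^{-2}\E(\cJ_{N,e}^4)\leq CN^{-2},
\]
which is uniform in $M$ because truncation only decreases the fourth moment.

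None of these steps is a real obstacle. The only point needing care is the cumulant identity, and in particular tracking the powers of $M$ and $N$ there so that the bound stays uniform.
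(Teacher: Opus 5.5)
Your proposal is correct and follows the paper's proof essentially step for step: the same independence computations for the variance and the truncation error, the same additivity of the fourth cumulant for independent centered summands, and each summand treated as its own order-one Hoeffding component. The one cosmetic difference is that you bound $\E\big((W_e^{(M)})^4\big)$ using $\abs{W_e^{(M)}}\leq M^2$, where the paper uses $16\,\E\big((\cJ_{N,e}^{(M)})^8\big)\leq CM^4$; both give the same $CM^4/N^2$ bound.
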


\begin{proof}
	Independence gives $\var\big(X_N(\cJ_N)\big)=N^{-2}\sum_e\var(\cJ_{N,e}^2)\to v$. The cutoff error is $N^{-1}$ times the centered sum of the independent variables $\cJ_{N,e}^2\ind\{\abs{\cJ_{N,e}}\geq M\}$, which gives~\eqref{eq:quadratic-truncation-error}.

	Write $Q_e:=(\cJ_{N,e}^{(M)})^2-\E\big((\cJ_{N,e}^{(M)})^2\big)$. The uniform fourth-moment bound gives $\E(Q_e^2)\leq C$ and
	$\E(Q_e^4)\leq16\,\E\big((\cJ_{N,e}^{(M)})^8\big)\leq CM^4$. By independence and centering,
	\begin{align*}
		\E\big(X_N^{(M)}(\cJ_N)^4\big)-3\var\big(X_N^{(M)}(\cJ_N)\big)^2
		&=\frac1{N^4}\E\Big(\Big(\sum_e Q_e\Big)^4\Big)-\frac3{N^4}\,\E\Big(\Big(\sum_e Q_e\Big)^2\Big)^2\\
		&=\frac1{N^4}\sum_e\Big(\E(Q_e^4)-3\,\E(Q_e^2)^2\Big)
		=\cO(M^4N^{-2}).
	\end{align*}
	Each $Q_e/N$ is a Hoeffding component of order $1$ with variance at most $C/N^2$, which proves the influence bound.
\end{proof}

\begin{proof}[Proof of Proposition~\ref{prop:array-joint-clt}]
	We may take $\ell\geq4$, since smaller vectors follow by projection. This guarantees at least two loop coordinates, so that Lemma~\ref{lem:DP} (which needs $r\geq2$) applies also when $v=0$. Choose $M_N\to\infty$ with $M_N=o(\sqrt N)$. By Lemma~\ref{lem:quadratic-moments} and~\eqref{eq:array-fourth-integrability},
	\begin{align*}
		X_N(\cJ_N)-X_N^{(M_N)}(\cJ_N)\to0\quad\text{in }L^2
		\quad\text{and}\quad \var\big(X_N^{(M_N)}(\cJ_N)\big)\to v.
	\end{align*}
	If $v>0$, apply Lemma~\ref{lem:DP} to the variance-normalized coordinates of
	\begin{align*}
		\big(X_N^{(M_N)}(\cJ_N),Y_{N,3}(\cJ_N),\ldots,Y_{N,\ell}(\cJ_N)\big).
	\end{align*}
	Their orders $1,3,\ldots,\ell$ are distinct. So the covariance matrix of the normalized vector is the identity, and condition~(iv) is vacuous. Lemmas~\ref{lem:loop-chaos} and~\ref{lem:quadratic-moments} give vanishing influences and fourth moments tending to $3$. For the quadratic coordinate the error is $\cO(M_N^4/N^2)=o(1)$. The resulting independent Gaussian limits and the $L^2$ approximation prove the claim. If $v=0$, then $X_N(\cJ_N)\to0$ in $L^2$, and Lemma~\ref{lem:DP} applies to the loop coordinates alone.
\end{proof}

\subsection{Tilted disorder and mean shifts}\label{sec:general-tilted}

We now verify these assumptions for the two disorder laws. Under the product law, both $(J_e)$ and $(\wt J_e)$ satisfy them with $v=(\E(J^4)-1)/2$, by the assumptions on $J$ and by~\eqref{eq:truncated-second-moment}--\eqref{eq:truncated-fourth-tail}. For the tilted law we first center the edges.

For a fixed configuration $\mvgs\in\Sigma_N$, define the tilted disorder law by
\begin{align}
	\frac{d\pr_N^{\mathrm{tilt},\mvgs}}{d\pr}:=e_N(\mvgs).
	\label{eq:general-planted-law}
\end{align}
This is a probability law since $\E\big(e_N(\mvgs)\big)=1$, and we write $\E^{\mathrm{tilt},\mvgs}(\cdot)$ for its expectation. It is the law of the disorder ``planted'' at $\mvgs$, that is, size-biased by the Boltzmann weight of $\mvgs$. The configuration $\mvgs$ is deterministic throughout. Define the centered edges by
\begin{align*}
	\wt J_e^\circ:=\wt J_e-\E^{\mathrm{tilt},\mvgs}\big(\wt J_e\big).
\end{align*}

\begin{lem}[Tilted edge moments]\label{lem:tilted-edge-moments}
	Fix $\beta>0$ and $\mvgs\in\Sigma_N$. Under $\pr_N^{\mathrm{tilt},\mvgs}$ the edges are independent. Moreover, uniformly over edges and configurations,
	\begin{align}
		\E^{\mathrm{tilt},\mvgs}\big(\wt J_e\big)&=\frac{\beta}{\sqrt N}A_e(\mvgs)+\cO(N^{-3/2}),
		\label{eq:tilted-edge-mean}\\
		\E^{\mathrm{tilt},\mvgs}\big(\wt J_e^2\big)
		&=1+\frac{(\E(J^4)-1)\beta^2}{2N}A_e(\mvgs)^2+o(N^{-1}),
		\label{eq:general-planted-edge-moments}
	\end{align}
	and
	\begin{align*}
		\sup_{\mvgs,e}\left|\E^{\mathrm{tilt},\mvgs}\big(\wt J_e^4\big)-\E(J^4)\right|\to0.
	\end{align*}
	For the centered edges $\wt J_e^\circ$,
	\begin{align*}
		\sup_{\mvgs,e}\left|\E^{\mathrm{tilt},\mvgs}\big((\wt J_e^\circ)^2\big)-1\right|\to0
		\quad\text{and}\quad
		\sup_{\mvgs,e}\left|\E^{\mathrm{tilt},\mvgs}\big((\wt J_e^\circ)^4\big)-\E(J^4)\right|\to0,
	\end{align*}
	and their fourth powers are uniformly integrable:
	\begin{align*}
		\lim_{M\to\infty}
		\sup_{N,\mvgs,e}\E^{\mathrm{tilt},\mvgs}\Big((\wt J_e^\circ)^4\ind\{\abs{\wt J_e^\circ}\geq M\}\Big)=0.
	\end{align*}
\end{lem}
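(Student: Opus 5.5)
The plan is to reduce everything to a one-dimensional exponential tilt of a single truncated edge. Since $\wt H_N(\mvgs)=\sum_e z_e\wt J_e$ with $z_e:=\beta A_e(\mvgs)/\sqrt N$, and $c_N(\mvgs)=\sum_e\ln\E(e^{z_e\wt J_e})$, the density factorizes:
\begin{align*}
	e_N(\mvgs)=\prod_{e\in E_N}\frac{e^{z_e\wt J_e}}{\E\big(e^{z_e\wt J_e}\big)}.
\end{align*}
The $\wt J_e$ are independent under $\pr$, so they remain independent under $\pr_N^{\mathrm{tilt},\mvgs}$. Each edge has the law of $\wt J$ tilted by $e^{z\wt J}/\E(e^{z\wt J})$ with $z=z_e$, and $\abs{z}\leq C/\sqrt N$ uniformly in $\mvgs$ and $e$, since $\abs{A_e}\leq\gk-1$. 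It therefore suffices to prove, uniformly in $\abs z\leq C/\sqrt N$, the corresponding statements for $\E(\wt J^k e^{z\wt J})/\E(e^{z\wt J})$ with $k=1,2,4$.

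For the moments I Taylor expand $e^{z\wt J}$ and use that $\abs{z\wt J}\leq C\eps_N\to0$, that $\wt J$ is symmetric, and the moment bounds~\eqref{eq:truncated-second-moment}--\eqref{eq:truncated-fourth-moment} together with $\E(\abs{\wt J}^6)\leq\eps_N^2N\E(J^4)$.

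\begin{enumeratei}
	\item \emph{First moment.} $\E(\wt Je^{z\wt J})=z\E(\wt J^2)+\cO(\abs z^3\E\abs{\wt J}^4)=z+o(N^{-3/2})+\cO(N^{-3/2})$. The denominator is $1+\cO(N^{-1})$, which gives~\eqref{eq:tilted-edge-mean}.
	\item \emph{Second moment.} $\E(\wt J^2e^{z\wt J})=\E\wt J^2+\tfrac{z^2}{2}\E\wt J^4+\cO(\abs z^3\E\abs{\wt J}^5)$. The odd term vanishes by symmetry. The remainder is bounded via $\E\abs{\wt J}^5\leq\eps_N\sqrt N\E J^4$, giving $o(N^{-1})$. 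Dividing by $1+z^2/2+o(N^{-1})$ yields $1+\tfrac{z^2}{2}(\E J^4-1)+o(N^{-1})$, which is~\eqref{eq:general-planted-edge-moments} after substituting $z^2=\beta^2A_e^2/N$.
	\item \emph{Fourth moment.} I use the bound $e^{z\wt J}\leq e^{C\eps_N}\to1$ and $\geq e^{-C\eps_N}$ pointwise. The tilted law then has density within $e^{\pm2C\eps_N}$ of $1$ relative to the law of $\wt J$. Hence $\E^{\mathrm{tilt}}(\wt J^4)=(1+o(1))\E(\wt J^4)\to\E(J^4)$. The same density comparison transfers the tail bound~\eqref{eq:truncated-fourth-tail} to the tilted law, up to a factor $2$.
\end{enumeratei}

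The centered edges are handled with the mean shift $b_e:=\E^{\mathrm{tilt}}(\wt J_e)=\cO(N^{-1/2})$. For the variance, $\E^{\mathrm{tilt}}((\wt J_e^\circ)^2)=\E^{\mathrm{tilt}}(\wt J_e^2)-b_e^2\to1$ uniformly. For the fourth moment, I expand $(\wt J_e-b_e)^4$ and bound the cross terms by Hölder using the bounded tilted fourth moments; they are $\cO(N^{-1/2})$. For uniform integrability, on $\{\abs{\wt J_e^\circ}\geq M\}$ we have $\abs{\wt J_e}\geq M-1$ once $\abs{b_e}\leq1$, and $(\wt J_e^\circ)^4\leq 8\wt J_e^4+8b_e^4$. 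The claim then follows from the tilted tail bound together with $\pr^{\mathrm{tilt}}(\abs{\wt J_e}\geq M-1)\leq C/(M-1)^4$.

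The only delicate point is the $o(N^{-1})$ error in~\eqref{eq:general-planted-edge-moments}. Because it must be summed over $N^2$ edges later, it has to come from the truncation estimates rather than from a crude sixth-moment bound; this is the step I would check most carefully.
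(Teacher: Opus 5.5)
Your proposal is correct and follows essentially the same route as the paper: you factorize $e_N(\mvgs)$ over edges, reduce to one-edge tilts with $\abs{z}\leq C/\sqrt N$, and Taylor-expand using symmetry and the truncation bounds; the centered edges are then handled through the $\cO(N^{-1/2})$ mean shift and the uniformly bounded density ratio. The differences are cosmetic. You control the second-moment remainder via $\E(\abs{\wt J}^5)\leq\eps_N\sqrt N\,\E(J^4)$ instead of $\E(\wt J^6)=o(N)$, and the fourth moment via the pointwise density bound $e^{\pm2C\eps_N}$ instead of a Taylor expansion. For uniform integrability over all $N$, state that $\abs{b_e}$ is uniformly bounded, which follows from the bounded density ratio, rather than $\abs{b_e}\leq1$ only for large $N$; this is what the paper does.
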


\begin{proof}
	The density factorizes as
	\begin{align}
		e_N(\mvgs)=\prod_{e\in E_N}
		\frac{\exp\{\beta A_e(\mvgs)\wt J_e/\sqrt N\}}
		{\E\big(\exp\{\beta A_e(\mvgs)\wt J/\sqrt N\}\big)},
		\label{eq:general-planted-edges}
	\end{align}
	so the tilted edges are independent. It suffices to estimate the one-edge moment ratios uniformly for $\abs z\leq C/\sqrt N$, and then to substitute $z=\beta A_e(\mvgs)/\sqrt N$. On the truncated support, $\abs{z\wt J}\leq C\eps_N\to0$. Symmetry and Taylor expansion give
	\begin{align*}
		\E\big(e^{z\wt J}\big)&=1+\frac{z^2}{2}\E\big(\wt J^2\big)+\cO(N^{-2}),
		&\E\big(\wt J e^{z\wt J}\big)&=z\E\big(\wt J^2\big)+\cO(N^{-3/2}),\\
		\E\big(\wt J^2e^{z\wt J}\big)&=\E\big(\wt J^2\big)+\frac{z^2}{2}\E\big(\wt J^4\big)+o(N^{-1}),
		&\E\big(\wt J^4e^{z\wt J}\big)&=\E\big(\wt J^4\big)+o(1).
	\end{align*}
	The remainders follow from $\E(\wt J^6)=o(N)$ and $\abs{z\wt J}\leq C\eps_N\to0$. Dividing by $\E\big(e^{z\wt J}\big)$ and using~\eqref{eq:truncated-second-moment} and~\eqref{eq:truncated-fourth-moment}, we obtain
	\begin{align*}
		\frac{\E\big(\wt J e^{z\wt J}\big)}{\E\big(e^{z\wt J}\big)}&=z\E\big(\wt J^2\big)+\cO(N^{-3/2})=z+\cO(N^{-3/2}),\\
		\frac{\E\big(\wt J^2e^{z\wt J}\big)}{\E\big(e^{z\wt J}\big)}
		&=\E\big(\wt J^2\big)+\frac{z^2}{2}\Big(\E\big(\wt J^4\big)-\E\big(\wt J^2\big)^2\Big)+o(N^{-1})
		=1+\frac{\E(J^4)-1}{2}z^2+o(N^{-1}).
	\end{align*}
	This proves~\eqref{eq:tilted-edge-mean}--\eqref{eq:general-planted-edge-moments}. The tilted fourth moment also converges uniformly to $\E(J^4)$ by~\eqref{eq:truncated-fourth-moment}. Write $\mu_e:=\E^{\mathrm{tilt},\mvgs}(\wt J_e)=\cO(N^{-1/2})$. Expanding the powers and using H\"older's inequality with the uniform fourth-moment bound, we get
	\begin{align*}
		\E^{\mathrm{tilt},\mvgs}\big((\wt J_e^\circ)^2\big)-\E^{\mathrm{tilt},\mvgs}\big(\wt J_e^2\big)
		&=-\mu_e^2=\cO(N^{-1}),\\
		\left|\E^{\mathrm{tilt},\mvgs}\big((\wt J_e^\circ)^4\big)-\E^{\mathrm{tilt},\mvgs}\big(\wt J_e^4\big)\right|
		&\leq C\big(\abs{\mu_e}+\mu_e^2+\mu_e^4\big)=\cO(N^{-1/2}).
	\end{align*}
	Thus centering preserves both moment limits, uniformly over edges and configurations.

	For large $M$, the event $\abs{\wt J_e^\circ}\geq M$ implies $\abs{\wt J_e}\geq M/2$ and $\abs{\wt J_e^\circ}\leq2\abs{\wt J_e}$, since the centering constants are uniformly bounded. The one-edge density ratios in~\eqref{eq:general-planted-edges} are uniformly bounded, so~\eqref{eq:truncated-fourth-tail} gives
	\begin{align*}
		\E^{\mathrm{tilt},\mvgs}\Big((\wt J_e^\circ)^4\ind\{\abs{\wt J_e^\circ}\geq M\}\Big)
		\leq C\,\E\big(J^4\ind\{\abs J\geq M/2\}\big)\to0.
	\end{align*}
	This proves uniform integrability, uniformly in $N$, the edge, and the configuration.
\end{proof}

In the applications below, we abbreviate
\begin{align}
	\wt X_N:=X_N(\wt J)
	\quad\text{and}\quad
	\wt Y_{N,n}:=Y_{N,n}(\wt J),
	\label{eq:truncated-loop-statistics}
\end{align}
where the centering in $X_N(\wt J)$ is under the original product law and remains fixed when the law is tilted. For a fixed configuration $\mvgs$, we also write
\begin{align}
	\wt X_N^\circ&:=X_N(\wt J^\circ)
	=\frac1N\sum_{e\in E_N}\Big((\wt J_e^\circ)^2-\E^{\mathrm{tilt},\mvgs}\big((\wt J_e^\circ)^2\big)\Big)
	\quad\text{and}\quad
	\wt Y_{N,n}^\circ:=Y_{N,n}(\wt J^\circ).
	\label{eq:centered-loop-statistics}
\end{align}
The tilde records the truncation of the disorder, and the circle records the centering of the edges under the tilted law. Thus $\E(\wt X_N)=0$ and $\E^{\mathrm{tilt},\mvgs}(\wt X_N^\circ)=\E^{\mathrm{tilt},\mvgs}(\wt Y_{N,n}^\circ)=0$.

The next lemma computes the mean shifts caused by the tilt. The loop mean shift has a transparent origin. To leading order, each tilted edge has mean $\beta A_e(\mvgs)/\sqrt N$, so the product along a loop has mean $\beta^n N^{-n/2}\prod_{e\in\gamma}A_e(\mvgs)$. Summing over loops produces $\tr(\Pi^n)=\gk-1$, because $N^{-1}\sum_i\phi(\sigma_i)\phi(\sigma_i)^{\intercal}\to\Pi$ for near-balanced $\mvgs$.

\begin{lem}[Approximations and mean shifts]\label{lem:tilted-approx}
	Fix $\beta>0$ and $\ell\geq3$.
	\begin{enumeratei}
		\item Uniformly in $\mvgs\in\Sigma_N$,
		\begin{align}
			\E^{\mathrm{tilt},\mvgs}\Big(\big(\wt X_N-\E^{\mathrm{tilt},\mvgs}(\wt X_N)-\wt X_N^\circ\big)^2\Big)&\leq C N^{-1},
			\label{eq:quadratic-tilted-shift}\\
			\E^{\mathrm{tilt},\mvgs}\Big(\big(\wt Y_{N,n}-\E^{\mathrm{tilt},\mvgs}(\wt Y_{N,n})-\wt Y_{N,n}^\circ\big)^2\Big)
			&\leq C(\ell)N^{-1}\quad(3\leq n\leq\ell).
			\label{eq:tilted-loop-remainder}
		\end{align}
		\item For $\mvgs=\mvgs_N\in\Sigma_N^{(\mvn_N)}$ with $\sup_N\norm{m_{\mvn_N}}_2<\infty$,
		\begin{align*}
			\E^{\mathrm{tilt},\mvgs}\big(\wt X_N\big)\to\frac{(\gk-1)\beta^2}{4}\big(\E(J^4)-1\big)
			\quad\text{and}\quad
			\E^{\mathrm{tilt},\mvgs}\big(\wt Y_{N,n}\big)\to\frac{(\gk-1)\beta^n}{2n}.
		\end{align*}
	\end{enumeratei}
\end{lem}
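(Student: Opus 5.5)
Everything below is computed under the tilted law, where by Lemma~\ref{lem:tilted-edge-moments} the edges are independent with means $\mu_e:=\E^{\mathrm{tilt},\mvgs}(\wt J_e)=\beta A_e(\mvgs)/\sqrt N+\cO(N^{-3/2})$, uniformly bounded fourth moments, and centered versions $\wt J_e^\circ=\wt J_e-\mu_e$. The plan is to substitute $\wt J_e=\wt J_e^\circ+\mu_e$ into both statistics and expand.

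\textbf{Quadratic statistic.} Writing $\wt J_e^2=(\wt J_e^\circ)^2+2\mu_e\wt J_e^\circ+\mu_e^2$, we get
\[
\wt X_N-\E^{\mathrm{tilt},\mvgs}(\wt X_N)-\wt X_N^\circ=\frac2N\sum_e\mu_e\wt J_e^\circ .
\]
Its second moment is $4N^{-2}\sum_e\mu_e^2\E^{\mathrm{tilt},\mvgs}((\wt J_e^\circ)^2)\leq CN^{-2}\cdot N^2\cdot N^{-1}$, which gives~\eqref{eq:quadratic-tilted-shift}. For the mean in (ii), the centering of $\wt X_N$ is under the product law, so
\[
\E^{\mathrm{tilt},\mvgs}(\wt X_N)=N^{-1}\sum_e\big(\E^{\mathrm{tilt},\mvgs}(\wt J_e^2)-\E(\wt J^2)\big).
\]
By~\eqref{eq:general-planted-edge-moments} and~\eqref{eq:truncated-second-moment} this equals
\[
\frac{(\E(J^4)-1)\beta^2}{2N^2}\sum_eA_e(\mvgs)^2+o(1).
\]
Using $A_e^2=(\gk-2)A_e+\gk-1$ together with~\eqref{eq:vertex-edge}, we have $\sum_eA_e^2=(\gk-1)N^2/2+\cO(N(1+\norm{m}_2^2))$. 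Since the magnetization is bounded, this gives the limit $(\gk-1)\beta^2(\E(J^4)-1)/4$.

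\textbf{Loop statistic.} Expand $\prod_{e\in\gamma}(\wt J_e^\circ+\mu_e)=\sum_{S\subseteq\gamma}\prod_{e\in S}\wt J_e^\circ\prod_{e\in\gamma\setminus S}\mu_e$. The term $S=\gamma$ gives $\wt Y^\circ_{N,n}$, and the term $S=\emptyset$ gives the mean. The remainder is
\[
N^{-n/2}\sum_{\emptyset\ne S\subsetneq\gamma}\ \sum_{\gamma}\prod_{S}\wt J^\circ\prod_{\gamma\setminus S}\mu,
\]
and we bound it for each proper nonempty $S$. We regroup by the edge set $S$, which is a union of $k$ vertex-disjoint paths (a linear forest) with $|S|=s$ edges. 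The coefficient of the centered monomial $\prod_{e\in S}\wt J_e^\circ$ is a sum over completions of $S$ to an $n$-loop. Since $|\mu_e|\leq CN^{-1/2}$, this coefficient is at most $N^{-n/2}\cdot N^{n-s-k}\cdot N^{-(n-s)/2}$: the completion has $n-s$ edges and requires choosing $n-s-k$ new vertices. Distinct $S$ give orthogonal monomials, so the second moment is at most
\[
\#\{S\}\cdot(\text{coefficient})^2\cdot\prod_{e\in S}\E\big((\wt J_e^\circ)^2\big)\leq C N^{s+k}\cdot N^{-n+2(n-s-k)-(n-s)}=CN^{-k}\leq CN^{-1},
\]
using $k\geq1$ and the count $N^{s+k}$ of linear forests with $s$ edges and $k$ components. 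Summing over the finitely many shapes gives~\eqref{eq:tilted-loop-remainder}.

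\textbf{Loop mean.} We have $\E^{\mathrm{tilt},\mvgs}(\wt Y_{N,n})=N^{-n/2}\sum_\gamma\prod_{e\in\gamma}\mu_e$. Replacing each $\mu_e$ by $\beta A_e/\sqrt N$ costs $\cO(N^{-1})$ relative to the whole sum, so
\[
\E^{\mathrm{tilt},\mvgs}(\wt Y_{N,n})=\beta^nN^{-n}\sum_\gamma\prod_{e\in\gamma}A_e+o(1).
\]
By~\eqref{eq:phi-prod}, $\sum_{\gamma}\prod_{e\in\gamma}A_e=\frac1{2n}\sum_{\text{distinct }i_1,\dots,i_n}\tr\big(\prod_r\phi(\sigma_{i_r})\phi(\sigma_{i_r})^{\intercal}\big)$. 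Removing the distinctness constraint changes the sum by only $\cO(N^{n-1})$, since the entries are bounded. The full sum equals $\tr(M^n)$ with $M=\sum_i\phi(\sigma_i)\phi(\sigma_i)^{\intercal}=N\sum_xp_x\phi(x)\phi(x)^{\intercal}=N(\Pi+\cO(N^{-1/2}))$. Hence $N^{-n}\tr(M^n)\to\tr(\Pi^n)=\gk-1$, and the mean converges to $(\gk-1)\beta^n/(2n)$.

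\textbf{Main obstacle.} The hardest step is the combinatorial bound on the cross terms in~\eqref{eq:tilted-loop-remainder}. It must hold uniformly in $\mvgs$ and account correctly for the $N^{-1/2}$ size of $\mu_e$ against the number of ways to complete a loop. I will handle it by grouping terms by the linear forest $S$ and using orthogonality of centered monomials, as above.
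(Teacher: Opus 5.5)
Your proposal is correct and follows essentially the same route as the paper. Both use the decomposition $\wt J_e=\wt J_e^\circ+\mu_e$, regroup the loop remainder by the linear forest of centered edges (orthogonality, $\cO(N^{s+k})$ forests with $s$ edges and $k$ paths, $\cO(N^{n-s-k})$ completions, giving $\cO(N^{-k})$), and obtain the loop means from $N^{-n}\tr(M^n)\to\tr(\Pi^n)=\gk-1$; the only differences are cosmetic, e.g.\ you keep the factor $N^{-n/2}$ outside instead of rescaling, and use $A_e^2=(\gk-2)A_e+\gk-1$ with \eqref{eq:vertex-edge} in place of \eqref{eq:selfoverlap}.
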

\begin{proof}
	(i) For the quadratic statistic, $\wt J_e^2=(\wt J_e^\circ)^2+2\mu_e\wt J_e^\circ+\mu_e^2$ with $\mu_e=\E^{\mathrm{tilt},\mvgs}(\wt J_e)$. Hence
	\begin{align*}
		\wt X_N-\E^{\mathrm{tilt},\mvgs}(\wt X_N)-\wt X_N^\circ=\frac2N\sum_e\mu_e\,\wt J_e^\circ.
	\end{align*}
	Independence and Lemma~\ref{lem:tilted-edge-moments} bound its variance by $CN^{-1}$, which proves~\eqref{eq:quadratic-tilted-shift}.

	For the loop statistic, write $\tau_e:=\wt J_e^\circ/\sqrt N$ and $a_e:=\mu_e/\sqrt N$. Then $\E^{\mathrm{tilt},\mvgs}(\tau_e)=0$, $\E^{\mathrm{tilt},\mvgs}(\tau_e^2)\leq C/N$, and $\abs{a_e}\leq C/N$, uniformly in $e$ and $\mvgs$. Expanding the loop product and subtracting its expectation gives
	\begin{align*}
		\prod_{e\in\gamma}(\tau_e+a_e)-\prod_{e\in\gamma}a_e
		=\sum_{\emptyset\ne A\subseteq\gamma}\left(\prod_{e\in\gamma\setminus A}a_e\right)\prod_{e\in A}\tau_e.
	\end{align*}
	Hence
	\begin{align*}
		\wt Y_{N,n}-\E^{\mathrm{tilt},\mvgs}(\wt Y_{N,n})-\wt Y_{N,n}^\circ
		=\sum_{k=1}^{n-1}\sum_{\abs A=k}
		\left(\sum_{\substack{\gamma\in\cL_n\\A\subseteq\gamma}}\prod_{e\in\gamma\setminus A}a_e\right)\prod_{e\in A}\tau_e,
	\end{align*}
	where the sum over $A$ runs over the $k$-edge subgraphs contained in some loop of length $n$. Define $N_n(A):=\#\{\gamma\in\cL_n:A\subseteq\gamma\}$. Products over distinct edge sets are orthogonal, since an edge lying in exactly one of $A,A'$ contributes a factor $\E^{\mathrm{tilt},\mvgs}(\tau_e)=0$. Also $\E^{\mathrm{tilt},\mvgs}\big((\prod_{e\in A}\tau_e)^2\big)=\prod_{e\in A}\E^{\mathrm{tilt},\mvgs}(\tau_e^2)$. Thus
	\begin{align*}
		\E^{\mathrm{tilt},\mvgs}\Big(\big(\wt Y_{N,n}-\E^{\mathrm{tilt},\mvgs}(\wt Y_{N,n})-\wt Y_{N,n}^\circ\big)^2\Big)
		\leq C(n)\sum_{k=1}^{n-1}N^{-2(n-k)}N^{-k}\sum_{\abs A=k}N_n(A)^2.
	\end{align*}
	Let $A$ be a $k$-edge proper subgraph of a loop of length $n$, consisting of $p\geq1$ vertex-disjoint paths. Then $\abs{V(A)}=k+p$. For fixed $k$ and $p$ there are at most $C(n)N^{k+p}$ such $A$, and $N_n(A)\leq C(n)N^{n-k-p}$ (choose the remaining $n-k-p$ vertices of the loop and a cyclic arrangement). Hence
	\begin{align*}
		\sum_{\abs A=k}N_n(A)^2\leq C(n)\sum_{p=1}^{n-k}N^{k+p}N^{2(n-k-p)}
		\leq C(n)N^{2n-k-1}.
	\end{align*}
	Each term in the preceding second-moment bound is therefore $\cO(N^{-1})$, which proves~\eqref{eq:tilted-loop-remainder}.

	(ii) Summing~\eqref{eq:general-planted-edge-moments} and using~\eqref{eq:truncated-second-moment} gives
	\begin{align*}
		\E^{\mathrm{tilt},\mvgs}\big(\wt X_N\big)=\frac{(\E(J^4)-1)\beta^2}{2N^2}\sum_e A_e(\mvgs)^2+o(1)
		\to\frac{(\gk-1)\beta^2}{4}\big(\E(J^4)-1\big),
	\end{align*}
	since~\eqref{eq:selfoverlap} gives $\sum_e A_e(\mvgs)^2=(\gk-1)N^2/2+\cO(N)$ on a bounded magnetization window.

	For the loop means, independence and~\eqref{eq:tilted-edge-mean} yield
	\begin{align*}
		\E^{\mathrm{tilt},\mvgs}\big(\wt Y_{N,n}\big)
		=\frac{\beta^n}{N^n}\sum_{\gamma\in\cL_n}\prod_{e\in\gamma}A_e(\mvgs)+o(1).
	\end{align*}
	Since $A_{ij}(\mvgs)=\phi(\sigma_i)^{\intercal}\phi(\sigma_j)$ and each loop is counted $2n$ times by its ordered vertex sequences, we obtain, with $i_{n+1}:=i_1$,
	\begin{align*}
		\sum_{\gamma\in\cL_n}\prod_{e\in\gamma}A_e(\mvgs)
		=\frac1{2n}\sum_{\substack{i_1,\ldots,i_n\\\text{all distinct}}}
		\prod_{r=1}^n\phi(\sigma_{i_r})^{\intercal}\phi(\sigma_{i_{r+1}})
		=\frac1{2n}\tr\left[\left(\sum_{i=1}^N\phi(\sigma_i)\phi(\sigma_i)^{\intercal}\right)^n\right]+\cO(N^{n-1}).
	\end{align*}
	For the second equality, expand the matrix power and use
	\begin{align*}
		\tr\big[(v_1v_1^{\intercal})\cdots(v_nv_n^{\intercal})\big]
		=(v_1^{\intercal}v_2)\cdots(v_n^{\intercal}v_1).
	\end{align*}
	The terms with repeated indices number $\cO(N^{n-1})$ and are uniformly bounded, so after division by $N^n$ they contribute $\cO(N^{-1})$. Moreover, $n_x(\mvgs)/N\to1/\gk$, so
	\begin{align*}
		\frac1N\sum_{i=1}^N\phi(\sigma_i)\phi(\sigma_i)^{\intercal}
		=\sum_{x=1}^{\gk}\frac{n_x(\mvgs)}N\phi(x)\phi(x)^{\intercal}
		\to\frac1\gk\sum_{x=1}^{\gk}\phi(x)\phi(x)^{\intercal}=\Pi.
	\end{align*}
	Since $\Pi^n=\Pi$ and $\tr(\Pi)=\gk-1$, it follows that
	$\E^{\mathrm{tilt},\mvgs}(\wt Y_{N,n})\to(\gk-1)\beta^n/(2n)$.
\end{proof}

For fixed $\ell\geq3$, define
\begin{align}
	\wt G_{N,\ell}&:=\frac{(\gk-1)\beta^2}{2} \wt X_N
					+(\gk-1)\sum_{n=3}^{\ell}\beta^n \wt Y_{N,n},
	\label{eq:general-loop-statistic}\\
	\varphi_{\gk,J,\ell}(\beta^2)&:=\frac{(\gk-1)^2\beta^4}{8}\big(\E(J^4)-1\big)
				+(\gk-1)^2\sum_{n=3}^{\ell}\frac{\beta^{2n}}{2n}.
	\label{eq:general-cycle-variance}
\end{align}
For $\beta^2<1$ we have $\varphi_{\gk,J,\ell}(\beta^2)\uparrow\varphi_{\gk,J}(\beta^2)$ as $\ell\to\infty$. The weights in~\eqref{eq:general-loop-statistic} are chosen so that the limiting mean shift of $\wt G_{N,\ell}$ under the tilted law equals its limiting variance under the product law. This is the signature of an exponential (Le Cam-type) change of measure.

\begin{prop}[Loop fluctuations under the two disorder laws]\label{prop:general-planted-cycles}
	Fix $\beta>0$ and $\ell\geq3$. Under the product law of the truncated disorder,
	\begin{align*}
		\big( \wt X_N, \wt Y_{N,3},\ldots, \wt Y_{N,\ell}\big)
		\Longrightarrow
		\N\left(0,\frac{\E(J^4)-1}{2}\right)
			\otimes\bigotimes_{n=3}^{\ell}\N\left(0,\frac1{2n}\right).
	\end{align*}
	Let $(\mvn_N)$ be profiles with $\sup_N\norm{m_{\mvn_N}}_2<\infty$ and let $\mvgs_N\in\Sigma_N^{(\mvn_N)}$. Under $\pr_N^{\mathrm{tilt},\mvgs_N}$, the same vector converges to a Gaussian vector with independent coordinates, the same variances, and respective means
	\begin{align*}
		\frac{(\gk-1)\beta^2}{4}\big(\E(J^4)-1\big)
		\quad\text{and}\quad \frac{(\gk-1)\beta^n}{2n},\quad 3\leq n\leq\ell.
	\end{align*}
	Consequently, under the product law and under the tilted law, respectively,
	\begin{align*}
		\wt G_{N,\ell}\Longrightarrow\N\big(0,\varphi_{\gk,J,\ell}(\beta^2)\big)
		\quad\text{and}\quad
		\wt G_{N,\ell}\Longrightarrow
			\N\big(\varphi_{\gk,J,\ell}(\beta^2),\varphi_{\gk,J,\ell}(\beta^2)\big).
	\end{align*}
\end{prop}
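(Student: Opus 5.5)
The plan is to reduce both assertions to Proposition~\ref{prop:array-joint-clt}, using Lemmas~\ref{lem:tilted-edge-moments} and~\ref{lem:tilted-approx} for the change of law.

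\textbf{Product law.} Under $\pr$ the truncated edges $\wt J_e$ are i.i.d., symmetric and centered. By~\eqref{eq:truncated-second-moment}, $\E(\wt J_e^2)=1-o(N^{-1})$, and the fourth powers are uniformly integrable by~\eqref{eq:truncated-fourth-tail}. For the variance condition in~\eqref{eq:array-moment-limits},
\[
N^{-2}\binom N2\var(\wt J^2)\to\tfrac12\big(\E(J^4)-1\big)
\]
by~\eqref{eq:truncated-second-moment}--\eqref{eq:truncated-fourth-moment}. Proposition~\ref{prop:array-joint-clt} then gives the first display directly, with $v=(\E(J^4)-1)/2$.

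\textbf{Tilted law.} Fix $\mvgs_N$. Under $\pr_N^{\mathrm{tilt},\mvgs_N}$ the centered edges $\wt J_e^\circ$ are independent and centered. Their second moments converge uniformly to $1$ and their fourth powers are uniformly integrable, both by Lemma~\ref{lem:tilted-edge-moments}. For the variance condition I will write
\[
\var\big((\wt J_e^\circ)^2\big)=\E^{\mathrm{tilt}}\big((\wt J_e^\circ)^4\big)-\E^{\mathrm{tilt}}\big((\wt J_e^\circ)^2\big)^2 .
\]
By the uniform convergence in Lemma~\ref{lem:tilted-edge-moments}, this equals $\E(J^4)-1+o(1)$ uniformly in $e$. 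Summing over the $\binom N2$ edges and dividing by $N^2$ gives the limit $v=(\E(J^4)-1)/2$. Proposition~\ref{prop:array-joint-clt} therefore applies to $(\wt X_N^\circ,\wt Y_{N,3}^\circ,\dots,\wt Y_{N,\ell}^\circ)$ under the tilted law, with the same centered Gaussian limit.

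Next, Lemma~\ref{lem:tilted-approx}(i) shows that $\wt X_N-\E^{\mathrm{tilt}}(\wt X_N)$ and $\wt Y_{N,n}-\E^{\mathrm{tilt}}(\wt Y_{N,n})$ differ from their circled versions by $\cO(N^{-1/2})$ in $L^2$. Lemma~\ref{lem:tilted-approx}(ii) identifies the deterministic means. Slutsky's lemma then yields the shifted Gaussian limit.

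\textbf{Consequence for $\wt G_{N,\ell}$.} The statistic $\wt G_{N,\ell}$ is a fixed linear combination of the coordinates, so the continuous mapping theorem applies under both laws. Under the product law, the variance is
\[
\frac{(\gk-1)^2\beta^4}{4}\cdot\frac{\E(J^4)-1}{2}+(\gk-1)^2\sum_{n=3}^{\ell}\frac{\beta^{2n}}{2n}=\varphi_{\gk,J,\ell}(\beta^2).
\]
Under the tilted law, the mean is
\[
\frac{(\gk-1)\beta^2}{2}\cdot\frac{(\gk-1)\beta^2(\E(J^4)-1)}{4}+(\gk-1)\sum_{n=3}^{\ell}\beta^n\frac{(\gk-1)\beta^n}{2n}=\varphi_{\gk,J,\ell}(\beta^2),
\]
as claimed.

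\textbf{Main obstacle.} The only point needing care is the uniformity in the variance condition under the tilted law. The tilted fourth and second moments of $\wt J_e^\circ$ must converge uniformly over edges for the averaged variance to converge. Lemma~\ref{lem:tilted-edge-moments} supplies exactly this, so I expect no real difficulty. A secondary point is that Proposition~\ref{prop:array-joint-clt} allows the law to depend on $N$, since the tilt depends on $\mvgs_N$. This is fine because it is stated for triangular arrays.
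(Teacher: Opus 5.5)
Your proposal is correct and follows the paper's proof essentially step for step. You verify the hypotheses of Proposition~\ref{prop:array-joint-clt} for $\wt J_e$ under the product law, and for the centered edges $\wt J_e^\circ$ under the tilted law via Lemma~\ref{lem:tilted-edge-moments}; you then pass to the uncentered vector with the $L^2$ approximations and mean shifts of Lemma~\ref{lem:tilted-approx}, and your mean and variance bookkeeping for $\wt G_{N,\ell}$ matches the paper (which phrases your Slutsky step through bounded Lipschitz test functions).
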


\begin{proof}
	Under the product law, the edges $\wt J_e$ are independent and centered by symmetry. By~\eqref{eq:truncated-second-moment} and~\eqref{eq:truncated-fourth-moment},
	\begin{align*}
		\max_{e\in E_N}\abs{\E\big(\wt J_e^2\big)-1}\to0
		\quad\text{and}\quad \frac1{N^2}\sum_{e\in E_N}\var\big(\wt J_e^2\big)
		=\frac{\binom N2}{N^2}\Big(\E\big(\wt J^4\big)-\E\big(\wt J^2\big)^2\Big)
		\to\frac{\E(J^4)-1}{2}.
	\end{align*}
	Moreover,~\eqref{eq:truncated-fourth-tail} gives the uniform integrability of their fourth powers. Thus all assumptions of Proposition~\ref{prop:array-joint-clt} hold with $\cJ_{N,e}=\wt J_e$ and $v=(\E(J^4)-1)/2$, which proves the first assertion.

	For the tilted law, Lemma~\ref{lem:tilted-edge-moments} verifies the assumptions of Proposition~\ref{prop:array-joint-clt} for $\cJ_{N,e}=\wt J_e^\circ$, with $v=(\E(J^4)-1)/2$. Thus $(\wt X_N^\circ,\wt Y_{N,3}^\circ,\ldots,\wt Y_{N,\ell}^\circ)$ has the centered Gaussian limit of the first assertion. By Lemma~\ref{lem:tilted-approx}, the expectation of any bounded Lipschitz function of the original vector differs by $o(1)$ from that of the centered vector translated by the limiting means. This proves the tilted assertion. The claims for $\wt G_{N,\ell}$ follow from its definition, since
	\begin{align*}
		\frac{(\gk-1)\beta^2}{2}\cdot\frac{(\gk-1)\beta^2}{4}\big(\E(J^4)-1\big)
		+(\gk-1)\sum_{n=3}^{\ell}\beta^n\frac{(\gk-1)\beta^n}{2n}
		=\varphi_{\gk,J,\ell}(\beta^2)
	\end{align*}
	for the mean, and the variance is computed in the same way.
\end{proof}

\subsection{Proof of the balanced free-energy CLT}\label{sec:general-constrained}

\begin{proof}[Proof of Theorem~\ref{thm:balanced_clt}]
	We show that $\cZ_N^{(\mvn_N)}$ is close in $L^2$ to an explicit function of the loop statistics, then take logarithms. Fix a sequence $(\mvn_N)$ in a bounded magnetization window. Lemma~\ref{lem:general-profile-moments} gives $\E\big( (\cZ_N^{(\mvn_N)})^2\big)\to e^{\varphi_{\gk,J}(\beta^2)}$.
	We show that the loop statistics account for this entire second moment. Fix $\ell\geq3$ and $B>0$, and define the bounded continuous function
	\begin{align*}
		f_{\ell,B}(x):=\min\big\{e^{x-\varphi_{\gk,J,\ell}(\beta^2)/2},B\big\}.
	\end{align*}
	Our goal is the $L^2$ approximation
	\begin{align}
		\lim_{\ell\to\infty}\lim_{B\to\infty}\limsup_{N\to\infty}
		\E\Big(\big(\cZ_N^{(\mvn_N)}-f_{\ell,B}(\wt G_{N,\ell})\big)^2\Big)=0.
		\label{eq:general-lognormal-approximation}
	\end{align}
	By~\eqref{eq:general-planted-law},
	\begin{align*}
		\E\Big(\cZ_N^{(\mvn_N)}f_{\ell,B}(\wt G_{N,\ell})\Big)
		=\E_{\mvgs}^{(\mvn_N)}\Big(\E^{\mathrm{tilt},\mvgs}\big(f_{\ell,B}(\wt G_{N,\ell})\big)\Big).
	\end{align*}
	The inner expectation is the same for all configurations of this profile, because the product law and $\wt G_{N,\ell}$ are invariant under permutations of the vertices. Thus Proposition~\ref{prop:general-planted-cycles} gives
	\begin{align*}
		\lim_{N\to\infty}\E\Big(\big( \cZ_N^{(\mvn_N)}-f_{\ell,B}( \wt G_{N,\ell})\big)^2\Big)
		=e^{\varphi_{\gk,J}(\beta^2)}-2\E\big(f_{\ell,B}(\zeta'_{\ell})\big)+\E\big(f_{\ell,B}(\zeta_{\ell})^2\big),
	\end{align*}
	where $\zeta_{\ell}\sim\N(0,\varphi_{\gk,J,\ell}(\beta^2))$ and $\zeta'_{\ell}\sim\N(\varphi_{\gk,J,\ell}(\beta^2),\varphi_{\gk,J,\ell}(\beta^2))$. The density of the law of $\zeta'_\ell$ with respect to that of $\zeta_\ell$ is $e^{x-\varphi_{\gk,J,\ell}(\beta^2)/2}$. Letting $B\to\infty$, monotone convergence turns the right-hand side into
	\begin{align*}
		e^{\varphi_{\gk,J}(\beta^2)}-2e^{\varphi_{\gk,J,\ell}(\beta^2)}
			+e^{\varphi_{\gk,J,\ell}(\beta^2)}
		=e^{\varphi_{\gk,J}(\beta^2)}-e^{\varphi_{\gk,J,\ell}(\beta^2)},
	\end{align*}
	which tends to zero as $\ell\to\infty$. This proves~\eqref{eq:general-lognormal-approximation}. For a bounded Lipschitz function $h$, the Cauchy--Schwarz inequality gives
	\begin{align*}
		\left|\E\big(h( \cZ_N^{(\mvn_N)})\big)-\E\big(h(f_{\ell,B}( \wt G_{N,\ell}))\big)\right|
		\leq\operatorname{Lip}(h)
			\left(\E\Big(\big( \cZ_N^{(\mvn_N)}
				-f_{\ell,B}( \wt G_{N,\ell})\big)^2\Big)\right)^{1/2}.
	\end{align*}
	Let first $N\to\infty$, then $B\to\infty$, and finally $\ell\to\infty$. The Gaussian approximants converge in law to $e^{\zeta-\varphi_{\gk,J}(\beta^2)/2}$ with $\zeta\sim\N(0,\varphi_{\gk,J}(\beta^2))$. This proves the lognormal limit of $\cZ_N^{(\mvn_N)}$. Since the limit is strictly positive, the continuous mapping theorem gives
	\begin{align}
		\ln \cZ_N^{(\mvn_N)}
			\Longrightarrow\N\left(-\frac12\varphi_{\gk,J}(\beta^2),
										\varphi_{\gk,J}(\beta^2)\right).
		\label{eq:general-profile-lognormal}
	\end{align}
	No estimate on negative moments of $\cZ_N^{(\mvn_N)}$ is needed.

	Finally, take logarithms in~\eqref{eq:profile-factorization} and substitute~\eqref{eq:general-self-kernel} and~\eqref{eq:general-variance}. Since $\fR_N^{(1)}\to0$ on the window, the limiting mean correction is
	\begin{align*}
		\frac{\mu_4\beta^4(\gk-1)(\gk^2-3\gk+3)}{48}-\frac12\varphi_{\gk,J}(\beta^2)
		=-\frac12\varphi_{\gk}(\beta^2)
			+\frac{\mu_4\beta^4(\gk-1)(\gk^2-6\gk+6)}{48}.
	\end{align*}
	The coupling~\eqref{eq:truncation-coupling} transfers the distributional statement to the original disorder.
\end{proof}

%%%%%%%%%%%%%%%%%%%%%%%%%%%%%%%%%%%%%%%%%%%%%%%%%%%%%%%%%%%%%%%%%%%%%%%%%%%%%%%
\section{Profile comparison and unconstrained free energy}\label{sec:general-color}
%%%%%%%%%%%%%%%%%%%%%%%%%%%%%%%%%%%%%%%%%%%%%%%%%%%%%%%%%%%%%%%%%%%%%%%%%%%%%%%
We now remove the profile constraint. By~\eqref{eq:profile-factorization}, the unconstrained partition function is a sum over profiles of deterministic weights times the random factors $\cZ_N^{(\mvn)}$. On bounded windows these factors are all close to the reference factor $\cZ_N^{(\mvn^*)}$ by Lemma~\ref{lem:general-profile-moments}. Outside such windows the deterministic Curie--Weiss--Potts weights are small when $\alpha<\alpha_c(\gk)$. Theorem~\ref{thm:balanced_clt} provides the lower-tail control that makes the comparison relative, namely $\cZ_N^{(\mvn^*)}$ is bounded away from zero in probability. All disorder-dependent quantities are built from $\wt{J}$ until the final transfer. 

\subsection{Profile equivalence and free-energy comparison}
Let $P_{\mvn}:=\abs{\Sigma_N^{(\mvn)}}/\gk^N$ be the multinomial probability of the profile $\mvn$ under independent uniform colors, and define
\begin{align}
	\widehat P_{\mvn}:=\frac{P_{\mvn}e^{c_N(\mvn)}}
		{\sum_{\mvn'}P_{\mvn'}e^{c_N(\mvn')}}
	\quad\text{and}\quad
	w_{\mvn}:=e^{c_N(\mvn)-c_N(\mvn^*)}.
	\label{eq:tilted-profiles}
\end{align}

Here $c_N(\mvn)$ is the common value of $c_N$ on $\Sigma_N^{(\mvn)}$, as in~\eqref{eq:profile-factorization}. These weights are deterministic. The law $\widehat P$ is the annealed profile law, and $w_{\mvn}$ is the annealed Boltzmann factor $e^{c_N}$ of the profile $\mvn$ relative to that of $\mvn^*$. By~\eqref{eq:profile-factorization}, all of the disorder sits in the factors $\cZ_N^{(\mvn)}$.

For $0\leq\gamma<\alpha_c(\gk)$, Lemmas~\ref{lem:expmoment} and~\ref{lem:eta-phi} in Appendix~\ref{sec:constants}, applied to $W=\sqrt\gamma\,\phi(\sigma_1)$ with $\sigma_1$ uniform on $[\gk]$, give
\begin{align*}
	\sup_N\sum_{\mvn}P_{\mvn}e^{\gamma\norm{m_{\mvn}}_2^2/2}
	=\sup_N\E_{\mvgs}\big(e^{\gamma\norm{m(\mvgs)}_2^2/2}\big)
	\leq\big(1-\gamma\eta_\phi(\gk)\big)^{-\gk/2}<\infty.
\end{align*}
This bound controls the contribution of profiles with large magnetization in the comparison below.

\begin{prop}[Profile equivalence and free-energy comparison]\label{prop:profile-equiv}
	Assume that $0<\beta^2<\alpha_c(\gk)$ and $\alpha<\alpha_c(\gk)$. For the original disorder $J$,
	\begin{align}
		\sum_{\mvn}\left|\big\langle\ind\{n(\mvgs)=\mvn\}\big\rangle_{\varrho_{N,\beta}}
						-\widehat P_{\mvn}\right|&\longrightarrow0
						\quad\text{in probability, and}
		\label{eq:profile-total-variation}\\
		\ln Z_N(\beta;J)-\ln Z_N^{(\mvn^*)}(\beta;J)
			-\ln\frac{\gk^N}{\abs{\Sigma_N^{(\mvn^*)}}}
		&\longrightarrow-\frac{\gk-1}{2}\ln(1-\alpha)
						\quad\text{in probability}.
		\label{eq:transfer-full}
	\end{align}
\end{prop}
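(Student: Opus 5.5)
By the truncation coupling~\eqref{eq:truncation-coupling}, I work with $\wt J$ throughout and transfer at the end. By~\eqref{eq:profile-factorization}, the Gibbs weight of a profile is
\[
\big\langle\ind\{n(\mvgs)=\mvn\}\big\rangle
=\frac{P_{\mvn}e^{c_N(\mvn)}\cZ_N^{(\mvn)}}{\sum_{\mvn'}P_{\mvn'}e^{c_N(\mvn')}\cZ_N^{(\mvn')}}
=\frac{\widehat P_{\mvn}\cZ_N^{(\mvn)}}{\sum_{\mvn'}\widehat P_{\mvn'}\cZ_N^{(\mvn')}}.
\]
Both claims therefore reduce to the single estimate
\begin{align*}
\sum_{\mvn}\widehat P_{\mvn}\,\big|\cZ_N^{(\mvn)}-\cZ_N^{(\mvn^*)}\big|\longrightarrow0\quad\text{in }L^1,
\end{align*}
combined with the fact that $\cZ_N^{(\mvn^*)}$ is bounded away from zero in probability. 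The latter follows from the lognormal limit~\eqref{eq:general-profile-lognormal} in the proof of Theorem~\ref{thm:balanced_clt}. Granting the estimate, the total variation in~\eqref{eq:profile-total-variation} is at most $2\sum_{\mvn}\widehat P_{\mvn}|\cZ_N^{(\mvn)}-\cZ_N^{(\mvn^*)}|/\sum_{\mvn}\widehat P_{\mvn}\cZ_N^{(\mvn)}$, and the denominator is $\cZ_N^{(\mvn^*)}+o_{\pr}(1)$, which is tight away from zero. For~\eqref{eq:transfer-full} I would write
\[
\frac{Z_N}{Z_N^{(\mvn^*)}}\cdot\frac{\abs{\Sigma_N^{(\mvn^*)}}}{\gk^N}=\sum_{\mvn}\frac{P_{\mvn}}{P_{\mvn^*}}\,w_{\mvn}\,\frac{\cZ_N^{(\mvn)}}{\cZ_N^{(\mvn^*)}}.
\]
The problem then becomes a deterministic limit: $\sum_{\mvn}P_{\mvn}w_{\mvn}/P_{\mvn^*}\to(1-\alpha)^{-(\gk-1)/2}$.

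To prove the $L^1$ estimate, I split the profiles at $\norm{m_{\mvn}}_2\le L$. Inside the window I use Cauchy--Schwarz and~\eqref{eq:general-profile-collapse}, which give a bound of $\sup_{\norm{m_{\mvn}}\le L}\E(|\cZ_N^{(\mvn)}-\cZ_N^{(\mvn^*)}|)\to0$. Outside the window I use $\E\cZ_N^{(\mvn)}=1$, which gives a bound of $2\widehat P(\norm{m_{\mvn}}_2>L)$. It thus remains to show that $\widehat P$ is tight in the magnetization, uniformly in $N$.

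That tightness, together with the deterministic limit above, is the main obstacle, because~\eqref{eq:general-self-kernel} only controls $c_N$ up to the error $C\norm{m}^2/N$. Outside the window I would instead bound $c_N(\mvn)$ directly. Since $|\beta A_e\wt J/\sqrt N|\le C\eps_N$, I expect the one-edge bound $\ln\E e^{z\wt J}\le z^2\E(\wt J^2)/2+Cz^4\E\wt J^4$ to hold uniformly. This would give $c_N(\mvn)-c_N(\mvn^*)\le\frac{\alpha}{2}\norm{m_{\mvn}}^2+\frac CN\norm{m_{\mvn}}^2+C$.

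Choose $\gamma$ with $\alpha<\gamma<\alpha_c(\gk)$. For large $N$, $\alpha+C/N\le\gamma$, and the displayed bound $\sup_N\E_{\mvgs}e^{\gamma\norm m^2/2}<\infty$ applies. I also need a lower bound on the normalizer $\sum_{\mvn'}P_{\mvn'}e^{c_N(\mvn')-c_N(\mvn^*)}$, which is bounded below by its window part. This yields $\widehat P(\norm m>L)\le Ce^{-(\gamma-\alpha)L^2/2}$ uniformly in $N$.

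For the limit, the window part converges by the local CLT for $m$ under $P$, with Gaussian $\N(0,\Pi)$ on $\cV$. Using $P_{\mvn}/P_{\mvn^*}\approx e^{-\norm{m_{\mvn}}^2/2}$ on the lattice, which has spacing that cancels $P_{\mvn^*}\asymp N^{-(\gk-1)/2}$, the window sum converges to
\[
\int_{\cV}e^{-(1-\alpha)\norm x^2/2}\,dx\,\Big/\int_{\cV}e^{-\norm x^2/2}\,dx=(1-\alpha)^{-(\gk-1)/2}.
\]
The tail part is handled by the same $\gamma$-bound, after dividing by $P_{\mvn^*}$ and comparing $\sum_{\mvn}P_{\mvn}e^{\gamma\norm{m}^2/2}$ over $\norm m>L$ with the window. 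Finally, I transfer to $J$ using~\eqref{eq:truncation-coupling}.
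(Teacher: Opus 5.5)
Your treatment of~\eqref{eq:profile-total-variation} is the paper's argument, up to normalizing by the deterministic constant $\sum_{\mvn}P_{\mvn}w_{\mvn}$. You use the same bound of the total variation by a weighted $L^1$ difference, the same window split via~\eqref{eq:general-profile-collapse} and $\E(\cZ_N^{(\mvn)})=1$, the same sub-Gaussian tail with $\alpha<\gamma<\alpha_c(\gk)$, and the lognormal limit~\eqref{eq:general-profile-lognormal} for the lower tail of $\cZ_N^{(\mvn^*)}$. The $C\norm{m}_2^2/N$ error in~\eqref{eq:general-self-kernel} is not actually an obstacle. It is uniform over configurations, so it already gives $w_{\mvn}\le Ce^{\gamma\norm{m_{\mvn}}_2^2/2}$ for large $N$, which is what the paper uses. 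Your one-edge bound reproduces this.

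The free-energy step is wrong as written. Since $\abs{\Sigma_N^{(\mvn^*)}}/\gk^N=P_{\mvn^*}$, the correct identity is
\[
\frac{Z_N}{Z_N^{(\mvn^*)}}\cdot\frac{\abs{\Sigma_N^{(\mvn^*)}}}{\gk^N}=\sum_{\mvn}P_{\mvn}w_{\mvn}\frac{\cZ_N^{(\mvn)}}{\cZ_N^{(\mvn^*)}},
\]
whereas your right-hand side equals $Z_N/Z_N^{(\mvn^*)}$ alone. So your deterministic target $\sum_{\mvn}P_{\mvn}w_{\mvn}/P_{\mvn^*}$ does not converge. By your own local-CLT heuristic, it is a sum of $e^{-(1-\alpha)\norm{m_{\mvn}}_2^2/2}$ over a lattice in the $(\gk-1)$-dimensional space $\cV$ with spacing of order $N^{-1/2}$. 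Hence it grows like $N^{(\gk-1)/2}$, and once you have divided by $P_{\mvn^*}$ nothing is left to cancel the lattice spacing. The tail estimate ``after dividing by $P_{\mvn^*}$'' has the same defect.

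The fix is to drop the factor $1/P_{\mvn^*}$, and then no local CLT is needed.
\begin{enumeratei}
\item On each window, $w_{\mvn}-e^{\alpha\norm{m_{\mvn}}_2^2/2}\to0$ uniformly by~\eqref{eq:general-self-kernel}, and your $\gamma$-bound controls the tails.
\item The ordinary CLT for $m$, together with the uniform integrability from $\sup_N\E_{\mvgs}(e^{\gamma\norm{m}_2^2/2})<\infty$, gives $\sum_{\mvn}P_{\mvn}e^{\alpha\norm{m_{\mvn}}_2^2/2}=\E_{\mvgs}(e^{\alpha\norm{m}_2^2/2})\to\E(e^{\alpha\norm{g_0}_2^2/2})=(1-\alpha)^{-(\gk-1)/2}$. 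This is~\eqref{eq:general-profile-tilt}.
\item The random factors are then removed with your $L^1$ estimate, multiplied by the bounded constant $\sum_{\mvn}P_{\mvn}w_{\mvn}$, together with $1/\cZ_N^{(\mvn^*)}=\cO_{\pr}(1)$.
\end{enumeratei}
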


\begin{proof}
	Throughout, $\sum_{\leq L}$ and $\sum_{>L}$ denote the profile sums restricted to $\norm{m_{\mvn}}_2\leq L$ and to $\norm{m_{\mvn}}_2>L$, for an arbitrary $L>0$.
	By~\eqref{eq:general-self-kernel} and $\norm{m_{\mvn^*}}_2^2=\cO(N^{-1})$,
	\begin{align*}
		\ln w_{\mvn}=\frac\alpha2\norm{m_{\mvn}}_2^2
		+o(1)+\cO\left(\frac{\norm{m_{\mvn}}_2^2}{N}\right),
	\end{align*}
	uniformly in the profile.
	Fix $\alpha<\gamma<\gamma'<\alpha_c(\gk)$. The expansion gives $w_{\mvn}\leq C e^{\gamma\norm{m_{\mvn}}_2^2/2}$ for all sufficiently large $N$, and $w_{\mvn}-e^{\alpha\norm{m_{\mvn}}_2^2/2}\to0$ uniformly on every fixed window.

	\emph{Profile equivalence.} Set
	\begin{align*}
		\mathrm{Den}_N:=\sum_{\mvn}P_{\mvn}w_{\mvn}\cZ_N^{(\mvn)}.
	\end{align*}
	The Gibbs profile probabilities are $P_{\mvn}w_{\mvn}\cZ_N^{(\mvn)}/\mathrm{Den}_N$, whereas $\widehat P_{\mvn}=P_{\mvn}w_{\mvn}/\sum_{\mvn'}P_{\mvn'}w_{\mvn'}$. We use the identity
	\begin{align*}
		\widehat P_{\mvn}
		-\frac{P_{\mvn}w_{\mvn}\cZ_N^{(\mvn^*)}}{\mathrm{Den}_N}
		=\frac{\widehat P_{\mvn}}{\mathrm{Den}_N}
		\sum_{\mvn'}P_{\mvn'}w_{\mvn'}
		\big(\cZ_N^{(\mvn')}-\cZ_N^{(\mvn^*)}\big).
	\end{align*}
	Adding and subtracting $P_{\mvn}w_{\mvn}\cZ_N^{(\mvn^*)}/\mathrm{Den}_N$ therefore gives
	\begin{align}
		\sum_{\mvn}\left|\frac{P_{\mvn}w_{\mvn}\cZ_N^{(\mvn)}}{\mathrm{Den}_N}
		-\widehat P_{\mvn}\right|
		\leq 2\,\frac{\sum_{\mvn}P_{\mvn}w_{\mvn}
		\abs{\cZ_N^{(\mvn)}-\cZ_N^{(\mvn^*)}}}{\mathrm{Den}_N}.
		\label{eq:profile-tv-bound}
	\end{align}

	We first estimate the numerator of~\eqref{eq:profile-tv-bound}. Since $\E\big(\cZ_N^{(\mvn)}\big)=1$,
	\begin{align}
		\E\Big(\sum_{\mvn}P_{\mvn}w_{\mvn}
		\abs{\cZ_N^{(\mvn)}-\cZ_N^{(\mvn^*)}}\Big)
		&\leq\sum_{\leq L}P_{\mvn}w_{\mvn}\,
		\E\big(\abs{\cZ_N^{(\mvn)}-\cZ_N^{(\mvn^*)}}\big)
		+2\sum_{>L}P_{\mvn}w_{\mvn}\nonumber\\
		&\leq C e^{\gamma L^2/2}\sum_{\leq L}P_{\mvn}\,
		\E\big(\abs{\cZ_N^{(\mvn)}-\cZ_N^{(\mvn^*)}}\big)
		+2\sum_{>L}P_{\mvn}w_{\mvn}.
		\label{eq:general-profile-split}
	\end{align}
	Since $\sum_{\leq L}P_{\mvn}\leq1$, the first term is at most
	\begin{align*}
		C e^{\gamma L^2/2}\sup_{\norm{m_{\mvn}}_2\leq L}
		\left(\E\Big(\big(\cZ_N^{(\mvn)}-\cZ_N^{(\mvn^*)}\big)^2\Big)\right)^{1/2},
	\end{align*}
	which tends to $0$ by~\eqref{eq:general-profile-collapse}. For the second term,
	\begin{align}
		\sum_{>L}P_{\mvn}w_{\mvn}
		&\leq C \sum_{>L}P_{\mvn} e^{\gamma\norm{m_{\mvn}}_2^2/2}\nonumber\\
		&\leq C e^{-(\gamma'-\gamma)L^2/2}\, \E_{\mvgs}\big(e^{\gamma'\norm{m(\mvgs)}_2^2/2}\big)
		\leq C\big(1-\gamma'\eta_\phi(\gk)\big)^{-\gk/2}
		e^{-(\gamma'-\gamma)L^2/2}.
		\label{eq:general-profile-tail}
	\end{align}
	In the last inequality we applied Lemma~\ref{lem:expmoment} to $W:=\sqrt{\gamma'}\phi(\sigma_1)$, whose optimal sub-Gaussian constant is $\gamma'\eta_\phi(\gk)<1$ by Lemma~\ref{lem:eta-phi}. Letting $N\to\infty$ in~\eqref{eq:general-profile-split}, we get
	\begin{align*}
		\limsup_{N\to\infty}\E\Big(\sum_{\mvn}P_{\mvn}w_{\mvn}
		\abs{\cZ_N^{(\mvn)}-\cZ_N^{(\mvn^*)}}\Big)
		\leq C e^{-(\gamma'-\gamma)L^2/2}.
	\end{align*}
	Since $L$ is arbitrary,
	\begin{align}
		\sum_{\mvn}P_{\mvn}w_{\mvn}
		\abs{\cZ_N^{(\mvn)}-\cZ_N^{(\mvn^*)}}
		\to0\quad\text{in }L^1.
		\label{eq:general-profile-weighted}
	\end{align}

	Next we compute the total annealed weight. The weights $w_{\mvn}$ converge to $e^{\alpha\norm{m_{\mvn}}_2^2/2}$ uniformly on bounded windows, and~\eqref{eq:general-profile-tail} controls both $w_{\mvn}$ and $e^{\alpha\norm{m_{\mvn}}_2^2/2}$ outside them. Hence
	\begin{align*}
		\sum_{\mvn}P_{\mvn} \left(w_{\mvn} - e^{\alpha\norm{m_{\mvn}}_2^2/2}\right) \to 0.
	\end{align*}
	Under the uniform measure, $m(\mvgs)\Longrightarrow g_0\sim\N(0,\Pi)$ by the central limit theorem. Lemma~\ref{lem:expmoment} with $W:=\sqrt\gamma\,\phi(\sigma_1)$ gives $\sup_N\E_{\mvgs}\big(e^{\gamma\norm{m}_2^2/2}\big)<\infty$. Since $\gamma>\alpha$, the variables $e^{\alpha\norm m_2^2/2}$ are uniformly integrable (trivially if $\alpha=0$). Therefore
	\begin{align}
		\sum_{\mvn}P_{\mvn}w_{\mvn}
		\to\E\big(e^{\alpha\norm {g_0}_2^2/2}\big)=(1-\alpha)^{-(\gk-1)/2}.
		\label{eq:general-profile-tilt}
	\end{align}
	The last identity holds because $\norm{g_0}_2^2$ has the chi-squared law with $\gk-1$ degrees of freedom. By~\eqref{eq:general-profile-weighted},
	\begin{align*}
		\mathrm{Den}_N-\cZ_N^{(\mvn^*)}\sum_{\mvn}P_{\mvn}w_{\mvn}
		\to0\quad\text{in }L^1.
	\end{align*}
	By~\eqref{eq:general-profile-lognormal}, $\cZ_N^{(\mvn^*)}$ converges in law to an almost surely positive limit, so $1/\cZ_N^{(\mvn^*)}=\cO_{\pr}(1)$. Dividing by $\cZ_N^{(\mvn^*)}$ yields
	\begin{align*}
		\frac{\mathrm{Den}_N}{\cZ_N^{(\mvn^*)}}
		\to(1-\alpha)^{-(\gk-1)/2}\quad\text{in probability}.
	\end{align*}
	In particular,
	\begin{align}
		(\mathrm{Den}_N)^{-1} =\left(\sum_{\mvn}P_{\mvn}w_{\mvn}\cZ_N^{(\mvn)}\right)^{-1}
		=\cO_{\pr}(1).
		\label{eq:general-gibbs-denominator}
	\end{align}
	Now~\eqref{eq:profile-tv-bound}, \eqref{eq:general-profile-weighted} and~\eqref{eq:general-gibbs-denominator} prove~\eqref{eq:profile-total-variation} for the truncated disorder.

	\emph{Free-energy comparison.} The profile decomposition~\eqref{eq:profile-factorization} gives
	\begin{align*}
		\frac{Z_N(\beta;\wt J)}{Z_N^{(\mvn^*)}(\beta;\wt J)}\cdot
		\frac{\abs{\Sigma_N^{(\mvn^*)}}}{\gk^N}
		=\frac{\mathrm{Den}_N}{\cZ_N^{(\mvn^*)}}
		\to(1-\alpha)^{-(\gk-1)/2}\quad\text{in probability}.
	\end{align*}
	Taking logarithms proves~\eqref{eq:transfer-full}. Finally, the coupling~\eqref{eq:truncation-coupling} transfers both assertions from $\wt J$ to the original disorder.
\end{proof}

\begin{proof}[Proof of Theorem~\ref{thm:full_clt}]
	Apply Theorem~\ref{thm:balanced_clt} to $\mvn^*(N)$ and add~\eqref{eq:transfer-full}.
	The profile cardinalities cancel, and $\norm{m_{\mvn^*}}_2^2\to0$.
	Passing to the centering of the statement subtracts
	$\beta^2(\gk-1)(\gk-2)/4=(\gk-1)\alpha/2$.
	Since
	\begin{align*}
		-\frac{\gk-1}{2}\alpha
		-\frac{\gk-1}{2}\ln(1-\alpha)
		=(\gk-1)\varphi(\alpha),
	\end{align*}
	the stated formula follows.
\end{proof}

\subsection{Color fluctuations}
By Proposition~\ref{prop:profile-equiv}, the quenched profile law is asymptotically the deterministic annealed law $\widehat P$, a Curie--Weiss--Potts law. Its magnetization is therefore asymptotically Gaussian, with the covariance $\Pi$ of the uniform measure inflated by the quadratic tilt.

\begin{lem}[Color fluctuations under general disorder]\label{lem:general-color-clt}
	Let $0<\beta<\beta_{\max}(\gk)$ and $g\sim\N(0,\Pi/(1-\alpha))$. For every bounded continuous $f:\dR^{\gk}\to\dR$,
	\begin{align}
		\big\langle f(m)\big\rangle_{\varrho_{N,\beta}}
			\to\E\big(f(g)\big)\quad\text{in probability}.
		\label{eq:general-color-clt}
	\end{align}
	Moreover, there are $C,c>0$ such that, for every $L>0$,
	\begin{align}
		\big\langle\ind\{\norm{m}_2>L\}\big\rangle_{\varrho_{N,\beta}}
			\leq C e^{-cL^2}+o_{\pr}(1).
		\label{eq:general-color-tail}
	\end{align}
\end{lem}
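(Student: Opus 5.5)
The plan is to reduce both assertions to the annealed profile law $\widehat P$ through Proposition~\ref{prop:profile-equiv}. Since $m(\mvgs)=m_{n(\mvgs)}$ depends only on the profile, we have
\begin{align*}
	\big\langle f(m)\big\rangle_{\varrho_{N,\beta}}=\sum_{\mvn}f(m_{\mvn})\big\langle\ind\{n(\mvgs)=\mvn\}\big\rangle_{\varrho_{N,\beta}}.
\end{align*}
By~\eqref{eq:profile-total-variation}, this differs from $\sum_{\mvn}\widehat P_{\mvn}f(m_{\mvn})$ by at most $\norm{f}_\infty\cdot o_{\pr}(1)$. The hypothesis $\beta<\beta_{\max}(\gk)$ gives exactly the two conditions $\beta^2<\alpha_c(\gk)$ and $\alpha<\alpha_c(\gk)$ needed there. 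The problem therefore becomes deterministic: we must show $\sum_{\mvn}\widehat P_{\mvn}f(m_{\mvn})\to\E(f(g))$, and obtain a Gaussian tail bound for $\widehat P$.

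For the deterministic limit, I write
\begin{align*}
	\sum_{\mvn}\widehat P_{\mvn}f(m_{\mvn})=\frac{\sum_{\mvn}P_{\mvn}w_{\mvn}f(m_{\mvn})}{\sum_{\mvn}P_{\mvn}w_{\mvn}}.
\end{align*}
The denominator tends to $(1-\alpha)^{-(\gk-1)/2}$ by~\eqref{eq:general-profile-tilt}. For the numerator I repeat that argument with the bounded factor $f$ inserted:
\begin{itemize}
	\item On a window $\norm{m_{\mvn}}_2\le L$, $w_{\mvn}-e^{\alpha\norm{m_{\mvn}}_2^2/2}\to0$ uniformly.
	\item Outside the window, the tail bound~\eqref{eq:general-profile-tail} controls both $w_{\mvn}$ and $e^{\alpha\norm{m_{\mvn}}_2^2/2}$, uniformly in $N$, by $Ce^{-(\gamma'-\gamma)L^2/2}$.
\end{itemize}
This gives $\sum_{\mvn}P_{\mvn}w_{\mvn}f(m_{\mvn})-\E_{\mvgs}\big(e^{\alpha\norm{m}_2^2/2}f(m)\big)\to0$. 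Under uniform colors, $m\Longrightarrow g_0\sim\N(0,\Pi)$. The weights $e^{\alpha\norm m_2^2/2}$ are uniformly integrable because the $\gamma$-exponential moment from Lemma~\ref{lem:expmoment} is bounded, so the limit is $\E\big(e^{\alpha\norm{g_0}_2^2/2}f(g_0)\big)$. Multiplying the density of $g_0$ on $\cV$ by $e^{\alpha\norm{x}^2/2}$ and normalizing gives the law $\N(0,\Pi/(1-\alpha))$. This is the same computation as in Lemma~\ref{lem:Zsecond}, now on the $(\gk-1)$-dimensional range of $\Pi$. Dividing by the normalizing constant $(1-\alpha)^{-(\gk-1)/2}$ yields $\E(f(g))$ and proves~\eqref{eq:general-color-clt}.

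For the tail bound~\eqref{eq:general-color-tail}, I use~\eqref{eq:profile-total-variation} again, with the indicator $\ind\{\norm{m}_2>L\}$ in place of $f$:
\begin{align*}
	\big\langle\ind\{\norm m_2>L\}\big\rangle_{\varrho_{N,\beta}}\le\widehat P\big(\norm{m_{\mvn}}_2>L\big)+o_{\pr}(1).
\end{align*}
The $o_{\pr}(1)$ term is uniform in $L$, since the total-variation bound does not depend on the test set. Then
\begin{align*}
	\widehat P\big(\norm{m_{\mvn}}_2>L\big)=\frac{\sum_{>L}P_{\mvn}w_{\mvn}}{\sum_{\mvn}P_{\mvn}w_{\mvn}}.
\end{align*}
The numerator is at most $Ce^{-(\gamma'-\gamma)L^2/2}$ for large $N$ by~\eqref{eq:general-profile-tail}. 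The denominator is at least $w_{\mvn^*}P$-mass near the balanced profile, hence bounded below by a positive constant for large $N$; it is in fact at least about $1$, since it converges to $(1-\alpha)^{-(\gk-1)/2}\ge1$. This gives $Ce^{-cL^2}$ with $c=(\gamma'-\gamma)/2$ for all large $N$. The finitely many small $N$ are absorbed into $C$ or into the $o_{\pr}(1)$ term.

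The only delicate point is the uniformity of the estimates: the bound $w_{\mvn}\le Ce^{\gamma\norm{m_{\mvn}}_2^2/2}$ must hold for all profiles at once, including those with $\norm{m_{\mvn}}_2^2$ of order $N$, where the $\cO(\norm{m}^2/N)$ error in $\ln w_{\mvn}$ is not small. This is already established in the proof of Proposition~\ref{prop:profile-equiv}: choose $\gamma>\alpha$ with slack so that the error $C\norm m^2/N$ is absorbed for large $N$. So I expect no real obstacle beyond citing it carefully.
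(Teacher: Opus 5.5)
Your proposal is correct and follows essentially the same route as the paper. You reduce to the annealed profile law $\widehat P$ via~\eqref{eq:profile-total-variation}, replace $w_{\mvn}$ by $e^{\alpha\norm{m_{\mvn}}_2^2/2}$ using the window convergence and the tail bound~\eqref{eq:general-profile-tail}, conclude by uniform integrability and the Gaussian tilt to $\N(0,\Pi/(1-\alpha))$, and bound the tail by the ratio of~\eqref{eq:general-profile-tail} to~\eqref{eq:general-profile-tilt}. The only differences are minor: you apply~\eqref{eq:profile-total-variation} directly to the original disorder, as Proposition~\ref{prop:profile-equiv} states it, instead of working with $\wt J$ and transferring at the end; and you correctly observe that the $o_{\pr}(1)$ term is uniform in $L$.
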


\begin{proof}
	We first work with the truncated disorder. The total variation comparison~\eqref{eq:profile-total-variation} gives
	\begin{align*}
		\big\langle f(m)\big\rangle_{\varrho_{N,\beta}}
		=\sum_{\mvn}\widehat P_{\mvn}f(m_{\mvn})+o_{\pr}(1)
		=\frac{\sum_{\mvn}P_{\mvn}w_{\mvn}f(m_{\mvn})}{\sum_{\mvn}P_{\mvn}w_{\mvn}}+o_{\pr}(1).
	\end{align*}
	Since $f$ is bounded, the tail estimate~\eqref{eq:general-profile-tail} and the uniform convergence $w_{\mvn}-e^{\alpha\norm{m_{\mvn}}_2^2/2}\to0$ on each fixed window imply
	\begin{align*}
		\frac{\sum_{\mvn}P_{\mvn}w_{\mvn}f(m_{\mvn})}{\sum_{\mvn}P_{\mvn}w_{\mvn}}
		=\frac{\sum_{\mvn}P_{\mvn}e^{\alpha\norm{m_{\mvn}}_2^2/2}f(m_{\mvn})}{\sum_{\mvn}P_{\mvn}e^{\alpha\norm{m_{\mvn}}_2^2/2}}+o(1).
	\end{align*}
	Choose $\alpha<\gamma<\alpha_c(\gk)$. Lemma~\ref{lem:expmoment} gives $\sup_N\E_{\mvgs}\big(e^{\gamma\norm m_2^2/2}\big)<\infty$, so $e^{\alpha\norm m_2^2/2}$ is uniformly integrable (trivially when $\alpha=0$). The central limit theorem for $m$ under the uniform measure therefore yields, with $g_0\sim\N(0,\Pi)$,
	\begin{align*}
		\big\langle f(m)\big\rangle_{\varrho_{N,\beta}}
		\to\frac{\E\big(f(g_0)e^{\alpha\norm{g_0}_2^2/2}\big)}{\E\big(e^{\alpha\norm{g_0}_2^2/2}\big)}
		=\E\big(f(g)\big)\quad\text{in probability}.
	\end{align*}
	The last equality holds because the quadratic tilt multiplies the covariance of $g_0$ on the range of $\Pi$ by $(1-\alpha)^{-1}$. This proves~\eqref{eq:general-color-clt}.

	For~\eqref{eq:general-color-tail}, the same total variation comparison gives
	\begin{align*}
		\big\langle\ind\{\norm m_2>L\}\big\rangle_{\varrho_{N,\beta}}
		=\frac{\sum_{\norm{m_{\mvn}}_2>L}P_{\mvn}w_{\mvn}}{\sum_{\mvn}P_{\mvn}w_{\mvn}}+o_{\pr}(1).
	\end{align*}
	The numerator is bounded by $Ce^{-cL^2}$ by~\eqref{eq:general-profile-tail}, and the denominator converges to a positive constant by~\eqref{eq:general-profile-tilt}. Finally, the coupling~\eqref{eq:truncation-coupling} transfers both assertions from $\wt J$ to $J$.
\end{proof}

%%%%%%%%%%%%%%%%%%%%%%%%%%%%%%%%%%%%%%%%%%%%%%%%%%%%%%%%%%%%%%%%%%%%%%%%%%%%%%%
\section{The joint quenched overlap limit}\label{sec:general-overlap}
%%%%%%%%%%%%%%%%%%%%%%%%%%%%%%%%%%%%%%%%%%%%%%%%%%%%%%%%%%%%%%%%%%%%%%%%%%%%%%%
We follow the comparison method of~\cite[Lemma 3.1]{CN95}, with the mixed log-moment kernels of Section~\ref{sec:general-kernels} in place of exact Gaussian identities.

For $T\geq0$, let $\cG_{(T)}$ be the set of continuous functions $F:\dR^{\gk\times\gk}\to\dR$ with $F(x)=o(e^{T\xi(x)})$ as $\norm{x}_F\to\infty$, normed by $\norm{F}_{(T)}:=\sup_x\abs{F(x)}e^{-T\xi(x)}$. For two profiles, let
\begin{align}
	U_N^{(\mvn),(\mvn')}(F)
	:=\E_{\mvgs,\mvgt}^{(\mvn),(\mvn')}
	\Big( e_N(\mvgs) e_N(\mvgt)
		e^{-\Lambda_N(\mvgs,\mvgt)}F\big(\sR(\mvgs,\mvgt)\big)\Big).
	\label{eq:general-UN}
\end{align}
We use the same notation when $F(\sR)$ is replaced by a deterministic function of the pair $(\mvgs,\mvgt)$.

\begin{lem}[Mixed-profile convergence lemma]\label{lem:general-CN-key}
	Fix $L>0$ and $0<\beta^2<T<\alpha_c(\gk)$, and let $K_0\sim\N(0,\Pi\otimes\Pi)$. For every $F\in\cG_{(T)}$,
	\begin{align}
		\sup_{\norm{m_{\mvn}}_2,\norm{m_{\mvn'}}_2\leq L}
		\E\Big(\Big| U_N^{(\mvn),(\mvn')}(F)
			- \cZ_N^{(\mvn)} \cZ_N^{(\mvn')}
				e^{-\varphi_{\gk,J}(\beta^2)}\E\big(F(K_0)\big)\Big|\Big)\to0.
		\label{eq:general-CN-key}
	\end{align}
\end{lem}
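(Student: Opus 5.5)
We follow the Comets--Neveu strategy: reduce to test functions centered under the Gaussian law, and bound the centered error in $L^2$ by a three-replica computation. Write $c_F:=\E(F(K_0))$ and $\bar F:=F-c_F$. Then
\begin{align*}
	U_N^{(\mvn),(\mvn')}(F)=U_N^{(\mvn),(\mvn')}(\bar F)+c_F\,U_N^{(\mvn),(\mvn')}(1).
\end{align*}
The claim therefore splits into two parts. First, $U_N(1)-e^{-\varphi_{\gk,J}(\beta^2)}\cZ_N^{(\mvn)}\cZ_N^{(\mvn')}\to0$ in $L^1$. Second, $U_N(\bar F)\to0$ in $L^1$, both uniformly on the window. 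For the first part, \eqref{eq:general-pair-kernel} gives
\begin{align*}
	e^{-\Lambda_N}=e^{-\beta^2\xi(\sR)-(\varphi_{\gk,J}-\varphi_\gk)(\beta^2)}\big(1+o_{\pr}(1)\big).
\end{align*}
So $U_N(1)$ is, up to the constant, $U_N$ applied to the function $e^{-\beta^2\xi}$, centered around $e^{-\varphi_\gk(\beta^2)}$ by Lemma~\ref{lem:Zsecond}. Hence both parts reduce to one statement: \emph{for every $G$ with $\norm{G}_{(T)}<\infty$ (allowing also negative exponents, or bounded $G$) and $\E(e^{-\beta^2\xi(K_0)}\cdots)$ suitably centered, the $L^2$ norm of $U_N(G)$ tends to zero}. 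More precisely, I would prove that
\begin{align*}
	\E\big(U_N^{(\mvn),(\mvn')}(G)^2\big)\to0\quad\text{whenever}\quad \E\big(G(K_0)\big)=0,
\end{align*}
and handle the kernel remainder $\fR_N^{(2)}$ separately. That remainder is bounded and tends to zero in probability under uniform profiles. Its contribution, by Cauchy--Schwarz against $e_N(\mvgs)e_N(\mvgt)e^{-\Lambda_N}$ (whose disorder expectation is $1$), is $o(1)$ in $L^1$ after averaging configurations.

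\textbf{Second moment.} Expanding $U_N(G)^2$ introduces four configurations $\mvgs,\mvgt$ (profiles $\mvn,\mvn'$) and $\mvgs',\mvgt'$ (same profiles). Taking the disorder expectation gives
\begin{align*}
	\E\big(e_N(\mvgs)e_N(\mvgt)e_N(\mvgs')e_N(\mvgt')\big)e^{-\Lambda_N(\mvgs,\mvgt)-\Lambda_N(\mvgs',\mvgt')}.
\end{align*}
A four-configuration version of Lemma~\ref{lem:general-kernels} is needed here; the one-edge expansion \eqref{eq:general-edge-cumulant} handles four replicas identically. It shows the log of the fourfold moment equals the sum of the six pair kernels plus a remainder tending to zero in probability. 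After cancelling the two "diagonal" pair kernels, this leaves
\begin{align*}
	\E_{\text{4 configs}}\Big(e^{\beta^2\sum_{\text{4 cross pairs}}\xi(\sR)+4(\varphi_{\gk,J}-\varphi_\gk)}\,G(\sR(\mvgs,\mvgt))\,G(\sR(\mvgs',\mvgt'))\,(1+o(1))\Big).
\end{align*}
The four cross pairs are $(\mvgs,\mvgs')$, $(\mvgs,\mvgt')$, $(\mvgt,\mvgs')$ and $(\mvgt,\mvgt')$. By a four-replica extension of Lemma~\ref{lem:profile-clt}, all six overlaps are asymptotically independent $\N(0,\Pi\otimes\Pi)$; the same permutation/conditioning argument extends, conditioning successively. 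The limit of the integrand's expectation then factorizes as
\begin{align*}
	\E\big(G(K_0)\big)^2\,\E\big(e^{\beta^2\xi(K_0)}\big)^4\cdots=0.
\end{align*}
That is the point of centering: the two $G$ factors attach to the overlaps $(\mvgs,\mvgt)$ and $(\mvgs',\mvgt')$, which are independent of each other and of the cross overlaps in the limit.

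\textbf{Main obstacle: uniform integrability.} The integrand contains
\begin{align*}
	e^{\beta^2\sum_{\text{cross}}\xi}\,e^{T\xi(\sR(\mvgs,\mvgt))+T\xi(\sR(\mvgs',\mvgt'))},
\end{align*}
a Gaussian-type exponential of several overlaps simultaneously. Lemma~\ref{lem:profile-moments} only controls one pair. I would use the paper's suggested route: Cauchy--Schwarz reduces to three configurations, and H\"older splits the product of exponentials into single-pair exponential moments with exponent $<\alpha_c(\gk)$. Since $\beta^2<T<\alpha_c$, one can choose H\"older exponents so that each single-pair exponent stays below $\alpha_c$ at least when the count of factors is small. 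If a naive H\"older split overshoots $\alpha_c$, I would instead run the type-class Laplace argument of Lemma~\ref{lem:profile-moments} directly on the joint type of the four configurations. That needs a multi-replica analogue of Lemma~\ref{lem:near-uniform}, which I expect to be the genuinely delicate step. Finally, uniformity over the profile window follows as in Lemma~\ref{lem:profile-moments}(ii) by a subsequence argument.
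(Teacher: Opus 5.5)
Your reduction to centered test functions and your treatment of $\fR_N^{(2)}$ (using that $e_N(\mvgs)e_N(\mvgt)e^{-\Lambda_N(\mvgs,\mvgt)}$ has disorder mean one) are fine. The core step, proving $\E\big(U_N(G)^2\big)\to0$ for centered $G$ by a four-replica expansion, does not go through. The uniform integrability you flag is not merely delicate: it fails on part of the range $\beta^2<\alpha_c(\gk)$.

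After the two diagonal kernels cancel, the integrand carries $e^{\beta^2\xi}$ on each of the four cross pairs, and these form a $4$-cycle. Lemma~\ref{lem:profile-moments} controls only one pair. Splitting the cycle into two matchings by Cauchy--Schwarz already requires $2\beta^2<\alpha_c(\gk)$.

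Worse, your reduction needs $G$ growing like $e^{\beta^2\xi}$, e.g.\ $G=1-e^{\beta^2\xi-\varphi_\gk(\beta^2)}$ for the normalization part, or general $G\in\cG_{(T)}$. Take $\mvn=\mvn'=\mvn^*$ and the event $\{\mvgs=\mvgt=\mvgs'=\mvgt'\}$, which has probability $\gk^{-3N}N^{\cO(1)}$. On it, $\xi(\sR(\mvgs,\mvgs))=(\gk-1)N/2+\cO(1)$ by~\eqref{eq:selfoverlap}. The four cross kernels give $e^{4\beta^2\xi+\cO(1)}$, and $G^2=e^{2\beta^2\xi+\cO(1)}$. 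Hence the absolute expectation of your integrand is at least $\exp\{N[3\beta^2(\gk-1)-3\ln\gk]+\cO(\ln N)\}$. This diverges once $\beta^2>\ln\gk/(\gk-1)$, a value strictly below $\alpha_c(\gk)$ for every $\gk\geq2$.

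Even for bounded $G$ that do not vanish at infinity, the same event forces divergence for $\beta^2>3\ln\gk/(2(\gk-1))$. This is still below $\alpha_c(\gk)$ when $\gk\geq3$. So no multi-replica entropy bound can supply the uniform integrability you need. An $L^2(\pr)$ estimate could at best work for compactly supported $G$, and you would still need a separate density argument.

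The missing idea is to aim only for $L^1$ and to apply Cauchy--Schwarz under the probability measure with density $e_N(\mvgs)$ on the disorder and $\mvgs$. Write $U_N(F)=\E_{\mvgs}^{(\mvn)}\big(e_N(\mvgs)X_{\mvgs}\big)$ with $X_{\mvgs}:=\E_{\mvgt}^{(\mvn')}\big(e_N(\mvgt)e^{-\Lambda_N(\mvgs,\mvgt)}F(\sR(\mvgs,\mvgt))\big)$. Then $\big(\E\abs{U_N(F)}\big)^2\leq\E\big(\E_{\mvgs}^{(\mvn)}(e_N(\mvgs)X_{\mvgs}^2)\big)$. Only three replicas $\mvgs,\mvgt,\mvgt'$ appear. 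By~\eqref{eq:general-triple-kernel}, the normalizations $e^{-\Lambda_N(\mvgs,\mvgt)-\Lambda_N(\mvgs,\mvgt')}$ cancel the two $\mvgs$-pairs exactly. What remains is the single kernel $e^{\Lambda_N(\mvgt,\mvgt')+\fR_N^{(3)}}$, which Lemma~\ref{lem:profile-moments}(i) controls for all $\beta^2<\alpha_c(\gk)$.

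The paper does this only for compactly supported centered $F$, where the limit is a multiple of $\E\big(F(K_0)\big)^2=0$ by the three-replica overlap CLT. It then reaches all of $\cG_{(T)}$ by density, using the trivial bound $\E\abs{U_N(F)}\leq\E_{\mvgs,\mvgt}^{(\mvn),(\mvn')}\abs{F(\sR)}\leq C\norm{F}_{(T)}$. The exponentially growing direction $e^{\beta^2\xi}$ is handled by the exact identity $U_N(e^{\Lambda_N})=\cZ_N^{(\mvn)}\cZ_N^{(\mvn')}$ together with~\eqref{eq:general-pair-kernel}, never by a second-moment bound.
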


\begin{proof}
	Fix profiles in the stated window and abbreviate $U_N:=U_N^{(\mvn),(\mvn')}$.

	\smallskip\noindent\textbf{Step 1 (centered compactly supported functions).} First let $F$ be continuous and compactly supported, with $\E\big(F(K_0)\big)=0$. Write
	\begin{align*}
		U_N(F)=\E_{\mvgs}^{(\mvn)}\big(e_N(\mvgs)X_{\mvgs}\big),\quad
		X_{\mvgs}:=\E_{\mvgt}^{(\mvn')}\Big(e_N(\mvgt)e^{-\Lambda_N(\mvgs,\mvgt)}F\big(\sR(\mvgs,\mvgt)\big)\Big).
	\end{align*}
	Since $\E\big(\E_{\mvgs}^{(\mvn)}(e_N(\mvgs))\big)=1$, the density $e_N(\mvgs)$ defines a probability measure on the disorder and the uniform configuration $\mvgs$. The Cauchy--Schwarz inequality under this measure, followed by~\eqref{eq:general-triple-kernel}, gives
	\begin{align*}
		\Big(\E\big(\abs{U_N(F)}\big)\Big)^2
		&\leq\E\Big(\E_{\mvgs}^{(\mvn)}\big(e_N(\mvgs)X_{\mvgs}^2\big)\Big)\\
		&=\E_{\mvgs,\mvgt,\mvgt'}^{(\mvn),(\mvn'),(\mvn')}
		\Big(e^{\Lambda_N(\mvgt,\mvgt')+\fR_N^{(3)}(\mvgs,\mvgt,\mvgt')}
		F\big(\sR(\mvgs,\mvgt)\big)F\big(\sR(\mvgs,\mvgt')\big)\Big).
	\end{align*}
	Lemma~\ref{lem:general-kernels} shows that the remainders vanish in probability and are globally bounded. Lemma~\ref{lem:profile-clt} makes the three limiting overlaps independent, and Lemma~\ref{lem:profile-moments}(i) with exponent $T>\beta^2$ gives uniform integrability. Thus the last expression converges to
	\begin{align*}
		e^{\varphi_{\gk,J}(\beta^2)-\varphi_{\gk}(\beta^2)}\,
		\E\big(e^{\beta^2\xi(K_0)}\big)\,\E\big(F(K_0)\big)^2=0.
	\end{align*}

	\smallskip\noindent\textbf{Step 2 (boundedness).} The multiplier of $F(\sR)$ in~\eqref{eq:general-UN} has disorder expectation one, by the definition of $\Lambda_N$. Hence
	\begin{align*}
		\E\big(\abs{U_N(F)}\big)
		\leq\E_{\mvgs,\mvgt}^{(\mvn),(\mvn')}\big(\abs{F(\sR)}\big)
		\leq C(T,L)\norm{F}_{(T)},
	\end{align*}
	uniformly over the profile window, by Lemma~\ref{lem:profile-moments}(i) with exponent $T$.

	\smallskip\noindent\textbf{Step 3 (centered functions in $\cG_{(T)}$).} The centered compactly supported functions are dense in $\{F\in\cG_{(T)}:\E(F(K_0))=0\}$ for the norm $\norm{\cdot}_{(T)}$. Indeed, choose a continuous compactly supported $h\geq0$ with $\E\big(h(K_0)\big)=1$, and continuous cutoffs $0\leq\chi_r\leq1$ equal to one on the ball of radius $r$ and to zero outside the ball of radius $2r$. For centered $F$, define $F_r:=F\chi_r-\E\big(F(K_0)\chi_r(K_0)\big)h$. Then $\E\big(F_r(K_0)\big)=0$ and
	\begin{align*}
		\norm{F-F_r}_{(T)}
		\leq\sup_{\norm{x}_F\geq r}\abs{F(x)}e^{-T\xi(x)}
		+\abs{\E\big(F(K_0)\chi_r(K_0)\big)}\,\norm{h}_{(T)}\to0.
	\end{align*}
	The first term vanishes by the defining little-$o$ condition. The second vanishes by dominated convergence and $\E\big(F(K_0)\big)=0$, since $\abs{F(K_0)}\leq\norm{F}_{(T)}e^{T\xi(K_0)}$ is integrable by Lemma~\ref{lem:Zsecond}. Applying Step 1 to $F_r$ and Step 2 to $F-F_r$, and then letting $r\to\infty$, gives $U_N(F)\to0$ in $L^1$.

	\smallskip\noindent\textbf{Step 4 (general functions in $\cG_{(T)}$).} For general $F$, define $F_0:=F-e^{\beta^2\xi-\varphi_\gk(\beta^2)}\E\big(F(K_0)\big)$. Since $T>\beta^2$, we have $F_0\in\cG_{(T)}$, and $\E\big(F_0(K_0)\big)=0$ by Lemma~\ref{lem:Zsecond}. By Step 3 and linearity, it remains to prove~\eqref{eq:general-CN-key} for $F=e^{\beta^2\xi}$.
	The exact identity $U_N(e^{\Lambda_N})=\cZ_N^{(\mvn)}\cZ_N^{(\mvn')}$ and~\eqref{eq:general-pair-kernel} give
	\begin{align*}
		U_N(e^{\beta^2\xi})
		-e^{-\varphi_{\gk,J}(\beta^2)}\cZ_N^{(\mvn)}\cZ_N^{(\mvn')}\E\big(e^{\beta^2\xi(K_0)}\big)
		&=U_N\Big(e^{\beta^2\xi}-e^{\varphi_\gk(\beta^2)-\varphi_{\gk,J}(\beta^2)+\Lambda_N}\Big)\\
		&=U_N\Big(e^{\beta^2\xi}\big(1-e^{\fR_N^{(2)}}\big)\Big),
	\end{align*}
	where the pair arguments of $\Lambda_N$ and $\fR_N^{(2)}$ are suppressed. As in Step 2,
	\begin{align*}
		\E\Big(\Big|U_N\Big(e^{\beta^2\xi}\big(1-e^{\fR_N^{(2)}}\big)\Big)\Big|\Big)
		\leq\E_{\mvgs,\mvgt}^{(\mvn),(\mvn')}\Big(
		e^{\beta^2\xi(\sR)}\,\abs{1-e^{\fR_N^{(2)}(\mvgs,\mvgt)}}\Big)\to0.
	\end{align*}
	Here we used that $\fR_N^{(2)}$ is uniformly bounded and tends to zero in probability (Lemma~\ref{lem:general-kernels}), and that $e^{\beta^2\xi(\sR)}$ is uniformly integrable (Lemma~\ref{lem:profile-moments}(i) with exponent $T>\beta^2$). The convergence is uniform over the window, since all arguments apply along every sequence of profiles.
\end{proof}

\begin{proof}[Proof of Theorem~\ref{thm:C}]
	We first work with the truncated disorder. Let $f$ be a bounded continuous function of the overlap, and apply Lemma~\ref{lem:general-CN-key} to $F(x):=e^{\beta^2\xi(x)}f(x)$, which lies in $\cG_{(T)}$ for every $T\in(\beta^2,\alpha_c(\gk))$. Replacing $e^{\Lambda_N}$ by $e^{\varphi_{\gk,J}(\beta^2)-\varphi_\gk(\beta^2)+\beta^2\xi}$ as in Step 4 of that proof, and using~\eqref{eq:gaussian-tilt}, we obtain, uniformly over any fixed profile window,
	\begin{align}
		&\E_{\mvgs,\mvgt}^{(\mvn),(\mvn')}
			\Big( e_N(\mvgs) e_N(\mvgt)f\big(\sR(\mvgs,\mvgt)\big)\Big)
		- \cZ_N^{(\mvn)} \cZ_N^{(\mvn')}\,\E\big(f(K)\big)
			\to0\quad\text{in }L^1,
		\label{eq:general-overlap-factorization}
	\end{align}
	where $K\sim\N(0,\Pi\otimes\Pi/(1-\beta^2))$ as in Theorem~\ref{thm:C}. The fourth-cumulant factors have canceled.

	\emph{Balanced replicas.} Take both profiles equal to $\mvn^*$ with $\gk\mid N$. By~\eqref{eq:general-profile-lognormal}, we may divide~\eqref{eq:general-overlap-factorization} by $\big( \cZ_N^{\bal}\big)^2$ in probability. Since $c_N$ is constant on a profile class, the ratio is the overlap average under two independent balanced Gibbs replicas. This gives the balanced overlap limit throughout $\beta^2<\alpha_c(\gk)$.

	\emph{Unconstrained replicas.} Let $h_1,h_2$ be bounded continuous functions on $\dR^{\gk}$, and first restrict to $\norm{m_{\mvn}}_2,\norm{m_{\mvn'}}_2\leq L$. Multiply~\eqref{eq:general-overlap-factorization} by $P_{\mvn} P_{\mvn'}w_{\mvn}w_{\mvn'}h_1(m_{\mvn})h_2(m_{\mvn'})$ and sum over the two profiles. The weights $w_{\mvn}$ are bounded on this window, so the uniform $L^1$ convergence controls the summed error. Divide by $\big(\sum_{\mvn} P_{\mvn} w_{\mvn} \cZ_N^{(\mvn)}\big)^2$, whose inverse is bounded in probability by~\eqref{eq:general-gibbs-denominator}. Writing $\chi_L(\mvgs):=\ind\{\norm{m(\mvgs)}_2\leq L\}$, we obtain, for each fixed $L$, in probability,
	\begin{align*}
		&\Big\langle h_1(m(\mvgs^1))h_2(m(\mvgs^2))f\big(\sR(\mvgs^1,\mvgs^2)\big)\chi_L(\mvgs^1)\chi_L(\mvgs^2)\Big\rangle_{\varrho_{N,\beta}^{\otimes2}}\\
		&\quad-\Big\langle h_1(m)\chi_L\Big\rangle_{\varrho_{N,\beta}}
		\Big\langle h_2(m)\chi_L\Big\rangle_{\varrho_{N,\beta}}\,\E\big(f(K)\big)
		\longrightarrow0.
	\end{align*}
	Removing the cutoffs changes both terms by at most a constant times $\langle\ind\{\norm m_2>L\}\rangle_{\varrho_{N,\beta}}$. Letting $L\to\infty$ and using the tail bound~\eqref{eq:general-color-tail}, we obtain, in probability,
	\begin{align*}
		\Big\langle h_1(m(\mvgs^1))h_2(m(\mvgs^2))f\big(\sR(\mvgs^1,\mvgs^2)\big)\Big\rangle_{\varrho_{N,\beta}^{\otimes2}}
		-\Big\langle h_1(m)\Big\rangle_{\varrho_{N,\beta}}
		\Big\langle h_2(m)\Big\rangle_{\varrho_{N,\beta}}\,\E\big(f(K)\big)
		\longrightarrow0.
	\end{align*}
	The color limits~\eqref{eq:general-color-clt} now yield
	\begin{align*}
		\Big\langle h_1(m(\mvgs^1))h_2(m(\mvgs^2))f\big(\sR(\mvgs^1,\mvgs^2)\big)\Big\rangle_{\varrho_{N,\beta}^{\otimes2}}
		\to\E\big(h_1(g)\big)\E\big(h_2(g')\big)\E\big(f(K)\big)\quad\text{in probability},
	\end{align*}
	with $g,g',K$ independent as in Theorem~\ref{thm:C}.

	It remains to pass from products to general $F$. The marginal limits give tightness in probability of the Gibbs law of $(m(\mvgs^1),m(\mvgs^2),\sR(\mvgs^1,\mvgs^2))$. Finite sums of products $h_1(x)h_2(y)f(z)$ form an algebra that separates points and contains the constants. By the Stone--Weierstrass theorem they approximate every continuous function uniformly on compact sets. Together with tightness, the factorization therefore proves the joint weak convergence in probability. The identity $m=\sqrt{\gk N}(p-u)$ also gives the corresponding limit for the color fractions. Finally, the coupling~\eqref{eq:truncation-coupling} transfers all assertions to the original disorder.
\end{proof}

\begin{rem}[Fluctuations of the pair empirical measure]\label{cor:pair-clt-general}
	Under the hypotheses of Theorem~\ref{thm:C},
	\begin{align*}
		\sqrt N\big(p(\mvgs^1,\mvgs^2)-\bar u\big)
		\Longrightarrow \frac1\gk K
			+\frac1{\sqrt{\gk}}
				\big(g u^{\intercal}+u(g')^{\intercal}\big)
	\end{align*}
	weakly in probability, with $g,g',K$ as in Theorem~\ref{thm:C}. The covariance between the entries $(a,b)$ and $(c,d)$ of the limit is
	\begin{align*}
		\frac{\Pi_{ac}\Pi_{bd}}{\gk^2(1-\beta^2)}
			+\frac{\Pi_{ac}+\Pi_{bd}}{\gk^3(1-\alpha)}.
	\end{align*}
	Indeed, the margins of $p(\mvgs^1,\mvgs^2)$ are $p(\mvgs^1)$ and $p(\mvgs^2)$, so expanding $\Pi p(\mvgs^1,\mvgs^2)\Pi$ gives the exact decomposition
	\begin{align*}
		p(\mvgs^1,\mvgs^2)-\bar u
		=\Pi p(\mvgs^1,\mvgs^2)\Pi
		+\big(p(\mvgs^1)-u\big)u^{\intercal}
		+u\big(p(\mvgs^2)-u\big)^{\intercal}.
	\end{align*}
	By~\eqref{eq:pair-profile} and~\eqref{eq:profile-mag}, multiplying by $\sqrt N$ turns the right-hand side into
	$\gk^{-1}\sR(\mvgs^1,\mvgs^2)
		+\gk^{-1/2}\big(m(\mvgs^1)u^{\intercal}+u\,m(\mvgs^2)^{\intercal}\big)$.
	The claim follows from Theorem~\ref{thm:C}. The first term contributes $\Pi_{ac}\Pi_{bd}/(\gk^2(1-\beta^2))$ to the covariance, and the two rank-one terms contribute $\Pi_{ac}/(\gk^3(1-\alpha))$ and $\Pi_{bd}/(\gk^3(1-\alpha))$.
\end{rem}

\appendix

%%%%%%%%%%%%%%%%%%%%%%%%%%%%%%%%%%%%%%%%%%%%%%%%%%%%%%%%%%%%%%%%%%%%%%%%%%%%%%%
\section{Entropy and exponential moments}\label{sec:constants}
%%%%%%%%%%%%%%%%%%%%%%%%%%%%%%%%%%%%%%%%%%%%%%%%%%%%%%%%%%%%%%%%%%%%%%%%%%%%%%%
Both temperature conditions compare a quadratic energy with an entropy. The relevant constant is the optimal sub-Gaussian constant of a bounded field, and this appendix computes it for the Potts field $\phi$.
Lemmas~\ref{lem:expmoment} and~\ref{lem:eta-dual} concern a general bounded mean-zero field on a finite set. The first gives an exponential moment bound for its normalized sums, and the second a dual formula for the optimal constant $\eta_W$ in terms of relative entropy. Lemma~\ref{lem:eta-phi} evaluates the constant $\eta_\phi(\gk)$ of the Potts field. We conclude with the proof of Lemma~\ref{lem:near-uniform}, the entropy bound for pair measures with near-uniform margins used in Section~\ref{sec:profile}.

\subsection{Exponential moments and entropy duality}

\begin{lem}\label{lem:expmoment}
	Let $W, W_1, W_2, \ldots$ be i.i.d.~mean-zero random vectors in $\dR^d$ with $\norm{W}_2 \leq b$ almost surely. Set $S_N := N^{-1/2} \sum_{i=1}^{N} W_i$, and define the \emph{optimal sub-Gaussian constant} of $W$ by
	\begin{align}
		\eta_W := \sup_{s \in \dR^d \setminus \{0\}} \frac{2 \ln \E\big(e^{\langle s, W \rangle}\big)}{\norm{s}_2^2}.
		\label{eq:eta-def}
	\end{align}
	Then $\eta_W \leq b^2$. If $\eta_W < 1$, then for every $N \geq 1$,
	\begin{align*}
		\E\Big( \exp\Big( \frac{1}{2} \norm{S_N}_2^2 \Big)\Big) \leq \left( 1 - \eta_W \right)^{-d/2}.
	\end{align*}
\end{lem}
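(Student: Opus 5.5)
The first claim, $\eta_W\le b^2$, follows from Hoeffding's lemma in each direction $s$. The random variable $\langle s,W\rangle$ is mean-zero and takes values in $[-b\norm{s}_2,\,b\norm{s}_2]$, an interval of length $2b\norm{s}_2$. Hoeffding's lemma therefore gives
\[
\ln\E\big(e^{\langle s,W\rangle}\big)\le\frac{(2b\norm{s}_2)^2}{8}=\frac{b^2\norm{s}_2^2}{2}.
\]
Dividing by $\norm{s}_2^2/2$ and taking the supremum over $s\neq0$ yields $\eta_W\le b^2$.

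For the exponential moment I will use Gaussian linearization (the Hubbard--Stratonovich trick). Let $\zeta\sim\N(0,I_d)$ be independent of the $W_i$. Then for every $x\in\dR^d$,
\[
e^{\norm{x}_2^2/2}=\E_\zeta\big(e^{\langle\zeta,x\rangle}\big).
\]
Apply this with $x=S_N$, use Tonelli (everything is nonnegative) to exchange the expectations, and use independence of the $W_i$. This gives
\[
\E\big(e^{\norm{S_N}_2^2/2}\big)=\E_\zeta\Big(\prod_{i=1}^N\E\big(e^{\langle\zeta,W_i\rangle/\sqrt N}\big)\Big)=\E_\zeta\Big(\E\big(e^{\langle\zeta/\sqrt N,W\rangle}\big)^N\Big).
\]
By the definition \eqref{eq:eta-def}, applied with $s=\zeta/\sqrt N$, each factor is at most $\exp\big(\eta_W\norm{\zeta}_2^2/(2N)\big)$. (When $\zeta=0$ the factor equals $1$, so the bound still holds.) Hence
\[
\E\big(e^{\norm{S_N}_2^2/2}\big)\le\E_\zeta\big(e^{\eta_W\norm{\zeta}_2^2/2}\big).
\]
The variable $\norm{\zeta}_2^2$ is chi-squared with $d$ degrees of freedom. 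Its moment generating function at $\eta_W/2$ equals $(1-\eta_W)^{-d/2}$ exactly when $\eta_W<1$. This gives the bound, uniformly in $N\ge1$.

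No step is a genuine obstacle. The only point needing care is that the bound from \eqref{eq:eta-def} must hold for every $s$, not just asymptotically; it does, because $\eta_W$ is defined as a supremum over all $s\neq0$. Boundedness of $W$ guarantees that all the moment generating functions are finite, which justifies the Tonelli step.
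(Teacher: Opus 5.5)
Your proposal is correct and follows the paper's proof essentially step for step: Hoeffding's lemma in each direction $s$ gives $\eta_W\le b^2$, and then the Hubbard--Stratonovich identity, independence, the bound $\ln\E\big(e^{\langle s,W\rangle}\big)\le\eta_W\norm{s}_2^2/2$ at $s=\zeta/\sqrt N$, and the chi-squared moment generating function give the estimate uniformly in $N$. Your explicit handling of the case $\zeta=0$ and of the Tonelli exchange is a small addition the paper leaves implicit.
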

\begin{proof}
	Write $F(s) := \ln \E\big(e^{\langle s, W \rangle}\big)$. By Hoeffding's lemma~\cite{Hoe63}, every mean-zero random variable $X$ with $\abs{X} \leq M$ almost surely satisfies $\E(e^{X}) \leq e^{M^2/2}$. The variable $X = \langle s, W \rangle$ is mean-zero and satisfies $\abs{X} \leq \norm{s}_2\, b$ by the Cauchy--Schwarz inequality. Hence $F(s) \leq \norm{s}_2^2 b^2/2$ and $\eta_W \leq b^2 < \infty$. By the definition of $\eta_W$,
	\begin{align}
		F(s) \leq \frac{\eta_W}{2} \norm{s}_2^2 \quad \text{for every } s \in \dR^d.
		\label{eq:subgaussian}
	\end{align}
	Now let $\zeta$ be a standard Gaussian vector in $\dR^d$, independent of $(W_i)_{i \geq 1}$. By the Hubbard--Stratonovich identity~\cite{Hub59}, $e^{\norm{x}_2^2/2} = \E_\zeta\big(e^{\langle \zeta, x \rangle}\big)$, and by~\eqref{eq:subgaussian} with $s = \zeta/\sqrt{N}$,
	\begin{align*}
		\E\Big( \exp\Big( \frac{1}{2} \norm{S_N}_2^2 \Big)\Big)
		= \E_\zeta\Big(  \E_W \big( e^{\langle \zeta, S_N \rangle} \big)\Big)
		= \E_\zeta\big(  e^{N F(\zeta/\sqrt N)}\big)
		\leq \E_\zeta\big(  e^{\frac{\eta_W}{2} \norm{\zeta}_2^2}\big)
		= \left( 1 - \eta_W \right)^{-d/2}.
	\end{align*}
	The last equality uses $\eta_W < 1$.
\end{proof}

Relative entropy is the rate function of the empirical measure, and the natural quantity against which $\eta_W$ is measured.

\begin{lem}[Dual description of $\eta_W$]\label{lem:eta-dual}
	Let $W$ be a mean-zero random vector on a finite probability space $(\cX, \pi)$, and let $\cP(\cX)$ be the set of probability measures on $\cX$. Then
	\begin{align*}
		\eta_W = \sup_{q \in\cP(\cX),\, q\neq \pi} \frac{\norm{\E_q(W)}_2^2}{2  D(q  \|  \pi)}.
	\end{align*}
\end{lem}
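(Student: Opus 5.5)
The plan is to use convex duality between the log-moment generating function and relative entropy, namely the Donsker--Varadhan (Gibbs) variational formula
\begin{align*}
	F(s):=\ln\E_\pi\big(e^{\langle s,W\rangle}\big)=\sup_{q\in\cP(\cX)}\Big(\langle s,\E_q(W)\rangle-D(q\|\pi)\Big),
\end{align*}
valid on a finite space, with the supremum attained at the tilted measure $q_s\propto e^{\langle s,W\rangle}\pi$. Denote the right-hand side of the claimed identity by $\eta^*$. I will prove the two inequalities $\eta_W\leq\eta^*$ and $\eta^*\leq\eta_W$ separately. Throughout, note that $q\neq\pi$ may still have $D(q\|\pi)=\infty$ (if $q$ charges a $\pi$-null point); such $q$ contribute $0$, and if $D(q\|\pi)=0$ then $q=\pi$, so the ratio is well defined for $q\neq\pi$.

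For $\eta^*\leq\eta_W$: fix $q\neq\pi$ with $D(q\|\pi)<\infty$ and $v:=\E_q(W)\neq0$ (otherwise there is nothing to prove). The variational formula gives, for every $s$, $\langle s,v\rangle-D(q\|\pi)\leq F(s)\leq\eta_W\norm{s}_2^2/2$. Choosing $s=tv$ with $t>0$ yields $t\norm v_2^2-\eta_W t^2\norm v_2^2/2\leq D(q\|\pi)$. If $\eta_W>0$, optimize at $t=1/\eta_W$ to get $\norm v_2^2\leq 2\eta_W D(q\|\pi)$. (If $\eta_W=0$, letting $t\to\infty$ forces $v=0$, consistent with the claim.) Taking the supremum over $q$ gives $\eta^*\leq\eta_W$.

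For $\eta_W\leq\eta^*$: by definition of $\eta^*$, $\norm{\E_q(W)}_2^2\leq 2\eta^* D(q\|\pi)$ for all $q$ (trivially for $q=\pi$ since $\E_\pi W=0$). Then for each $s$,
\begin{align*}
	F(s)=\sup_q\Big(\langle s,\E_q(W)\rangle-D(q\|\pi)\Big)\leq\sup_q\Big(\norm s_2\norm{\E_q(W)}_2-\frac{\norm{\E_q(W)}_2^2}{2\eta^*}\Big)\leq\frac{\eta^*}{2}\norm s_2^2,
\end{align*}
by maximizing $ax-x^2/(2\eta^*)$ over $x\geq0$ (if $\eta^*=0$ then $\E_q W=0$ for all $q$ with finite entropy and $F\equiv0$). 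Hence $2F(s)/\norm s_2^2\leq\eta^*$, i.e. $\eta_W\leq\eta^*$. Finiteness of $\eta^*$ follows from the first direction and Lemma~\ref{lem:expmoment} ($\eta_W\leq b^2$), so no $\infty$ arithmetic issues arise.

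The only nontrivial ingredient is the variational formula itself; I would justify it directly: for $q\ll\pi$, $\langle s,\E_qW\rangle-D(q\|\pi)-F(s)=-D(q\|q_s)\leq0$, with equality at $q=q_s$, which is an elementary computation on a finite set. I expect no serious obstacle; the main care point is handling degenerate cases ($D=\infty$, $\eta=0$, $\E_qW=0$) cleanly.
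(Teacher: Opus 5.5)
Your proof is correct and is essentially the paper's argument: both apply the Gibbs variational formula to $f=\langle s,W\rangle$ and then optimize the resulting quadratic in $s$. The paper packages your two inequalities as the single equivalence $\eta_W\le c\iff\norm{\E_q(W)}_2^2\le 2c\,D(q\|\pi)$ for all $q$, valid for every $c>0$, while you additionally verify the variational formula via $-D(q\|q_s)$ and handle the degenerate cases explicitly.
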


\begin{proof}
	The Gibbs variational principle on the finite space $(\cX,\pi)$ states that, for every function $f$,
	\begin{align*}
		\ln \E_\pi\big(e^{f}\big) = \sup_{q\in\cP(\cX)} \big[ \E_q(f) - D(q\|\pi) \big],
	\end{align*}
	see~\cite{DZ98}. Apply it to $f = \langle s, W\rangle$. By the definition~\eqref{eq:eta-def}, for every $c>0$,
	\begin{align*}
		\eta_W \leq c
		\iff \forall q, s : \langle s, \E_q(W) \rangle - \frac{c}{2}\norm{s}_2^2 \leq D(q\|\pi)
		\iff \forall q : \frac{\norm{\E_q(W)}_2^2}{2c} \leq D(q\|\pi).
	\end{align*}
	The last step maximizes the quadratic function of $s$. Since this equivalence holds for every $c > 0$, the claim follows.
\end{proof}

\begin{rem}\label{rem:expmoment-opt}
	(a) \emph{The condition $\eta_W<1$ in Lemma~\ref{lem:expmoment} is sharp.} Write $F(s):=\ln\E\big(e^{\langle s,W\rangle}\big)$ and let $\zeta\sim\N(0,I_d)$. The proof of Lemma~\ref{lem:expmoment} gives the exact identity
	\begin{align*}
		\E \Big(\exp\big(\tfrac12 \norm{S_N}_2^2\big)\Big)=\E_\zeta\big(e^{N F(\zeta/\sqrt N)}\big).
	\end{align*}
	Suppose that $2F(s_0) > \norm{s_0}_2^2$ for some $s_0$. Restricting the Gaussian integral to the unit ball around $\sqrt N s_0$ and using the local Lipschitz continuity of $F$ gives
	\begin{align*}
		\E \Big(\exp \big( \tfrac12 \norm{S_N}_2^2 \big)\Big) \geq c_d\,  e^{N \left( F(s_0) - \norm{s_0}_2^2/2 \right) - \cO(\sqrt N)} \to \infty.
	\end{align*}
	(b) For a Rademacher variable ($d = 1$, $W = \pm 1$), $F(s) = \ln\cosh(s) \leq s^2/2$ and $\eta_W = 1$. In this case~\eqref{eq:subgaussian} is the classical inequality behind the second-moment computation of~\cite{ALR87} for the SK model. Lemma~\ref{lem:eta-phi} below computes the corresponding constant for the $\gk$-state field $\phi$.
\end{rem}

\subsection{Potts constants and near-uniform margins}

We now turn to the field $\phi$. Recall that $\phi(x)=\sqrt\gk(e_x-u)$ and let $\sigma$ be uniform on $[\gk]$. Its covariance is $\E_{\sigma}\big(\phi(\sigma)\phi(\sigma)^{\intercal}\big)=\Pi$, the orthogonal projection onto $\cV$ defined in~\eqref{def:phi}.

The next lemma computes $\eta_\phi(\gk)$. The extremal direction is ``one color against the rest''. Its tilted law $p^*$ is the ordered state of the Curie--Weiss--Potts model at its critical temperature~\cite{EW90}, which explains the appearance of the Curie--Weiss--Potts critical value.
The equivalent entropy inequality for $\gk\geq3$ follows from~\cite[Proposition 10 and Section 5]{AN05}. We include a proof in the present sub-Gaussian formulation, which also identifies a maximizing tilt.

\begin{lem}[The single-field constant]\label{lem:eta-phi}
	Let $\Lambda(t) := \ln \E_\sigma\big(  e^{\langle t, \phi(\sigma) \rangle}\big)$ for $t \in \dR^\gk$. Then
	\begin{align*}
		\sup_{t \neq 0} \frac{2 \Lambda(t)}{\norm{t}_2^2} = \eta_\phi(\gk) =
		\begin{cases}
			1,                                                  & \gk = 2,    \\[2pt]
			\dfrac{\gk(\gk-2)}{2(\gk-1)\ln(\gk-1)}, & \gk \geq 3.
		\end{cases}
	\end{align*}
	For $\gk \geq 3$ we have $\eta_\phi(\gk)>1$, and the supremum is attained at the $\gk$ points $t = \tfrac{2\ln(\gk-1)}{\gk}\,  \phi(x_0)$, $x_0 \in [\gk]$. The corresponding tilted law is, up to a permutation of colors, the ``one color against the rest'' profile
	\begin{align*}
		p^* = \Big( \frac{\gk-1}{\gk}, \frac{1}{\gk(\gk-1)}, \ldots, \frac{1}{\gk(\gk-1)} \Big).
	\end{align*}
	Equivalently, by Lemma~\ref{lem:eta-dual},
	\begin{align*}
		\frac{1}{\eta_\phi(\gk)} = \inf_{p \neq u} \frac{2 D(p  \|  u)}{\gk \norm{p - u}_2^2},
	\end{align*}
	where the infimum runs over probability vectors $p$ on $[\gk]$.
\end{lem}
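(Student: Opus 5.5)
The plan is to compute $\sup_{t\ne0}2\Lambda(t)/\norm t_2^2$ directly, first reducing to $t\in\cV$, then to a two-valued direction, and finally to a one-variable calculus problem. Since $\sum_x\phi(x)=0$ and $\langle\vone,\phi(x)\rangle=0$, adding a multiple of $\vone$ to $t$ leaves $\Lambda$ unchanged and only increases $\norm t_2$. So we may assume $t\in\cV$. Writing $s_x:=\sqrt\gk\,t_x$, which satisfies $\sum_xs_x=0$, we get
\[
\Lambda(t)=\ln\Big(\frac1\gk\sum_xe^{s_x}\Big)
\quad\text{and}\quad
\norm t_2^2=\frac1\gk\sum_xs_x^2 .
\]
The problem therefore becomes: find the best constant $c$ with $\ln\big(\gk^{-1}\sum_xe^{s_x}\big)\le\frac c2\cdot\gk^{-1}\sum_xs_x^2$ for all centered $s$. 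For $\gk=2$ this is $\ln\cosh(s)\le s^2/2$, with ratio tending to $1$ as $s\to0$, so $\eta_\phi(2)=1$.

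For $\gk\ge3$ I would pass to the dual form of Lemma~\ref{lem:eta-dual}. Since $\E_p\phi=\sqrt\gk(p-u)$, the claim is equivalent to
\[
D(p\|u)\ \ge\ \frac{\gk}{2\eta_\phi}\norm{p-u}_2^2
\qquad\text{for all } p .
\]
Lagrange conditions at an interior critical point of the ratio $D(p\|u)/\norm{p-u}_2^2$ take the form $\ln(\gk p_x)+\mathrm{const}=\lambda p_x$. The map $y\mapsto\ln y-\lambda y$ is strictly concave, so each value is attained at most twice. Hence every extremal $p$ takes only two distinct values, say $a$ on $k$ colors and $b$ on $\gk-k$ colors.

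This leaves a two-parameter family (in $k$ and the split), and I expect it to be the main obstacle. One must show that $k=1$ with the "one color against the rest" profile is the global minimizer of the ratio, and must also handle the boundary, where some $p_x=0$. I would parametrize by $k$ and the excess mass, and reduce the problem in $k$ by comparing $k$ with $\gk-k$. Failing a clean argument, I would invoke the entropy inequality of \cite[Proposition 10]{AN05}, as the text allows. For $k=1$ I take $p=(a,\frac{1-a}{\gk-1},\dots)$ and minimize the resulting scalar ratio in $a$. The minimizer should be $a=(\gk-1)/\gk$, which gives $p^*$, where one checks $D(p^*\|u)=\frac{\gk-2}{\gk}\ln(\gk-1)$ and $\norm{p^*-u}_2^2=\frac{(\gk-2)^2}{\gk^2(\gk-1)}$. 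The ratio $\gk\norm{p^*-u}_2^2/(2D)$ then equals $\frac{\gk(\gk-2)}{2(\gk-1)\ln(\gk-1)}$.

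It remains to identify the maximizing tilt and to check that $\eta_\phi>1$. The tilt comes from Gibbs duality. The optimal $t$ is proportional to $\E_{p^*}\phi$, and it satisfies $p^*_x\propto e^{\langle t,\phi(x)\rangle}$. Solving gives $t=\frac{2\ln(\gk-1)}{\gk}\phi(x_0)$, and by symmetry this holds for each $x_0$. Finally, $\eta_\phi(\gk)>1$ is the elementary inequality $\gk(\gk-2)>2(\gk-1)\ln(\gk-1)$ for $\gk\ge3$. It can be checked via $\ln y<(y^2-1)/(2y)$ with $y=\gk-1>1$, since this bound gives $2(\gk-1)\ln(\gk-1)<(\gk-1)^2-1=\gk(\gk-2)$.
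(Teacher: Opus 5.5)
Your preliminary steps are sound: the reduction to $t\in\cV$, the case $\gk=2$, the passage to the entropy ratio via Lemma~\ref{lem:eta-dual}, and the Lagrange argument that an interior extremal $p$ takes at most two values. The last is the dual counterpart of the paper's condition $e^{c_x}=Ac_x+B$ on $\cV$. The tilt identification and the proof of $\eta_\phi(\gk)>1$ are also correct, once $p^*$ is known to be optimal.

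The gap is that the step you call the main obstacle is the actual content of the lemma, and you do not prove it. Comparing $k$ with $\gk-k$ only gives the symmetry $k\leftrightarrow\gk-k$, not that $k=1$ is optimal. Even for $k=1$, minimizing the scalar ratio at $a=(\gk-1)/\gk$ is not routine calculus. It is the dual form of the Kearns--Saul inequality for a Bernoulli$(1/\gk)$ variable. One easily checks that $a=1-1/\gk$ is a critical point, but global optimality is exactly what \cite{KS98,BK13,BM13} establish, and the paper cites this too. Falling back on \cite[Proposition 10]{AN05} cites the whole inequality, which makes your Lagrange analysis superfluous rather than completing it.

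The missing idea is lumping, which handles $k$ and the split at once. If $p$ equals $a$ on $k$ colors and $b$ on the rest, set $\rho:=k/\gk$ and $r:=ka$. Then $D(p\|u)=D(\mathrm{Bern}(r)\|\mathrm{Bern}(\rho))$ and $\norm{p-u}_2^2=(r-\rho)^2/(\gk\rho(1-\rho))$, so
\[
\frac{2D(p\|u)}{\gk\norm{p-u}_2^2}=\frac{2\rho(1-\rho)\,D(\mathrm{Bern}(r)\|\mathrm{Bern}(\rho))}{(r-\rho)^2}.
\]
By Kearns--Saul, its infimum over $r$ is $h(\rho)=2\rho(1-\rho)\ln\frac{1-\rho}{\rho}/(1-2\rho)$, attained at $r=1-\rho$ for $\rho\neq\tfrac12$. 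It remains to minimize $h(k/\gk)$ over $1\le k\le\gk-1$. Here $h$ is symmetric about $\tfrac12$ and increasing on $(0,\tfrac12]$. With $w=1-2\rho$ one has $h=(1-w^2)\tanh^{-1}(w)/w$, and $dh/dw<0$ because $(1+w^2)\tanh^{-1}(w)>w$. This is the paper's route, carried out on the primal side for $Q(c)$ on $\cV$.

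Two smaller points. First, you must show that the minimum is attained at an interior point away from $u$. It is away from $u$ because the ratio tends to $1$ near $u$, while its value at $p^*$ is $2(\gk-1)\ln(\gk-1)/(\gk(\gk-2))<1$. It is interior because moving mass $\varepsilon$ into an empty color lowers $D$ by order $\varepsilon\ln(1/\varepsilon)$ but changes $\norm{p-u}_2^2$ only by $\cO(\varepsilon)$. Second, $\norm{p^*-u}_2^2=(\gk-2)^2/(\gk(\gk-1))$, not $(\gk-2)^2/(\gk^2(\gk-1))$. Your value would put the ratio off by a factor $\gk$, although your final expression for $\eta_\phi(\gk)$ is correct.
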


\begin{proof}
	\emph{Case $\gk = 2$.} Here $\phi(\sigma) = \varepsilon (1,-1)/\sqrt 2$ with $\varepsilon$ a Rademacher variable. Thus $\Lambda(t) = \ln\cosh\big( (t_1 - t_2)/\sqrt 2 \big) \leq (t_1-t_2)^2/4 \leq \norm{t}_2^2/2$, and the ratio tends to $1$ along $t = (a, -a)$ as $a \to 0$. Hence the supremum equals $1$. Assume $\gk \geq 3$ from now on.

	\smallskip\noindent\textbf{Step 1 (reduction to centered vectors).} We have $\langle t, \phi(\sigma) \rangle = \sqrt\gk\,  (t_\sigma - \bar t)$ with $\bar t := \gk^{-1}\sum_x t_x$. So $\Lambda$ depends only on the centered part $\hat t := t - \bar t\,  \vone $, while $\norm{\hat t}_2 \leq \norm{t}_2$. We may therefore restrict to centered $t$. Writing $c := \sqrt\gk\,  \hat t \in \cV$, the claim becomes
	\begin{align*}
		\sup_{c \in \cV \setminus \{0\}} Q(c) = \eta_\phi(\gk), \quad Q(c) := \frac{2\gk  L(c)}{\norm{c}_2^2}, \quad L(c) := \ln\Big( \frac1\gk \sum_{x} e^{c_x} \Big).
	\end{align*}

	\smallskip\noindent\textbf{Step 2 (a binary sub-problem).} For $\rho \in (0,1)$ let $\Lambda_\rho(s) := \ln\big( \rho  e^{(1-\rho)s} + (1-\rho)  e^{-\rho s} \big)$ be the cumulant generating function of a centered Bernoulli$(\rho)$ variable, and define
	\begin{align*}
		h(\rho) := \frac{2\rho(1-\rho)\ln\frac{1-\rho}{\rho}}{1 - 2\rho} \quad (\rho \neq \tfrac12), \quad h(\tfrac12) := 1.
	\end{align*}
	The optimal sub-Gaussian constant of such a variable is known exactly:
	\begin{align}
		\sup_{s \neq 0} \frac{2 \Lambda_\rho(s)}{\rho(1-\rho) s^2} = \frac{1}{h(\rho)}, \quad \text{attained at } s^* = 2\ln\tfrac{1-\rho}{\rho} \text{ for } \rho \neq \tfrac12.
		\label{eq:binary-sup}
	\end{align}
	The inequality $\Lambda_\rho(s) \leq \tfrac{\rho(1-\rho)}{2h(\rho)}s^2$, with the constant $\rho(1-\rho)/h(\rho) = (1-2\rho)/\big(2\ln\tfrac{1-\rho}\rho\big)$, is the Kearns--Saul inequality~\cite{KS98}. See~\cite{BK13,BM13} for complete proofs and the optimality of the constant. At $\rho = \tfrac12$ it is the classical bound $\ln\cosh(s/2) \leq s^2/8$. That the supremum is attained at $s^*$ follows by substitution. Indeed, $e^{s^*} = \big(\tfrac{1-\rho}\rho\big)^2$ gives $\rho e^{s^*} + 1 - \rho = \tfrac{1-\rho}\rho$, so that
	\begin{align*}
		\Lambda_\rho(s^*) = -\rho s^* + \ln\tfrac{1-\rho}{\rho} = \tfrac{1-2\rho}{2}\,  s^* = \tfrac{\rho(1-\rho)}{2h(\rho)} (s^*)^2,
	\end{align*}
	where the last step uses the definition of $h$. For $\rho = \tfrac12$ the supremum is only approached as $s \to 0$.

	\smallskip\noindent\textbf{Step 3 (two-valued reduction).} We first record the lower bound. Let $c^* := 2\ln(\gk-1) \big( e_1 - \tfrac1\gk \vone  \big)$. It takes one value on a single coordinate and another value on the remaining $\gk - 1$ coordinates. The computation of Step 4 below with $j = 1$ and the equality case of~\eqref{eq:binary-sup} give $Q(c^*) = 1/h(1/\gk) = \eta_\phi(\gk)$. Moreover $\eta_\phi(\gk) > 1$ for $\gk \geq 3$. With $x := \gk - 1 > 1$ this reads $x^2 - 1 > 2x\ln x$, that is, $x - 1/x - 2\ln x > 0$. This holds since the left side vanishes at $x = 1$ and has derivative $(1 - 1/x)^2 > 0$.

	As $c \to 0$ in $\cV$, we have $L(c) = \norm{c}_2^2/(2\gk) + \cO(\norm{c}_2^3)$ because $\sum_x c_x = 0$, so $Q(c) \to 1$. Also $L(c) \leq \max_x c_x \leq \norm{c}_2$, so $Q(c) \to 0$ as $\norm{c}_2 \to \infty$. Since $\sup Q \geq \eta_\phi(\gk) > 1$, every maximizing sequence stays in a compact subset of $\cV \setminus \{0\}$. Hence the supremum is attained at some $c^0 \neq 0$, which is a critical point of $Q$ on $\cV$. Let $\nabla_{\cV}=\Pi \nabla$ denote the gradient along $\cV$. Since $\partial_x L(c) = e^{c_x}/\sum_y e^{c_y}$ and these $\gk$ numbers sum to one,
	\begin{align*}
		\big( \nabla_{\cV} L(c) \big)_x = \frac{e^{c_x}}{\sum_y e^{c_y}} - \frac1\gk.
	\end{align*}
	The criticality condition for $Q$ reads $\nabla_{\cV} L(c^0) = 2\big( L(c^0)/\norm{c^0}_2^2 \big)\,  c^0$. Multiplying its $x$-th component by $\sum_y e^{c^0_y}$ gives
	\begin{align*}
		e^{c^0_x} = A  c^0_x + B \quad \text{for every } x \in [\gk],
	\end{align*}
	where $A$ and $B$ depend on $c^0$ but not on $x$.
	By the strict convexity of the exponential, this equation has at most two solutions. Hence $c^0$ takes exactly two distinct values, since a single value would force $c^0 = 0$.

	\smallskip\noindent\textbf{Step 4 (lumping and optimization over $j$).} Let $c \in \cV$ take the value $a$ on $j$ coordinates and $b$ on the remaining $\gk - j$ coordinates, with $1 \leq j \leq \gk - 1$ and $j a + (\gk - j) b = 0$. Set $\rho := j/\gk$ and $s := a - b$, so that $a = (1-\rho)s$ and $b = -\rho s$. Then
	\begin{align*}
		L(c) = \ln\big( \rho e^{a} + (1-\rho) e^{b} \big) = \Lambda_\rho(s), \quad \norm{c}_2^2 = \gk\big( \rho a^2 + (1-\rho) b^2 \big) = \gk  \rho(1-\rho)  s^2,
	\end{align*}
	whence $Q(c) = 2\Lambda_\rho(s) / (\rho(1-\rho) s^2)$. By~\eqref{eq:binary-sup}, $\sup Q = \max_{1 \leq j \leq \gk-1} 1/h(j/\gk)$. Finally, $h$ is symmetric about $\tfrac12$ and strictly increasing on $(0, \tfrac12]$. Indeed, the substitution $w := 1 - 2\rho \in [0, 1)$ gives $h = (1-w^2)  \tanh^{-1}(w)/w$, and
	\begin{align*}
		\frac{d h}{d w} = \frac1w - \frac{(1+w^2)\tanh^{-1}(w)}{w^2} < 0
		\iff (1+w^2)\tanh^{-1}(w) > w,
	\end{align*}
	which holds for $w \in (0,1)$ since $\tanh^{-1}(w) > w$. Hence the maximum over $j$ is attained at $j = 1$ (equivalently $j = \gk - 1$), and
	\begin{align*}
		\sup_{c} Q(c) = \frac{1}{h(1/\gk)} = \frac{\gk(\gk-2)}{2(\gk-1)\ln(\gk-1)} = \eta_\phi(\gk).
	\end{align*}
	The maximizer is the two-valued vector with $j = 1$ and gap $s^* = 2\ln(\gk-1)$, namely $c^* = 2\ln(\gk-1)(e_{x_0} - \tfrac1\gk\vone )$. It corresponds to $t = c^*/\sqrt\gk = \tfrac{2\ln(\gk-1)}{\gk}\,  \phi(x_0)$. Its tilted law is proportional to $(e^{c^*_x})_x$, which is exactly $p^*$. Finally, the dual formula is Lemma~\ref{lem:eta-dual} applied to $\phi$ on $([\gk], u)$, since $\E_p(\phi) = \sqrt\gk\,  (p - u)$ and hence $\norm{\E_p(\phi)}_2^2 = \gk \norm{p-u}_2^2$.
\end{proof}

The dual formula in Lemma~\ref{lem:eta-phi} gives, for every probability vector $p$ on $[\gk]$,
\begin{align}
	2D(p\|u)\geq\gk\alpha_c(\gk)\norm{p-u}_2^2.
	\label{eq:row-dual}
\end{align}
Applying this inequality to the conditional rows of a pair measure gives the entropy bound of Lemma~\ref{lem:near-uniform}.

\begin{proof}[Proof of Lemma~\ref{lem:near-uniform}]
	The idea is to apply~\eqref{eq:row-dual} to each conditional row and to average, using the chain rule for relative entropy. The error terms come from replacing $u$ by $p'$ and $1/p_x$ by $\gk$. Write $p^{(x)} := q_{x\cdot}/p_x \in \cP([\gk])$ for the $x$-th conditional row of $q$. Since $\gk u_y = 1$,
	\begin{align*}
		D(p^{(x)}\|p') - D(p^{(x)}\|u) = - \sum_y p^{(x)}_y \ln(\gk p'_y) = - \gk \big\langle p^{(x)}, p'-u \big\rangle - \sum_y p^{(x)}_y \Big( \ln(\gk p'_y) - \gk (p'_y - u_y) \Big).
	\end{align*}
	For the first term, the Cauchy--Schwarz inequality and $\langle u, p'-u\rangle = 0$ give
	\begin{align*}
		\abs{\big\langle p^{(x)}, p'-u \big\rangle}
		 & = \abs{\big\langle p^{(x)} - p',  p'-u \big\rangle + \big\langle p',  p'-u \big\rangle} \\
		 & \leq \norm{p^{(x)} - p'}_2 \norm{p' - u}_2 + \norm{p' - u}_2^2
		\leq C\delta  \norm{p^{(x)} - p'}_2 + C\delta^2.
	\end{align*}
	For the second term, note that $\ln(\gk p'_y) = \ln\big(1 + \gk(p'_y - u_y)\big)$ and $\abs{\gk(p'_y-u_y)} \leq \gk\delta \leq \tfrac12$. Hence $\abs{\ln(\gk p'_y) - \gk(p'_y - u_y)} \leq C\delta^2$, and $\sum_y p^{(x)}_y = 1$.
	We combine these bounds with~\eqref{eq:row-dual} and with $\norm{p^{(x)}-u}_2^2 \geq \norm{p^{(x)}-p'}_2^2 - 2\sqrt\gk\, \delta\norm{p^{(x)}-p'}_2$, which follows from $\norm{p'-u}_2\leq\sqrt\gk\,\delta$. This gives
	\begin{align*}
		D(p^{(x)}\|p')  \geq \frac{\alpha_c\gk}{2}  \norm{p^{(x)}-p'}_2^2 - C\delta  \norm{p^{(x)} - p'}_2 - C\delta^2.
	\end{align*}

	Now average over rows. By the chain rule, $D(q\|p\otimes p') = \sum_x p_x  D(p^{(x)}\|p')$. Since $p^{(x)}_y - p'_y = (q_{xy} - p_x p'_y)/p_x$,
	\begin{align*}
		\sum_x p_x \norm{p^{(x)} - p'}_2^2 = \sum_{x,y} \frac{w_{xy}^2}{p_x}.
	\end{align*}
	The margin bound $\gk^{-1}(1 - \gk\delta) \leq p_x \leq \gk^{-1}(1 + \gk\delta)$ bounds this quantity below by $\gk(1-\gk\delta)\norm w_F^2$ and above by $C\norm w_F^2$.
	The lower bound produces the error $\alpha_c\gk^3\delta\norm w_F^2/2$, which is at most $C\delta\norm w_F$ because $\norm w_F\leq\norm q_F+\norm{p\otimes p'}_F\leq2$. The upper bound and the Cauchy--Schwarz inequality in $x$ bound the averaged linear term by $C\delta\norm w_F$. Together these estimates give~\eqref{eq:near-uniform}.
\end{proof}

\section*{Use of large language models}
The authors proved Theorem~\ref{thm:full_clt} on a smaller range of $\beta$ using the generalized $\mathrm{Ch}_\gk,\mathrm{Th}_\gk$ expansion (see Remark~\ref{rem:loop-motivation}) and planned to treat the Gaussian case for $\beta<\beta_{\max}(\gk)$ using the stochastic-calculus method of~\cite{CN95}. They provided Claude with this plan and a sketch of the proof, to generate a draft. This draft was subsequently used in ChatGPT to simplify the proof for general symmetric disorder with unit variance and finite fourth moment. The authors have written and verified every statement and proof in this article, and take full responsibility for its correctness.

\bibliographystyle{abbrv}
\bibliography{Potts}
\end{document}